      \string\usetikzlibrary{decorations.markings} to use arrows with markings}{}}{}%
\newcommand{\mapsfrom}{\mathrel{\reflectbox{\ensuremath{\mapsto}}}}
\newcommand{\id}{\mathrm{id}}
\newcommand{\m}{\mathfrak{m}}
\newcommand{\p}{\mathfrak{p}}
\newcommand{\Z}{\mathds Z}
\newcommand{\N}{\mathds N}
\newcommand{\et}{\textit{\'et}}
\newcommand{\A}{{\mathbb A}}
\newcommand{\Shv}{\mathit{Shv}}
\newcommand{\disc}{\mathit{disc}}
\newcommand{\cF}{{\mathscr F}}
\newcommand{\cG}{{\mathscr G}}
\newcommand{\cO}{{\mathscr O}}
\newcommand{\cS}{{\mathscr S}}
\newcommand{\cU}{{\mathscr U}}
\newcommand{\cX}{{\mathscr X}}
\newcommand{\cY}{{\mathscr Y}}
\newcommand{\cZ}{{\mathscr Z}}
\newcommand{\liso}{\mathrel{\hbox{$\longrightarrow$} \kern-2.4ex\lower-1ex\hbox{$\scriptstyle\sim$}\kern1.7ex}}
\newcommand{\set}{\textit{s\'et}}
\newcommand{\RZ}{\mathrm{RZ}}
\newcommand{\triv}{\mathrm{triv}}
\newcommand{\zerounderset}[3][\mathord]{%
  #1{\vtop{
    \let\\\cr
    \baselineskip\z@skip\lineskip.25ex
    \ialign{\hidewidth$##$\hidewidth\crcr
      \omit$#3$\cr
      #2\crcr
    }%
  }}%
}
\newtheoremstyle{alexthm}
  {}
  {}
  {\sl }
  {}
  {\bf}
  {.}
  {.5em}
  {}
\theoremstyle{alexthm}
\newtheorem{theorem}{Theorem}[section]
\newtheorem*{theorem*}{Theorem}
\newtheorem{corollary}[theorem]{Corollary}
\newtheorem{proposition}[theorem]{Proposition}
\newtheorem{lemma}[theorem]{Lemma}
\newtheorem*{lemma*}{Lemma}
\newtheoremstyle{alexdef}
  {}
  {}
  {\rm }
  {}
  {\bf}
  {.}
  {.5em}
  {}
\theoremstyle{alexdef}
\newtheorem*{example*}{Example}
\newtheorem{example}[theorem]{Example}
\newtheorem{remark}[theorem]{Remark}
\newtheorem{definition}[theorem]{Definition}
\DeclareMathOperator{\Spec}{\mathrm{Spec}}
\DeclareMathOperator{\Spv}{\mathrm{Spv}}
\DeclareMathOperator{\Spa}{\mathrm{Spa}}
\DeclareMathOperator{\supp}{\mathrm{supp}}
\DeclareMathOperator{\ch}{char}
\DeclareMathOperator*{\colim}{colim}
\definecolor{darklimegreen}{RGB}{31,142,8}
\begin{document}

\hfuzz=4pt
\title{Tame proper base change for discretely ringed adic spaces}
\author{Katharina H\"{u}bner}
\email{huebner@math.uni-frankfurt.de}
\date{\today}
\address{Institut für Mathematik, Goethe Univeristät Frankfurt}
\thanks{This research is partly supported by ERC Consolidator Grant 770922 - BirNonArchGeom.
 Moreover the author acknowledges support by Deutsche Forschungsgemeinschaft  (DFG) through the Collaborative Research Centre TRR 326 "Geometry and Arithmetic of Uniformized Structures", project number 444845124.}

\begin{abstract}
 We consider a proper morphism $X \to S$ and a locally closed immersion $S' \to S$ of discretely ringed adic spaces and prove proper base change for the tame topology in this setting.
 More precisely, we show for an abelian $p$-torsion sheaf ($p = \ch^+(S)$) on the tame site of~$X$ that the base change homomorphism for the derived pushforward along $X \to S$ with the pullback along~$S' \to S$ is an isomorphism.
\end{abstract}

\maketitle
\tableofcontents

\section{Introduction}

We consider a cartesian diagram
\[
 \begin{tikzcd}
  \cX'	\ar[r,"\iota_\cX"]	\ar[d,"f'"']	& \cX	\ar[d,"f"]	\\
  \cS'	\ar[r,"\iota_\cS"']					& \cS
 \end{tikzcd}
\]
of adic spaces.
By adjunction we always get a base change homomorphism
\[
 \iota_\cS^*Rf_*\cF \longrightarrow Rf'_*\iota_\cX^*\cF
\]
for any sheaf of torsion abelian groups~$\cF$ on the étale site of~$\cX$.
If~$f$ is proper, we expect this base change homomorphism to be an isomorphism.
For analytic adic spaces with the usual finiteness assumptions this was shown in Huber's book \cite{Hu96}, \S~4.4, at least for torsion sheaves whose torsion is invertible on~$\cX$.
Especially for~$p$-torsion coefficients the theorem was largely extended, see \cite{Mann22}, Theorem~1.2.4.

We are intersested in a similar base change theorem for the tame site.
More precisely, we are going to prove the following result.

\begin{theorem} \label{bc_p_torsion}
 Let $f \colon \cX \to \cS$ be a proper morphism of discretely ringed pseudoadic spaces of specialization characteristic $\ch^+(\cS) = p > 0$.
 We consider the cartesian square
 \[
  \begin{tikzcd}
   \cX'	\ar[r,"\iota_\cX"]	\ar[d,"f'"']	& \cX	\ar[d,"f"]	\\
   \cS'	\ar[r,"\iota_\cS"']					& \cS
  \end{tikzcd}
 \]
 for a locally closed immersion~$\iota_\cS$.
 Then for every $p$-torsion sheaf~$\cF$ on the tame site~$\cX_t$, the base change homomorphism
 \[
  \iota_\cS^* Rf_* \cF \longrightarrow Rf'_*\iota_\cX^*\cF
 \]
 is an isomorphism.
\end{theorem}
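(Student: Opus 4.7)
The approach follows the classical template for proper base change, adapted to the tame site of discretely ringed pseudoadic spaces. The plan is first to reduce $\iota_\cS$ to a closed immersion. Any locally closed immersion factors as an open immersion into a closed subspace; the base change along the open part is formal, since $\iota_\cS^*$ there is simply restriction to an open and commutes with $Rf_*$. So it suffices to treat the case where $\iota_\cS$ is closed.

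The next step is to verify the base change morphism stalkwise at geometric points of $\cS'$ in the tame topology. Since $\iota_\cS$ is closed, such points may be identified with geometric points of $\cS$. The assertion thereby reduces to the statement that for every geometric point $\bar s \to \cS$, the natural map
\[
 (Rf_*\cF)_{\bar s} \longrightarrow R\Gamma(\cX_{\bar s}, \cF|_{\cX_{\bar s}})
\]
is an isomorphism, where $\cX_{\bar s} := f^{-1}(\bar s)$. The left-hand side unwinds to the filtered colimit $\colim_U R\Gamma(f^{-1}(U), \cF)$ over tame étale neighborhoods $U$ of $\bar s$ in $\cS$.

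Properness of $f$ guarantees that, as $U$ shrinks, the preimages $f^{-1}(U)$ present $\cX_{\bar s}$ as their cofiltered limit in pseudoadic spaces. The theorem is thus reduced to the commutation of tame cohomology with such cofiltered limits of proper morphisms. To get there, I would perform a dévissage on the coefficients: filter the $p$-torsion sheaf $\cF$ by its $p^n$-torsion subsheaves to reduce to a sheaf of $\F_p$-modules, and then use compatibility of $R\Gamma$ with filtered colimits to pass from the constant case $\Z/p\Z$ to a general such sheaf. In parallel, I would attempt a dévissage of the properness via Chow's lemma or a Nagata-style compactification, reducing to relative projective spaces and ultimately to relative curves, where tame cohomology with $\Z/p\Z$-coefficients can be computed using Artin--Schreier sequences and the structure of the tame fundamental group.

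The main obstacle is precisely the commutation of tame cohomology with cofiltered limits of proper morphisms for $p$-torsion coefficients. This commutation breaks down on the full étale site in equal characteristic $p$, since wild ramification produces cohomology that is neither finite nor constructible in any usable sense. The conceptual point is that the tame site is designed to strip away exactly this wild phenomenon, making the required finiteness and continuity properties plausible. Executing the plan therefore requires, as its technical core, a finiteness statement for tame $R^if_*\Z/p\Z$ of a proper morphism of discretely ringed pseudoadic spaces, analogous to the constructibility results underlying classical proper base change in étale cohomology.
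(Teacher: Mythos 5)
Your proposal follows the classical SGA4-style template: reduce to stalks, express the stalk as a colimit over shrinking neighborhoods, then dévissage on coefficients (down to $\F_p$) and on the morphism (Chow's lemma, relative curves, Artin--Schreier). You candidly identify the technical core you would need: commutation of tame cohomology with cofiltered limits of proper morphisms, underwritten by a finiteness statement for tame $R^i f_* \Z/p\Z$. That gap is real and you do not close it; as written, the argument terminates at precisely the point where the difficulty begins. Moreover, the continuity you invoke is not just ``plausible'' to assume — proving it for $p$-torsion coefficients in residue characteristic $p$ would require exactly the sort of constructibility result whose absence makes this theorem nontrivial in the first place, and the suggested Artin--Schreier route only controls $\GG_a$-cohomology, which is far from the finiteness you need.

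The paper avoids this entire dévissage and never proves (or uses) a finiteness theorem for $R^i f_*$. Its strategy is structurally different: first reduce from the tame to the strongly étale site (for $p$-torsion sheaves the pushforward $\nu_* \colon \Shv(\cX_t) \to \Shv(\cX_\set)$ has $R\nu_* = \nu_*$, and this pushforward satisfies base change along locally closed immersions, \cref{bc_t_set}); then identify strongly étale cohomology of $\cX = \Spa(X,R^+)$ with a filtered colimit over $X$-modifications $Y$ of $\Spec R^+$ via the Riemann--Zariski machinery, $H^i(\cX_\set,\cF) \cong \colim_Y H^i(Y_\et,\varphi_{Y,*}\cF)$ (\cref{cohomology_models}, \cref{pseudoadic_colim_models}); then apply the \emph{scheme-theoretic} proper base change theorem to each modification $Y \to \Spec R^+$ and pass to the colimit (\cref{key_bc}). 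The key structural input is \cref{characterisation_proper_over_S}, which says every proper discretely ringed adic space over $\Spa(R,R^+)$ is $\Spa(X,R^+)$ for a proper $R$-scheme $X$, so the modification machinery applies. In effect, the paper imports the hard finiteness and dévissage content from scheme étale cohomology as a black box rather than redeveloping it in the tame adic setting; your plan would have to rebuild all of it, which is why you get stuck.

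So: different route, and yours has a genuine unfilled gap at its center. If you want to salvage your approach, the missing ingredient is not plausible to conjure directly; the efficient path is the paper's reduction to the strongly étale site and then to schemes via modifications.
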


%

Let us first explain why we cannot expect the ususal statement in full generality, whithout restricting to locally closed immersions.
We consider the following example.
Let $(K,K^+)$ be a tamely closed valued field and $(L/L^+)/(K,K^+)$ a finite wildly ramified Galois extension with Galois group~$G$.
We consider the cartesian square
\[
 \begin{tikzcd}
  \prod_G (L,L^+) = \Spa(L \otimes_K L,L^+ \otimes_{K^+} L^+)	\ar[r]	\ar[d]	& \Spa(L,L^+)	\ar[d]	\\
  \Spa(L,L^+)													\ar[r]			& \Spa(K,K^+).
 \end{tikzcd}
\]
If the base change theorem held for this cartesian square, there would have to be an isomorphism
\[
 \cF(\Spa(L,L^+)) \overset{\sim}{\longrightarrow} \bigoplus_G \cF(\Spa(L,L^+))
\]
for any $p$-torsion sheaf on~$\cX_t$.
This is clearly false.
We could say that the reason for this failure is the cancellation of wild ramification.
In the statement of the above theorem this cannot happen since $\cS' \to \cS$ is an immersion, which has trivial residue field extensions.
It is conceivable to prove a slightly more general statement by imposing some tameness condition on $\cS' \to \cS$ (which a locally closed immersion should certainly satisfy).
However, it seems that such a condition would be quite restrictive and already exclude examples like $\A^1_{(K,K^+)} \to \Spa(K,K^+)$ for some non-algebraically closed valued field $(K,K^+)$.

Let us now explain the strategy for the proof of \cref{bc_p_torsion}.
In fact we prove a base change theorem for the strongly étale topology on~$\cX$ and in the end deduce the result for the tame site using that strongly étale and tame cohomology coincides for $p$-torsion sheaves.
In order to prove the strongly étale base change theorem we study proper adic spaces over a base space of the form $\cS = \Spa(R,R^+)$ for a Huber pair $(R,R^+)$ equipped with the discrete topology.
We prove in \cref{characterisation_proper_over_S} that proper adic spaces over~$\cS$ are always of the form $\cX = \Spa(X,R^+)$ for a scheme~$X$ that is proper over~$R$.
The strongly étale cohomology of~$\cX$ can be computed as a colimit over all $X$-modifications of $\Spec R^+$:
For all sheaves~$\cF$ on~$\cX_\set$, we have
\[
 H^i(\cX_\set,\cF) = \colim_Y H^i(Y_\et,\varphi_{Y,*} \cF),
\]
where the colimit runs over all $X$-modifications of $\Spec R^+$ and $\varphi_Y \colon \cX \to Y$ is the center map (see \cref{cohomology_models}).
The identification of the cohomology of~$\cX$ as a colimit over all modifications is not so straightforward.
This has to do with the fact that as a topological space,~$\cX$ does not coincide with the limit over all $X$-modifications~$Y$ of $\Spec R^+$.
Instead, the limit over all modifications is naturally a subspace of~$\cX$, the space of all Riemann Zariski points~$\cX_\RZ$ (see \cite{Tem11}).
However, the subspace~$\cX_\RZ$ can be thought of as a deformation retract of~$\cX$.
At the level of strongly étale topoi this is reflected by the following diagram (see \cref{comparison_RZ})
\[
 \begin{tikzcd}
  \cX_{\RZ,\set}	\ar[r,"\iota"]	\ar[rr,bend right,"\id"']	& \cX_\set	\ar[r,"\pi"]	& \cX_{\RZ,\set}.
 \end{tikzcd}
\]
The projection $\cX_\set \to \cX_{\RZ,\set}$ is easy to define whereas the embedding $\iota \colon \cX_{\RZ,\set} \to \cX_\set$ requires some work.
The main result of \cref{comparison_RZ} is an isomorphism
\[
 H^i(\cX_\set,\cF) \cong H^i(\cX_{\RZ,\set},\iota^*\cF)
\]
and the right hand side naturally identifies with the colimit over all $X$-modifications~$Y$ of $\Spec R^+$ of $H^i(Y_\et,\varphi_{Y,*}\iota^*\cF)$ (see \cref{sect_specialization}).
The idea now is to apply the proper base change theorem for the étale site of a scheme to these cohomology groups for all modifications~$Y$.

In the first four sections we work with arbitrary adic spaces.
Only starting in \cref{sect_specialization} we restrict our attention to discretely ringed adic spaces.
In \cref{sect_specialization} this is not yet crucial but makes the exposition easier.
The structure of proper adic spaces described in \cref{sect_proper} is much more complicated for arbitrary adic spaces as compared to discretely ringed ones.
In the future we aim for a more general tame base change result for all adic spaces.
This can build upon the first sections of the present article but needs more input in the study of proper morphisms.

\section{Setup and notation}

In this article we will work with the tame and strongly étale site of pseudoadic spaces, mainly discretely ringed ones.
To put everyone on the same page, we start by reviewing these concepts (assuming familiarity with adic spaces).

Pseudoadic spaces have been introduced by Huber in order to study closed subsets of adic spaces, which do not necessarily carry the structure of an adic space.
Following \cite{Hu96}, \S~1.10, a \emph{pseudoadic space} is a pair $X = (\underline{X},\lvert X \rvert)$, where~$\underline{X}$ is an adic space and $\lvert X \rvert \subseteq \underline{X}$ is a convex proconstructible subset.
A \emph{morphism of pseudoadic spaces} from~$Y$ to~$X$ is a morphism of adic spaces $f: \underline{Y} \to \underline{X}$ such that $f(\lvert Y \rvert) \subseteq \lvert X \rvert$.
The morphism~$f$ is \emph{étale} if it is étale as a morphism of adic spaces and $\lvert Y \rvert$ is open in $f^{-1}(\lvert X \rvert)$.
It is \emph{strongly étale} if it is étale and for all points $y \in \lvert Y \rvert$ mapping to $x \in \lvert X \rvert$ the residue field extension $k(y)/k(x)$ is unramified.
Similarly, $f$ is \emph{tame} if it is étale and the above residue field extensions are tamely ramified.
Defining coverings to be surjective families, we get the étale, strongly étale, and tame sites~$X_\et$, $X_\set$, and~$X_t$ of a pseudoadic space~$X$.

A point~$x$ of a (pseudo)adic space~$X$ has a residue field~$k(x)$, which is a valued field with valuation ring~$k(x)^+$.
Now $k(x)^+$ again has a residue field.
In order to avoid confusion with $k(x)$ we will call it the \emph{specialization field} of~$x$ and write $k(x)^\succ$.
Because of the existence of both the residue field \emph{and} the specialization field of a point, there are two different notions of characteristic of an adic space~$X$.
The \emph{(residue) characteristic} $\ch(X)$ is the set of all primes occuring as the characteristic of a residue field $k(x)$ for some point $x \in X$.
The \emph{specialization characteristic} $\ch^+(X)$ is the set of all primes occuring as the characteristic of a specialization field $k(x)^\succ$ for some $x \in X$.

An adic space~$X$ is \emph{discretely ringed} if locally it is isomorphic to $\Spa(A,A^+)$ for a Huber pair $(A,A^+)$ whose topology is discrete.
A big class of examples is provided by spaces of the form $\Spa(X,S)$ for a morphism of schemes $X \to S$.
Its points are triples $(x,R,\phi)$ consisting of a point $x \in X$, a valuation ring~$R$ of its residue field~$k(x)$ and a morphism $\phi \colon \Spec R \to S$ fitting into the commutative diagram
\[
 \begin{tikzcd}
  \Spec k(x)	\ar[r]			\ar[d]	& X	\ar[d]	\\
  \Spec R		\ar[r,"\phi"]			& S.
 \end{tikzcd}
\]
In order to improve notation, we will denote the points of $\Spa(X,S)$ just by a single letter, e.g.~$x$.
Then the corresponding triple will be written as $(\supp x,k(x)^+,c_x)$.
This is in line with the notation for affinoid adic spaces $\Spa(A,A^+)$ where a valuation~$x$ is determined by its support $\supp x$ and the valuation ring $k(x)^+$ on $k(x) = k(\supp x)$.
In the affine case the map~$c_x$ is uniquely determined (or more generally if~$S$ is separated).
The existence of~$c_x$ is equivalent to the condition that the valuation~$x$ be less or equal to~$1$ on~$A^+$.

\section{Riemann-Zariski points and morphisms} \label{section_RZ_points}

In \cite{Tem11} Temkin studies adic spaces of the form $\Spa(X,S)$ for a morphism of schemes $X \to S$ and compares $\Spa(X,S)$ to the limit over all $X$-modifications of~$S$.
It turns out that this limit -- he calls it  $\RZ_X(S)$, the relative Riemann Zariski space -- is homeomorphic to a subspace of $\Spa(X,S)$.
It consists of all points that do not admit nontrivial horizontal specializations.
This description can easily be extended to more general adic spaces and this is what we will do in this section.

\subsection{Specialization and generalization of continuous valuations}

We start by reviewing some facts about specializations on adic spaces (to be found in \cite{Hu93}, \S~2), here and there adding some results that were not treated in the literature.
Remember that a specialization $x \rightsquigarrow y$ on an adic space factors uniquely as $x \rightsquigarrow z \rightsquigarrow y$, where $x \rightsquigarrow z$ is a vertical and $z \rightsquigarrow y$ is a horizontal specialization.
If $X = \Spa(A,A^+)$ is affinoid, vertical specializations of~$x$ can be characterized as those that have the same support in $\Spec A$, whereas horizontal specializations are those that have the same center in $\Spec A^+$.
We can also factor $x \rightsquigarrow y$ as $x \rightsquigarrow z_1 \rightsquigarrow z_2 \rightsquigarrow y$, where $x \to \rightsquigarrow z_1$ is a horizontal specialization and $z_2 \rightsquigarrow y$ is a vertical specialization.
If the valuation~$z_1$ is nontrivial, we necessarily have $z_1 = z_2$.
If~$z_1$ is trivial, then also~$z_2$ is trivial and we call $z_1 \rightsquigarrow z_2$ a schematic specialization.

\begin{lemma} \label{specialization_continuous}
 Let~$A$ be a Huber ring and $v:A \to \Gamma$ a continuous valuation.
 Then any specialization of $v$ as an element of $\Spv(A)$ is again continuous.
\end{lemma}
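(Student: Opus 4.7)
The plan is to classify the specializations $v \rightsquigarrow w$ in $\Spv(A)$ and treat each elementary type separately. By the factorization recalled just before the statement, every such specialization breaks into a \emph{vertical} step, in which $\supp w = \supp v$ and $w$ ``refines'' $v$ in the sense that there is a convex subgroup $H \subseteq \Gamma_w$ with $\Gamma_w/H \cong \Gamma_v$ and $v = \pi_H \circ w$, followed by a \emph{horizontal} step, in which $\supp w$ strictly enlarges $\supp v$ and the value group $\Gamma_w$ identifies naturally with a convex subgroup of $\Gamma_v$ corresponding to a prime of the valuation ring of $v$. Since both steps preserve continuity independently, it suffices to verify the claim in each case.

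First I would fix a pair of definition $(A_0, I)$ of $A$, so that continuity of a valuation $u \in \Spv(A)$ translates into the concrete statement that for every $\delta \in \Gamma_u$ there exists $n$ with $u(I^n) < \delta$. In the vertical case, given $\gamma \in \Gamma_w$, I would choose $\delta \in \Gamma_v$ adapted to the position of $\gamma$ relative to $H$ and then invoke continuity of $v$ at $\delta$: if $\gamma \notin H$, take $\delta = \pi_H(\gamma) \in \Gamma_v$, and note that $\pi_H(w(x)) < \pi_H(\gamma)$ means $\gamma > h \cdot w(x)$ for all $h \in H$, hence in particular $w(x) < \gamma$; if $\gamma \in H$, take any $\delta \in \Gamma_v$ with $\delta < 1$, and use that $\pi_H(w(x)) < 1$ forces $w(x)$ below the whole of $H$, so a fortiori $w(x) < \gamma$. (The degenerate case where $\Gamma_v$ is trivial is handled directly: $\supp v = \supp w$ is then open in $A$, so $A/\supp w$ carries the discrete topology and any valuation on it is automatically continuous.) Continuity of $v$ at $\delta$ then yields an $n$ with $w(I^n) < \gamma$.

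In the horizontal case, $w$ coincides with $v$ on elements whose $v$-value lies in $\Gamma_w$, while those $a \in A$ with $v(a)$ strictly below $\Gamma_w$ are absorbed into $\supp w$ and sent to $0$. So, given $\gamma \in \Gamma_w \subseteq \Gamma_v$, continuity of $v$ provides $n$ with $v(I^n) < \gamma$; for each $a \in I^n$, either $v(a) \in \Gamma_w$ and $w(a) = v(a) < \gamma$, or $v(a)$ lies below $\Gamma_w$ so that $a \in \supp w$ and $w(a) = 0 < \gamma$. In either subcase $w(a) < \gamma$, giving $w(I^n) < \gamma$. The hard part will be pinning down the classification of $\Spv(A)$-specializations rigorously and tracking the precise relationships between $\Gamma_v$, $\Gamma_w$, $\supp v$, and $\supp w$ in each case (the relevant input being Huber's analysis in \cite{Hu93}); once this structural step is granted, the continuity verification reduces to a routine application of the cofinality criterion $u(I^n) \to 0$.
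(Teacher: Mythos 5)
Your proof is correct and follows essentially the same route as the paper: factor the specialization into a vertical and a horizontal step (using Huber's classification) and verify continuity separately for each via the characterization that $v(I^n)$ (equivalently $v(a^n)$ for $a \in A^{\circ\circ}$) becomes smaller than any prescribed $\gamma$. The only cosmetic difference is in the vertical step, where the paper avoids your subcase analysis ($\gamma \in H$, $\gamma \notin H$, and the trivial-group degeneracy) by arguing once with the quotient ordering on $\Gamma_v = \Gamma_{v'}/H$: if $v(a^n) < \gamma$ in the quotient, then $v'(a^n) < \gamma'$ for any lift $\gamma'$, since $v'(a^n)$ is itself a lift of $v(a^n)$.
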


\begin{proof}
 From the proof of Theorem~3.1 in \cite{Hu93} we know that a valuation~$v$ is continuous if and only if for every $a \in A^{\circ\circ}$ its value $v(a)$ is cofinal for~$\Gamma_v$, i.e. for every $\gamma \in \Gamma_v$ there is $n \in \N$ such that $v(a^n) < \gamma$.
 As every specialization can be split into a horizontal and a vertical one, we can deal with horizontal and vertical specializations separately.
 If $v'$ is a horizontal specialization of~$v$, there is a convex subgroup $H$ of~$\Gamma_v$ containing the characteristic subgroup $c\Gamma_v$ such that $\Gamma_{v'} = H$ and
 \[
  v'(a) = \begin{cases}
           v(a)	& \text{if } v(a) \in H	\\
           0	& \text{else}.
          \end{cases}
 \]
 Using the above characterisation of continuous valuations, it is clear that~$v'$ is again continuous.
 If $v'$ is a vertical specialization of~$v$, there is a convex subgroup $H$ of~$\Gamma_{v'}$ such that $\Gamma_v = \Gamma_{v'}/H$ and
 \[
  v(a) = \begin{cases}
          v'(a)H	& \text{if } v'(a) \ne 0	\\
          0			& \text{if } v'(a) = 0.
         \end{cases}
 \]
Remember that the ordering of $\Gamma_v = \Gamma_{v'}/H$ is defined by saying that $\gamma < \delta$ if and only if $\gamma' < \delta'$ for all lifts~$\gamma'$ and~$\delta'$ of $\gamma$ and~$\delta$ to $\Gamma_{v'}$.
If $\gamma' \in \Gamma_{v'}$ with image $\gamma \in \Gamma_v$ and $a \in A$ is topologically nilpotent, then $v(a^n) < \gamma$ for some $n \in \N$.
This implies $v'(a^n) < \gamma'$, hence $v'$ is continuous.
\end{proof}

\begin{lemma} \label{generalization_continuous}
 Let~$A$ be a Huber ring and $v:A \to \Gamma$ a continuous valuation.
 A vertical generalization~$w$ of~$v$ in $\Spv(A)$ is continuous if and only if $w(A^{\circ\circ}) < 1$.
 In particular if~$v$ is non-analytic, any vertical generalization of~$v$ is continuous.
 If~$v$ is analytic, a vertical generalization is continuous if and only if it is not a trivial valuation.
\end{lemma}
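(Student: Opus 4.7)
The plan is to reduce everything to the cofinality criterion for continuous valuations recalled in the proof of \cref{specialization_continuous}: $u$ is continuous iff $u(a)$ is cofinal in $\Gamma_u$ for every $a \in A^{\circ\circ}$. First I would unpack the structure of vertical generalizations, using the description of vertical specializations from the proof of \cref{specialization_continuous} with $v$ in the role of the specialization and $w$ in the role of the generalization: there is a convex subgroup $H \subseteq \Gamma_v$ such that $\Gamma_w = \Gamma_v / H$, with $w(a) = v(a)H$ when $v(a) \neq 0$ and $w(a) = 0$ otherwise.

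The forward direction is immediate, since cofinality of $w(a)$ in $\Gamma_w$ forces $w(a) < 1$. For the converse, assume $w(A^{\circ\circ}) < 1$ and fix $a \in A^{\circ\circ}$, $\gamma \in \Gamma_w$. Lift $\gamma$ to $\gamma' \in \Gamma_v$; cofinality of $v(a)$ in $\Gamma_v$ produces $n$ with $v(a^n) < \gamma'$, hence $w(a^n) \leq \gamma$ in the quotient. The hypothesis $w(a) < 1$ then gives $w(a^{n+1}) = w(a^n) w(a) < w(a^n) \leq \gamma$, establishing cofinality of $w(a)$. The degenerate case of trivial $\Gamma_w$ is handled at once, since $w(a) < 1$ then forces $w(a) = 0$.

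For the two final assertions: if $v$ is non-analytic, then $A^{\circ\circ} \subseteq \supp v = \supp w$, so $w(A^{\circ\circ}) = \{0\}$ and $w$ is continuous. If $v$ is analytic, fix $a_0 \in A^{\circ\circ}$ with $v(a_0) \neq 0$; when $w$ is trivial we have $w(a_0) = 1$, violating the criterion, while when $w$ is non-trivial, $H \subsetneq \Gamma_v$ is a proper convex subgroup, and cofinality of $v(a)$ forces $v(a) \notin H$ and $v(a) < 1$ for every $a \in A^{\circ\circ}$ with $v(a) \neq 0$. Convexity of $H$ containing $1$ then yields $v(a) < h$ for all $h \in H$, giving $w(a) < 1$ in $\Gamma_w$ and completing the proof.

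The subtlety is that $v(a^n) < \gamma'$ in $\Gamma_v$ only passes to $w(a^n) \leq \gamma$, not strict inequality, in the quotient; the trick of incrementing $n$ by one using $w(a) < 1$ is precisely where the hypothesis $w(A^{\circ\circ}) < 1$ is used in an essential way, so this is where most of the thought is required.
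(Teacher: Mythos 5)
Your proposal is correct and follows essentially the same route as the paper's proof: both reduce to the cofinality criterion from the proof of Lemma~\ref{specialization_continuous}, both use the description of $w$ via a convex subgroup $H$ with $\Gamma_w = \Gamma_v/H$, and both handle the analytic/non-analytic dichotomy in the same way. The only cosmetic difference is in the converse direction: the paper obtains a strict inequality directly by combining $v(a^{n-1}) < \gamma$ with $v(a) < h$ for all $h \in H$ to get $v(a^n) < \gamma h$, whereas you first pass to the non-strict inequality $w(a^n) \leq \bar\gamma$ in the quotient and then sharpen it by multiplying in one more factor of $w(a) < 1$ — the same hypothesis doing the same work, packaged slightly differently.
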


\begin{proof}
 Remember the characterization of continuous valuations as those valuations~$v$ such that $v(a)$ is cofinal for $\Gamma_v$ for any $a \in A^{\circ\circ}$ (see \cite{Hu93}, Theorem~3.1 and its proof).
 From this it is clear that if $w$ is continuous, then $w(A^{\circ\circ}) < 1$.
 Assume now that $w(A^{\circ\circ}) < 1$.
 Since~$w$ is a vertical generalization of~$v$, there is a convex subgroup $H$ of~$\Gamma_v$ such that $\Gamma_w = \Gamma_v/H$ and
 \[
  w(a) = \begin{cases}
          v(a)H		& \text{if } v(a) \ne 0,	\\
          0			& \text{if } v(a) = 0.
         \end{cases}
 \]
 For $a \in A^{\circ\circ}$ and $\bar{\gamma} \in \Gamma_w$ we have to find $n \in \N$ with $w(a)^n < \bar{\gamma}$.
 Let $\gamma \in \Gamma_v$ be a preimage of~$\bar{\gamma}$.
 As~$v$ is continuous, there is $n \in \N$ such that $v(a^{n-1}) < \gamma$.
 Moreover, the condition $w(a) < 1$ translates to $v(a) < h$ for every $h \in H$.
 Combining the two inequalities gives $v(a^n) < h\gamma$ for all $h \in H$, or in other words $w(a^n) < \bar{\gamma}$.
 
 If~$v$ is non-analytic, $v(A^{\circ\circ}) = 0$.
 Hence $w(A^{\circ\circ}) = 0 < 1$ for any vertical generalization~$w$ of~$v$.
 Thus, any vertical generalization is continuous.
 
 Suppose now that~$v$ is analytic and let~$w$ be a vertical generalization of~$v$.
 We first notice that if~$w$ is a trivial valuation, it is not continuous.
 Indeed, as~$v$ is analytic, there is $a \in A^{\circ\circ}$ with $v(a) > 0$.
 But then also $w(a) > 0$.
 This implies $w(a) = 1$ as~$w$ is a trivial valuation.
 
 If~$w$ is non-trivial, the convex subgroup~$H$ of~$\Gamma_v$ defining~$w$ is a proper subgroup of~$\Gamma_v$: $H \subsetneq \Gamma_v$.
 In order to show that~$w$ is continuous, we need to convince ourselves that
 \[
  v(A^{\circ\circ}) < H.
 \]
 Let $a \in A^{\circ\circ}$ and $h \in H$.
 There is $\gamma \in \Gamma_v$ such that $\gamma < H$.
 Using that $v(a)$ is cofinal for~$\Gamma_v$, we find $n \in \N$ such that $v(a)^n < \gamma$.
 Since $\gamma < h^n$, we obtain $v(a) < h$.
\end{proof}

\subsection{Riemann-Zariski points} \label{subsection_RZ_points}

\begin{definition}
 A \emph{Riemann-Zariski (RZ) point} of a pseudoadic space~$X$ is a point which does not admit nontrivial horizontal specializations.
 \end{definition}

Any closed point is Riemann-Zariski.
Any point coresponding to a trivial valuation is a Riemann-Zariski point.
We call these \emph{trivial Riemann-Zariski points}.
Also every point of an analytic adic space is Riemann-Zariski.

\begin{lemma} \label{preimage_RZ}
 Let $f: U \to X$ be a locally quasi-finite morphism of pseudoadic spaces and $x \in U$ a point.
 If $f(x)$ is a Riemann-Zariski point, then $x$ as well.
\end{lemma}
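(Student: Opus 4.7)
The plan is to show that any nontrivial horizontal specialization $x \rightsquigarrow x'$ in $U$ pushes forward under $f$ to a nontrivial horizontal specialization of $y := f(x)$, thereby contradicting the Riemann--Zariski hypothesis on $y$.

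By the description recalled in the proof of \cref{specialization_continuous}, the horizontal specialization $x \rightsquigarrow x'$ is determined by a convex subgroup $H \subseteq \Gamma_x$ containing the characteristic subgroup $c\Gamma_x$, with $x'(a) = x(a)$ for $x(a) \in H$ and $x'(a) = 0$ otherwise. The morphism $f$ is locally modelled on a ring homomorphism, so $\Gamma_y$ embeds canonically into $\Gamma_x$, and by direct functoriality $f(x')$ equals the horizontal specialization of $y$ associated with the convex subgroup $K := H \cap \Gamma_y$ of $\Gamma_y$. One checks that $K$ still contains the characteristic subgroup $c\Gamma_y$ (using $c\Gamma_y \subseteq c\Gamma_x \cap \Gamma_y \subseteq K$), confirming that $y \rightsquigarrow f(x')$ is indeed horizontal.

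Next I would invoke local quasi-finiteness of $f$ to conclude that the residue field extension $k(x)/k(y)$ is finite, hence $\Gamma_x/\Gamma_y$ is torsion. Were the specialization $y \rightsquigarrow f(x')$ trivial, one would have $K = \Gamma_y$, i.e.\ $\Gamma_y \subseteq H$; then $\Gamma_x/H$ would be a quotient of $\Gamma_x/\Gamma_y$ and hence torsion, but it is also torsion-free since $H$ is convex in the totally ordered group $\Gamma_x$. Consequently $H = \Gamma_x$, whence $x = x'$. Contrapositively, any nontrivial horizontal specialization $x \rightsquigarrow x'$ produces a nontrivial horizontal specialization of $y$, contradicting the assumption that $y$ is Riemann--Zariski.

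The main obstacle is the functoriality claim in the second paragraph: a horizontal specialization may enlarge the support of the valuation, and one has to verify that after applying $f$ no vertical component is accidentally introduced. This verification hinges on the local quasi-finiteness hypothesis, which ensures a clean finite-index inclusion $\Gamma_y \hookrightarrow \Gamma_x$ compatible with the convex-subgroup structure on both sides; otherwise the equality $H \cap \Gamma_y = \Gamma_y$ need not force $H = \Gamma_x$, and the argument would break down.
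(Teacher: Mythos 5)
Your argument is correct, and it takes a genuinely different route from the paper's. The paper's proof is brief: it observes (for an arbitrary morphism) that horizontal specializations push forward to horizontal specializations, so the Riemann--Zariski hypothesis on $f(x)$ forces $f(x') = f(x)$, and then invokes the topological fact that a locally quasi-finite morphism admits no specialization relations within a fiber to conclude $x = x'$. You make the pushforward explicit via the inclusion of value groups $\Gamma_y \hookrightarrow \Gamma_x$, writing $f(x') = y|_{H \cap \Gamma_y}$, and then replace the paper's topological input with a valuation-theoretic one: finiteness of the residue field extension $k(x)/k(y)$ makes $\Gamma_x/\Gamma_y$ torsion, so if $\Gamma_y \subseteq H$ then $\Gamma_x/H$ is simultaneously torsion and (being a totally ordered abelian group) torsion-free, hence trivial. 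This is a clean, self-contained alternative that essentially unwinds what the paper's ``discrete fibers'' fact amounts to for a horizontal specialization; the paper's route has the advantage of not needing to identify exactly which consequence of quasi-finiteness is being used, while yours is more elementary and explicit. One small correction to your closing remarks: the ``functoriality'' step you flag as the main obstacle -- that $f(x')$ picks up no vertical component -- does \emph{not} depend on local quasi-finiteness. The identity $f(x') = y|_{H \cap \Gamma_y}$ holds for any morphism, because horizontal specializations preserve centers (equivalently, the direct computation $f(x')(b) = y(b)$ if $y(b) \in H \cap \Gamma_y$ and $0$ otherwise is valid in general). Local quasi-finiteness enters only at the last step, to force $H = \Gamma_x$ from $H \cap \Gamma_y = \Gamma_y$.
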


\begin{proof}
 If $x'$ is a horizontal specialization of~$x$, then $f(x')$ is a horizontal specialization of~$f(x)$, hence equal to~$f(x)$.
 Since $f$ is locally quasi-finite, there are no specialization relations between the points of a fiber.
 This implies $x = x'$.
\end{proof}

\begin{lemma} \label{RZ_qc}
 Let~$X$ be a quasi-compact pseudoadic space.
 Then the subset of Riemann-Zariski points is quasi-compact.
\end{lemma}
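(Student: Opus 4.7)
The plan is to show that $X_\text{RZ}$ is pro-constructible in the underlying spectral space of $X$; since pro-constructible subsets of quasi-compact spectral spaces are quasi-compact, this yields the conclusion.

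I would first reduce to the affinoid case $X = \Spa(A, A^+)$. The quasi-compact $X$ admits a finite cover by quasi-compact open subspaces of affinoid type; some care is required since rational subdomains are not in general stable under horizontal specialization, so the cover must be chosen compatibly with horizontal specialization (for instance using preimages of opens along the center map to $\Spec A^+$). Granted the reduction, I would use Huber's classification of horizontal specializations of a continuous valuation $v$: they are exactly the valuations $v^H$ for convex subgroups $H \subseteq \Gamma_v$ containing the characteristic subgroup $c\Gamma_v$, with $v^H = v$ iff $H = \Gamma_v$. In particular $v \in X_\text{RZ}$ iff $\Gamma_v = c\Gamma_v$.

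I would then construct a retraction $r \colon X \to X_\text{RZ}$ by sending $v$ to its maximal horizontal specialization $v^{c\Gamma_v}$. One verifies that $v^{c\Gamma_v}$ is indeed an RZ point: its value group equals $c\Gamma_v$, which in turn equals its own characteristic subgroup (the values of topologically nilpotent elements are unchanged by the specialization), and $v^{c\Gamma_v}$ is a valid point of $\Spa(A, A^+)$, since it is continuous by \cref{specialization_continuous} and clearly satisfies $v^{c\Gamma_v}(A^+) \leq 1$. The map $r$ is a retraction, and once continuous, $X_\text{RZ} = r(X)$ is the continuous image of the quasi-compact space $X$ and therefore quasi-compact.

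The main obstacle is establishing continuity of $r$. Preimages of basic opens of the form $\{v : v(a) \leq v(b) \neq 0\}$ unfold into combined conditions: $v(b)$ must lie in $c\Gamma_v \setminus \{0\}$ (expressible via the cofinality characterization of topologically nilpotent values in the proof of \cref{specialization_continuous} as a union over $c \in A^{\circ\circ}$ and $n \in \N$ of rational conditions $v(c^n) \leq v(b)$), and either $v(a) \notin c\Gamma_v$ or $v(a) \leq v(b)$. Showing that the resulting locus is open on $X$, by decomposing these mixed conditions as filtered unions of rational subdomains, is the technical core of the argument. Alternatively one can bypass the retraction and argue directly for pro-constructibility by exploiting that convex subgroups of $\Gamma_v$ containing $c\Gamma_v$ form a chain, so that the nonexistence of a proper such subgroup is a well-behaved condition under the constructible topology.
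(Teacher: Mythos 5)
There is a genuine gap. The argument hinges on continuity of the retraction $r$, which you yourself flag as the main obstacle and never establish; this is not a routine verification but essentially Temkin's result that the center map to the Riemann--Zariski space is continuous, and you offer no route to it here. There are also structural problems upstream. First, the reduction to the affinoid case is not achieved: the set of RZ points does not restrict along an open cover, since a point that is RZ in an affinoid open $U_i$ may acquire a nontrivial horizontal specialization outside $U_i$ --- exactly the phenomenon \cref{U_RZ_not_open} illustrates later. Second, $X$ is a pseudoadic space $(\underline{X},\lvert X\rvert)$, so the RZ property must be computed inside the proconstructible subset $\lvert X\rvert$; the maximal horizontal specialization $v^{c\Gamma_v}$ taken in $\Spa(A,A^+)$ need not lie in $\lvert X\rvert$, so even granting continuity, $r$ is not a self-map of the space whose RZ locus you are trying to control. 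Finally, the opening claim that $X_{\RZ}$ is pro-constructible is neither proved (and the alternative sketch at the end does not do it) nor actually used by the retraction argument.

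The paper's proof avoids all of this with a short general-topology argument. Since $\lvert X\rvert$ is quasi-compact and locally spectral it is spectral; in a spectral space every point specializes to a closed point, and closed points of $\lvert X\rvert$ are vacuously RZ, so every point specializes to an RZ point. Given an open cover $S = \bigcup_i S_i$ of the RZ locus $S$ with $S_i = U_i \cap S$ for opens $U_i \subseteq \lvert X\rvert$, stability of opens under generalization forces $\lvert X\rvert = \bigcup_i U_i$ (any $x$ specializes to some RZ point $y \in U_i$, hence $x \in U_i$), and quasi-compactness of $\lvert X\rvert$ then yields a finite subcover of $S$. No retraction, no continuity argument, no affinoid reduction, and no pro-constructibility are needed.
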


\begin{proof}
 Let~$\underline{X}$ be the underlying adic space of~$X$ and~$|X|$ the subset of~$\underline{X}$ such that $X = (\underline{X},|X|)$.
 The adic space~$\underline{X}$ is locally spectral by \cite{Hu93}, Theorem~3.5~(i).
 Moreover, by definition of a pseudoadic space, $|X|$ is locally pro-constructible in~$\underline{X}$, hence locally spectral, as well.
 Being quasi-compact, $|X|$ is even spectral.
 The decisive property is that in a spectral space every point specializes to a closed point.
 Moreover, every closed point of~$|X|$ is Riemann-Zariski.
 Hence, every point of~$|X|$ specializes to a Riemann-Zariski point in~$|X|$.
 
 Denote by~$S$ the subset of Riemann-Zariski points of~$|X|$.
 Let
 \[
  S = \bigcup_{i \in I} S_i
 \]
 be an open cover.
 We need to show that it has a finite subcover.
 For every $i$ choose an open subset $U_i$ of $|X|$ such that $U_i \cap S = S_i$.
 Then $|X| = \bigcup_i U_i$ because a point in the complement of the union of the $U_i$ would specialize to some Riemann-Zariski point but every Riemann-Zariski point is contained in some~$U_i$.
 Since~$|X|$ is quasi-compact, finitely many~$U_i$'s cover it.
 But then the corresponding~$S_i$ cover~$S$.
\end{proof}

\begin{lemma} \label{separated_horizontal}
 Let~$X$ be separated over an affinoid adic space~$S$ and $x \in X$ a point.
 Then the set of horizontal specializations of~$x$ is totally ordered by specialization.
\end{lemma}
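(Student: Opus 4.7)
The plan is to reduce to the affinoid case, where horizontal specializations of a point $x$ are classified by convex subgroups of the value group $\Gamma_x$ and hence form a chain under inclusion. Separatedness of $X \to S$ will then be used to transfer this chain structure to the global setting via the valuative criterion.

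Given two horizontal specializations $x_1'$ and $x_2'$ of $x$ in $X$, first choose affinoid open neighborhoods $U_i = \Spa(A_i,A_i^+)$ of $x_i'$; each contains $x$ because $x \rightsquigarrow x_i'$ and $U_i$ is open. Inside $U_i$, the horizontal specialization $x \rightsquigarrow x_i'$ is classified by a convex subgroup $H_i \subseteq \Gamma_x$ containing the characteristic subgroup $c\Gamma_x$, as recalled in the preceding discussion of horizontal specializations. Since $\Gamma_x$ is intrinsic to the point $x$ (it is the value group of the valuation on the residue field $k(x)$), the subgroups $H_1$ and $H_2$ sit inside the same totally ordered abelian group and are therefore comparable; without loss of generality $H_2 \subseteq H_1$.

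Working inside $U_2$, the convex subgroup $H_1 \supseteq H_2 \supseteq c\Gamma_x$ defines a further horizontal specialization $y$ of $x$ in $U_2$, and the inclusion $H_2 \subseteq H_1$ translates into the chain $x \rightsquigarrow y \rightsquigarrow x_2'$ in $U_2 \subseteq X$. It remains to identify $y$ with $x_1'$. Both are horizontal specializations of $x$ in $X$ classified by the same convex subgroup $H_1 \subseteq \Gamma_x$, so they induce the same data on residue fields and valuation rings, and both map to the corresponding horizontal specialization of $s = f(x)$ in the affinoid $S$ (where horizontal specializations of $s$ are themselves classified by convex subgroups of $\Gamma_s$). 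The valuative criterion of separatedness applied to $X \to S$ then forces $y = x_1'$, and we conclude $x_1' \rightsquigarrow x_2'$ in $X$.

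The main obstacle is the identification $y = x_1'$: one must make precise that both points realize the same lifting data for the valuative criterion despite living in potentially different affinoid opens $U_1, U_2$. Concretely, one has to check that the residue-field, valuation-ring, and $S$-valued data attached to a horizontal specialization of $x$ depend only on the convex subgroup of $\Gamma_x$, and not on the choice of affinoid chart realising it. Once this intrinsic description is in place, the uniqueness clause of the valuative criterion applies, and the chain structure of convex subgroups of $\Gamma_x$ does the rest.
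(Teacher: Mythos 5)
Your high-level strategy — reduce to affinoids where horizontal specializations are classified by convex subgroups, then use separatedness to compare specializations realized in different charts — matches the paper's. But the critical step, identifying $y$ with $x_1'$, is left unresolved, and you acknowledge it yourself as ``the main obstacle.'' The appeal to ``the valuative criterion of separatedness'' is where the argument has a genuine gap: you never set up a concrete lifting problem to which a uniqueness statement could apply, and it is not clear what test object the criterion should be run against. Knowing that $y$ and $x_1'$ are both horizontal specializations of $x$, carry the same value group $H_1$, have the same residue field and valuation ring, and map to the same point of $\cS$ does not by itself pin them down as a single point of $\cX$: without separatedness one can glue two copies of an affinoid along the locus where the specialization does not yet exist and obtain two distinct horizontal specializations with identical intrinsic data, so some global input is really needed, and the precise mechanism matters.

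The paper supplies that mechanism explicitly. It chooses the affinoids $U_1 \ni x_1$, $U_2 \ni x_2$, and $V \ni s_1$ compatibly, forms the affinoid open $U_1 \times_V U_2 \subset \cX \times_\cS \cX$ containing $\Delta(x)$, and constructs the horizontal specialization $y_2$ of $\Delta(x)$ in this affinoid corresponding to the larger convex subgroup. Separatedness enters as the statement that $\Delta$ is a \emph{closed immersion}: since $y_2$ specializes $\Delta(x)$, it lies on the diagonal, so its two projections agree. The projection to $U_2$ is $x_2$, hence the projection to $U_1$ is also $x_2$, i.e.\ $x_2 \in U_1$. Only then does the affinoid classification apply, inside the single chart $U_1$, to conclude $x_2 \rightsquigarrow x_1$. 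In other words, the content of separatedness is used to place both specializations in a \emph{common} affinoid, after which your intended argument goes through. Your version constructs $y$ inside $U_2$ alone and then hopes that separatedness will identify it with a point of $U_1$ from the outside; this is the step that needs to be replaced by (or proved equivalent to) the diagonal argument. Also note that the characteristic subgroup $c\Gamma_x$ is chart-dependent, which the paper tracks as $c_1\Gamma_x$, $c_2\Gamma_x$ and uses in forming the characteristic subgroup of $\Delta(x)$ in $U_1 \times_V U_2$; your write-up treats $c\Gamma_x$ as if it were intrinsic, which should at least be flagged even though it does not ultimately derail the reduction.
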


\begin{proof}
 First note that if~$X$ is affinoid, the horizontal specializations of a point $x \in X$ are in one-to-one correspondence with the convex subgroups of the value group $\Gamma_x$ containing the characteristic subgroup $c\Gamma_x$.
 In particular, they are totally ordered by specialization.
 
 In the general case, let $x_1$ and $x_2$ be horizontal specializations of~$x$ in~$X$.
 They correspond to convex subgroups~$\Delta_1$ and~$\Delta_2$ of the value group~$\Gamma_x$ of~$x$.
 Since the convex subgroups of~$\Gamma_x$ are totally ordered by inclusion, we may assume that $\Delta_1 \subseteq \Delta_2$.
 We claim that $x_2$ specializes to~$x_1$.
 
 The images $s_1$ and~$s_2$ of $x_1$ and~$x_2$ in~$S$ are horizontal specializations of the image~$s$ of~$x$.
 Since~$S$ is affinoid and $\Delta_1 \subseteq \Delta_2$, $s_2$ specializes to~$s_1$.
 Choose affinoid neighborhoods $U_1$, $U_2$, and~$V$ of $x_1$, $x_2$ and~$s_1$, respectively.
 Note that $s_2 \in V$ and~$x$ is contained in both~$U_1$ and~$U_2$.
 We denote by $c_1\Gamma_x$ and~$c_2\Gamma_x$ the characteristic subgroups of~$\Gamma_x$ with respect to~$U_1$ and~$U_2$, respectively.
 They are both contained in~$\Delta_2$.
 
 We consider the diagonal
 \[
  \Delta: X \to X \times_S X.
 \]
 The open subspace $U_1 \times_V U_2$ of $X \times_S X$ is affinoid and contains $\Delta(x)$.
 The value group of $\Delta(x)$ equals~$\Gamma_x$ and its characteristic subgroup $c_{12}\Gamma_x$ with respect to $U_1 \times_V U_2$ is the subgroup generated by $c_1\Gamma_x$ and~$c_2\Gamma_x$ (in fact one is contained in the other so it is just the bigger one of the two).
 Hence $c_{12}\Gamma_x \subseteq \Delta_2$ and there is a corresponding horizontal specialization $y_2 \in U_1 \times_V U_2$ of $\Delta(x)$.
 Its image in $U_2$ is the unique horizontal specialization of~$x$ corresponding to~$\Delta_2$, i.e. equal to~$x_2$.
 Since~$X$ is separated over~$S$, $\Delta$ is a closed immersion and thus~$y_2$ lies on the diagonal.
 Therefore the image of~$y_2$ in~$U_1$ also equals~$x_2$.
 In other words, $x_2$ is contained in~$U_1$.
 But as the horizontal specializations of~$x$ in the affinoid space~$U_1$ are totally ordered, this implies that~$x_1$ is a specialization of~$x_2$.
\end{proof}

define characteristic subgroup $c\Gamma_v$ for separated spaces. Relation to horizontal specializations.

\begin{lemma} \label{qc_horizontal}
 Let~$X$ be a quasi-compact pseudoadic space and $x \in X$ a point.
 Then there is a Riemann-Zariski point which is a horizontal specialization of~$x$.
\end{lemma}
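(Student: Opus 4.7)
The plan is to exhibit the set $C_h \subseteq |X|$ of horizontal specializations of $x$ (including $x$ itself) as a pro-constructible subspace of the spectral space $|X|$, and then to extract the desired Riemann--Zariski point as a closed point of $C_h$. By the argument in the proof of \cref{RZ_qc}, $|X|$ is spectral, so any pro-constructible subspace of $|X|$ is itself spectral; if $C_h$ is pro-constructible then, being non-empty (it contains $x$), it admits a closed point $y$. By construction $y$ is a horizontal specialization of $x$, and any nontrivial horizontal specialization $z$ of $y$ in $X$ would, by transitivity of horizontal specialization (immediate from the ``same center'' characterisation recalled in the setup), lie in $C_h$ and thereby contradict the closedness of $y$ in $C_h$. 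Hence such a $y$ is the sought Riemann--Zariski horizontal specialization.

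To establish pro-constructibility of $C_h$, I would fix an affinoid chart $\underline{U} = \Spa(A, A^+)$ of $\underline{X}$ containing $x$. By the characterisation, the horizontal specializations of $x$ in $\underline{U}$ are precisely those specializations of $x$ that have the same center in $\Spec A^+$, so
\[
  C_h \cap \underline{U} = \overline{\{x\}} \cap c^{-1}(\mathrm{center}(x)) \cap |X|,
\]
where $c \colon \Spa(A, A^+) \to \Spec A^+$ is the (spectral) center map. Each of the three factors on the right is pro-constructible in $\underline{U}$: $\overline{\{x\}}$ because it is closed; $c^{-1}(\mathrm{center}(x))$ because spectral maps pull back the pro-constructible singleton $\{\mathrm{center}(x)\}$ (singletons are closed in the Hausdorff constructible topology of $\Spec A^+$) to a pro-constructible subset; and $|X|$ because it is pro-constructible in $\underline{X}$ by the definition of a pseudoadic space. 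Intersections of pro-constructible subsets are pro-constructible and the property is local for finite open covers of spectral spaces, so quasi-compactness of $X$ allows us to glue over a finite affinoid cover of $\underline{X}$ and conclude that $C_h$ is pro-constructible in $|X|$.

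The main technical obstacle I anticipate lies in assembling the standard spectral-space facts (spectrality of the center map, pro-constructibility of points, locality of pro-constructibility) correctly, together with the mild pseudoadic subtlety that we are working inside $|X|$ rather than $\underline{X}$. Once these bookkeeping steps are in hand, the existence of a closed point in the non-empty spectral subspace $C_h$ yields the desired Riemann--Zariski horizontal specialization of $x$.
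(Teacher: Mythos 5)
Your route is genuinely different from the paper's: instead of working directly with the value group $\Gamma_x$ (picking the maximal horizontal specialization $x|_{c\Gamma_x}$ inside each affinoid chart and comparing the finitely many characteristic subgroups $c_i\Gamma_x$ across a finite cover to select the minimal one), you try to exhibit the horizontal specializations as a pro-constructible subset of the spectral space $|X|$ and extract a closed point. Unfortunately, there is a concrete gap in the way you identify that set.

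The formula
\[
  C_h \cap \underline{U} = \overline{\{x\}} \cap c^{-1}(\mathrm{center}(x)) \cap |X|
\]
does not describe the set of horizontal specializations of $x$; it describes the strictly larger set of all specializations of $x$ with the same center. A specialization $x \rightsquigarrow y$ decomposes as a horizontal specialization, followed by a schematic specialization (between two trivial valuations), followed by a vertical one, and the middle piece can preserve the center without being horizontal. The sentence in \S 3.1 that you invoke records a necessary property of horizontal specializations, not a characterization; the precise description — via convex subgroups $H$ with $c\Gamma_x \subseteq H \subseteq \Gamma_x$, which is what the paper uses in \cref{separated_horizontal} and in its own proof of this lemma — is strictly finer. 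Concretely, in $\Spa(\mathbb{Z}[t],\mathbb{Z})$ with the discrete topology, let $x$ be the trivial valuation supported at $(0) \subset \mathbb{Z}[t]$ and $y$ the trivial valuation supported at $(t)$. Then $y$ is a specialization of $x$, both have center $(0) \subset \mathbb{Z}$, yet $y$ is not a horizontal specialization of $x$: since $x$ is trivial, $\Gamma_x$ is the trivial group, so $x$ has no nontrivial horizontal specializations at all. Moreover $y$ is a closed point of the set your formula cuts out, while $x$ is not, so your construction would output $y$. That $y$ is Riemann--Zariski is true, but it is not a horizontal specialization of $x$, which is what the lemma asserts and what the subsequent construction of $x_{\RZ}$ requires.

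If you instead work with the genuine set $C_h$ of horizontal specializations of $x$, your ``a closed point of $C_h$ has no nontrivial horizontal specialization, by transitivity'' step does go through (transitivity follows from $(x|_H)|_{H'} = x|_{H'}$ together with $c\Gamma_{x|_H} = c\Gamma_x$). But then the pro-constructibility of $C_h$ is no longer established by your formula, and it is not clear how to get it without essentially redoing the paper's argument via convex subgroups. As written, the proposal therefore has a gap at precisely the point you flagged as ``bookkeeping.''
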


\begin{proof}
 If~$X$ is affinoid, the horizontal specializations of~$x$ correspond to the convex subgroups of~$\Gamma_x$ containing the characteristic subgroup $c\Gamma_x$.
 The set of horizontal specializations has a maximal element~$y$ corresponding to the convex subgroup $c\Gamma_x$.
 This point $y$ is a Riemann-Zariski point and it is uniquely determined.
 
 In the general case, the set of horizontal specializations is partially ordered by specialization.
 Let
 \[
  X = \bigcup_i U_i
 \]
 be a covering of~$X$ by finitely many affinoid opens $U_i$.
 In every~$U_i$ containing~$x$ there is a unique Riemann-Zariski point~$x_i$ (with respect to~$U_i$, not necessarily with respect to~$X$) that is a horizontal specialization of~$x$.
 It corresponds to the characteristic subgroup~$c_i\Gamma_x$ of~$\Gamma_x$ computed with respect to~$U_i$.
 The finitely many convex subgroups~$c_i\Gamma_x$ of~$\Gamma_x$ are totally ordered and we can pick a minimal element~$c_j\Gamma_x$.
 Since the~$U_i$ cover~$X$, $x_j$ is a Riemann-Zariski point not only in~$U_j$ but also in~$X$.
\end{proof}

Combining \cref{separated_horizontal} with \cref{qc_horizontal}, we see that every point $x$ in a quasi-compact adic space that is separated over an affinoid one there is a unique Riemann-Zariski point which is a horizontal specialization of~$x$.
We denote it by~$x_{\RZ}$.

\subsection{Riemann-Zariski morphisms} \label{subsection_RZ_morphisms}

\begin{definition}
 A morphism of pseudoadic spaces is called Riemann-Zariski if it maps Riemann-Zariski points to Riemann-Zariski points.
\end{definition}

For instance, every closed immersion is Riemann Zariski and any morphism to an analytic adic space is Riemann Zariski.

\begin{lemma} \label{composition_RZ}
 The composition of two Riemann-Zariski morphisms is Riemann-Zariski.
\end{lemma}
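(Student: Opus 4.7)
The statement is essentially immediate from the definition, so the plan is very short. My plan is to unwind the definition directly: suppose $f \colon X \to Y$ and $g \colon Y \to Z$ are Riemann--Zariski morphisms of pseudoadic spaces, and let $x \in X$ be a Riemann--Zariski point. Since $f$ is Riemann--Zariski, $f(x) \in Y$ is again a Riemann--Zariski point. Applying the defining property of $g$ to $f(x)$, the image $g(f(x)) = (g \circ f)(x)$ is a Riemann--Zariski point of $Z$. Since $x$ was an arbitrary Riemann--Zariski point of $X$, the composition $g \circ f$ sends Riemann--Zariski points to Riemann--Zariski points, hence is Riemann--Zariski by definition.

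There is really no obstacle here: unlike the preceding lemmas (e.g.\ \cref{preimage_RZ}, \cref{RZ_qc}, \cref{separated_horizontal}, \cref{qc_horizontal}), which all required genuine input about how horizontal specializations behave under various hypotheses, this lemma is just the observation that the class of points being considered is stable under composition of the relevant class of morphisms by fiat. The only thing worth noting is that no hypothesis on $Y$ (such as quasi-compactness or separatedness) is needed, because the Riemann--Zariski property of a morphism is phrased pointwise and is preserved under composition of set-theoretic maps preserving a fixed subset of points on each space.
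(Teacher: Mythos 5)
Your proof is correct and is exactly the unwinding of the definition that the paper intends when it says the lemma is ``clear from the definition.'' No further comment is needed.
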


\begin{proof}
 This is clear from the definition.
\end{proof}

\begin{lemma} \label{gf_RZ_then_f_RZ}
 Let $f: X \to Y$ and $g: Y \to Z$ be morphisms of pseudoadic spaces and suppose that~$g$ is locally quasi-finite.
 If $gf$ is Riemann-Zariski, then~$f$ as well.
\end{lemma}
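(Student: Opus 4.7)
The plan is a direct unwinding of the definitions, very close in spirit to the proof of \cref{preimage_RZ}. I pick a Riemann-Zariski point $x \in X$ and aim to show $f(x) \in Y$ is Riemann-Zariski by taking an arbitrary horizontal specialization $y$ of $f(x)$ in $Y$ and concluding $y = f(x)$.

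The key step is to push $y$ forward by $g$: since morphisms of (pseudo)adic spaces preserve horizontal specializations (the same fact tacitly used in \cref{preimage_RZ}), the image $g(y)$ is a horizontal specialization of $g(f(x)) = (gf)(x)$. By hypothesis $gf$ is Riemann-Zariski, so $(gf)(x)$ is a Riemann-Zariski point of $Z$, and therefore $g(y) = g(f(x))$. Hence $y$ and $f(x)$ lie in the same fiber of $g$.

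At this point I invoke that $g$ is locally quasi-finite: there are no nontrivial specialization relations between points of a fiber of such a morphism (exactly as used in the proof of \cref{preimage_RZ}). Since the specialization $f(x) \rightsquigarrow y$ takes place inside the fiber $g^{-1}(g(f(x)))$, this forces $y = f(x)$, which is what we wanted.

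I expect the argument to be entirely routine; the only ingredient that deserves any care is the preservation of horizontal specializations under a morphism of pseudoadic spaces, which I would treat as a standard property and cite without further comment, exactly as in \cref{preimage_RZ}. Beyond that the proof is purely formal, and I do not anticipate any real obstacle.
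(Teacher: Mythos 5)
Your proposal is correct and is essentially the paper's proof: the paper simply notes that $z=(gf)(x)$ is Riemann-Zariski and then cites \cref{preimage_RZ} for $g$, whereas you inline the short argument of \cref{preimage_RZ} (pushing a horizontal specialization of $f(x)$ forward by $g$ and using local quasi-finiteness) rather than citing it. The content is identical.
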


\begin{proof}
 Let $x \in X$ be a Riemann-Zariski point with images $y \in Y$ and $z \in Z$.
 Then $g(y) = z$ and~$z$ is a Riemann-Zariski point.
 Since~$g$ is locally quasi-finite, \cref{preimage_RZ} implies that~$y$ is a Riemann-Zariski point.
\end{proof}

\begin{lemma} \label{RZ_stable_bc}
 Consider a Cartesian square
 \begin{equation} \label{diagram_bc_RZ}
  \begin{tikzcd}
   Y'	\ar[r,"\pi_Y"]	\ar[d,"f'"']	& Y	\ar[d,"f"]	\\
   X'	\ar[r,"\pi_X"']					& X
  \end{tikzcd}
 \end{equation}
 of quasi-compact quasi-separated pseudoadic spaces such that $f$ is Riemann-Zariski and $\pi_X$ locally quasi-finite.
 Then $f'$ is a Riemann-Zariski morphism.
\end{lemma}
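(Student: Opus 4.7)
Let $y' \in Y'$ be a Riemann-Zariski point and set $y := \pi_Y(y') \in Y$, $x' := f'(y') \in X'$, and $x := f(y) = \pi_X(x') \in X$. Since the square is Cartesian, $\pi_Y$ is a base change of $\pi_X$ and hence also locally quasi-finite. My plan is to propagate the Riemann-Zariski property around the square in three steps: first show that $y$ is RZ in $Y$; then use that $f$ is RZ to deduce that $x$ is RZ in $X$; and finally apply \cref{preimage_RZ} to $\pi_X$ at the point $x' \mapsto x$ to deduce that $x'$ is RZ in $X'$.

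The nontrivial step is the first, which amounts to the statement complementary to \cref{preimage_RZ}: a locally quasi-finite morphism also pushes RZ points to RZ points. Suppose for contradiction that $y \rightsquigarrow y_2$ is a nontrivial horizontal specialization in $Y$. The plan is to lift it to a nontrivial horizontal specialization $y' \rightsquigarrow y'_2$ in $Y'$, which would contradict $y'$ being RZ. To carry this out, I work in affinoid charts $\Spa(B',B'^+) \to \Spa(B,B^+)$ of $\pi_Y$. Because $\pi_Y$ is locally quasi-finite, the residue field extension $k(y')/k(y)$ is algebraic, so $\Gamma_{y'}$ is a torsion extension of $\Gamma_y$. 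Convex subgroups of $\Gamma_y$ containing the characteristic subgroup $c\Gamma_y$ are then in bijection, by saturation, with convex subgroups of $\Gamma_{y'}$ containing $c\Gamma_{y'}$. The convex subgroup defining $y_2$ saturates to a convex subgroup of $\Gamma_{y'}$ which defines a nontrivial horizontal specialization $y' \rightsquigarrow y'_2$ with $\pi_Y(y'_2) = y_2$; since $|Y'|$ is convex in its ambient adic space, this specialization stays in $Y'$, giving the desired contradiction.

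Once $y$ is known to be RZ in $Y$, the hypothesis that $f$ is Riemann-Zariski gives $x = f(y)$ RZ in $X$. Then $\pi_X$ is locally quasi-finite and $\pi_X(x') = x$ is RZ, so \cref{preimage_RZ} applies to yield $x' = f'(y')$ RZ in $X'$. As $y'$ was an arbitrary RZ point of $Y'$, we conclude that $f'$ is a Riemann-Zariski morphism.

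The main obstacle is making the lifting argument in the first step rigorous. One must verify that the inclusion $\Gamma_y \hookrightarrow \Gamma_{y'}$ induced by a locally quasi-finite morphism matches characteristic subgroups, so that horizontal specializations really do correspond under saturation, and one must check that the lifted specialization $y'_2$ lies in the pseudoadic subset $|Y'|$. Both points reduce to a valuation-theoretic analysis on affinoid charts in the spirit of the proofs of \cref{separated_horizontal} and \cref{qc_horizontal}, together with the algebraicity of residue field extensions under locally quasi-finite maps.
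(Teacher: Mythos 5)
Your Step~1 is false, and this breaks the whole plan. You claim that a locally quasi-finite morphism pushes Riemann--Zariski points forward to Riemann--Zariski points, i.e.\ that $y'$ being RZ in $Y'$ forces $y=\pi_Y(y')$ to be RZ in $Y$. This is exactly the \emph{wrong} direction: \cref{preimage_RZ} says that locally quasi-finite morphisms \emph{reflect} the RZ property (if $f(u)$ is RZ, so is $u$), not that they preserve it. The paper's own \cref{U_RZ_not_open} is a counterexample to your claim: the point $x$ is RZ in the open subspace $X_1$ but has a nontrivial horizontal specialization $x_2 \in X\setminus X_1$, so $x$ is not RZ in $X$. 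Since open immersions are locally quasi-finite and identities are Riemann--Zariski, this fits into a Cartesian square of the form required by the lemma (take $Y = X$, $f = \mathrm{id}$, $X' = Y' = X_1$, $\pi_X$ the inclusion), where the lemma's conclusion holds trivially but your Step~1 would falsely assert $x$ is RZ in~$X$.

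The underlying problem is that when you try to lift a nontrivial horizontal specialization $y \rightsquigarrow y_2$ to $y' \rightsquigarrow y_2'$, there is no reason for $y_2'$ to land inside $Y' = Y \times_X X'$. A point of the fiber product is a compatible pair of points over $Y$ and $X'$, so lifting $y_2$ into $Y'$ requires a compatible horizontal specialization of $x'$ in $X'$ as well --- and you have produced no such thing. Convexity of $|Y'|$ does not help either: convexity closes under points \emph{between} two points of $|Y'|$, but $y_2'$ sits at the end of a chain with nothing in $|Y'|$ beyond it. This is precisely the issue the paper handles by arguing contrapositively: assume $x'$ is \emph{not} RZ, observe (via $f$ RZ and \cref{preimage_RZ}) that then $y$ is also not RZ, and combine the two nontrivial proper convex subgroups $H \subset \Gamma_y$ and $H' \subset \Gamma_{x'}$ into a proper convex subgroup $G \subset \Gamma_{y'}$. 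The valuation data over $G$ records specializations simultaneously over $Y$ and over $X'$, which by the Cartesian property produces a genuine nontrivial horizontal specialization of $y'$ inside $Y'$, contradicting $y'$ being RZ. Your proposal uses only the $Y$-side data and hence cannot cross the fiber product.
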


\begin{proof}
 Let $y' \in Y'$ be a Riemann-Zariski point and assume that $x'= f'(y')$ is not Riemann-Zariski.
 Then $y = \pi_Y(y')$ isn't a Riemann-Zariski point either.
 Otherwise $x=f(y)$ would be a Riemann-Zariski point due to~$f$ being a Riemann-Zariski morphism.
 But by the commutativity of diagram~(\ref{diagram_bc_RZ}) we have $x = \pi_X(x')$.
 Since~$\pi_X$ is locally quasi-finite, this would imply by \cref{preimage_RZ} that $x'$ is a Riemann-Zariski point.
 
 Then there are convex subgroups $H \subset \Gamma_y$ and $H' \subset \Gamma_{x'}$ such that $y_{\RZ} = y|_H$ and $x'_{\RZ} = x'|_{H'}$.
 Since neither of the two points is Riemann-Zariski, the subgroups are proper subgroups.
 The value groups $\Gamma_y$ and~$\Gamma_{x'}$ are naturally embedded into $\Gamma_{y'}$.
 The convex subgroup $G \subset \Gamma_{y'}$ generated by the images of~$H$ and~$H'$ is again a proper subgroup.
 We claim that $c\Gamma_{y'} \subseteq G$, i.e., the horizontal specialization $y'|_G$ exists.
 
 Let~$\p$ be the prime ideal of the valuation ring $k(y')^+$ corresponding to~$G$.
 We equip $k(y')^+_{\p}$ with the topology induced from $k(y')$.
 Let $S \subseteq \Spa(k(y')^+_{\p},k(y')^+)$ be the closure of the point~$y'$.
 The points in~$S$ correspond to the quotients of $k(y')^+$ by prime ideals contained in~$\p$.
 We also write~$S$ for the pseudoadic space
 \[
  (\Spa(k(y')^+_{\p},k(y')^+),S).
 \]
 By construction we obtain a diagram (of solid arrows)
 \[
  \begin{tikzcd}
   S	\ar[rrd]	\ar[ddr]	\ar[rd,dashed]		\\
												& Y'	\ar[r,"\pi_Y"]	\ar[d,"f'"']	& Y	\ar[d,"f"]	\\
												& X'	\ar[r,"\pi_X"']					& X
  \end{tikzcd}
 \]
 and thus the dotted arrow exists.
 It maps the generic point of~$S$ to~$y'$ and the closed point to some nontrivial horizontal specialization of~$y$.
 But this is not possible as~$y'$ is a Riemann-Zariski point.
\end{proof}

\subsection{The maximal Riemann-Zariski open}

For an \'etale morphism $U \to X$ of separated and quasi-compact adic spaces we define the following subset of~$U$:
\[
 U_{\RZ} = \{ u \in U \mid f(u_{\RZ})~\text{is a Riemann-Zariski point}\}
\]
This construction is functorial in the following sense.
If $V \to X$ is another \'etale morphism as above and $V \to U$ is a morphism over~$X$, we obtain a factorization
\[
 V_{\RZ} \to U_{\RZ}.
\]
We would expect~$U_{\RZ}$ to be an open subset of~$U$.
However, this is not always the case as the following example shows.

\begin{example} \label{U_RZ_not_open}
 Let~$k$ be a field and set $A = k[S,T]$.
 We consider the discretely ringed affinoid adic space $\Spa(A,A)$.
 Let~$X$ be the union of the two affinoid open subspaces
 \[
  X_1 = \Spa(A_S,A) = \{x \in \Spa(A,A) \mid |S(x)| \ne 0\}
 \]
 and
 \[
  X_2 = \Spa(A_T,A_T) = \{x \in \Spa(A,A) \mid 1 \le |T(X)|\}.
 \]
 Let $x \in X$ be the point corresponding to the valuation of the function field of~$A$ associated to the prime divisor $V(S)$.
 It is contained in~$X_1$ and~$X_2$.
 Let $x_1 \in X_1$ be the vertical specialization of~$x$ obtained by composing the valuation~$v_x$ with the valuation of the function field of~$V(S)$ corresponding to the prime divisor $V(S,T)$ of $V(S)$.
 It is contained in~$X_1$ but not in~$X_2$.
 Let $x_2 \in X_2$ be the horizontal specialization of~$x$ corresponding to the trivial valuation of the function field of $V(S)$.
 We now consider the open embedding $X_1 \to X$.
 Obviously, $x_1 \in X_{1,\RZ}$ but $x \notin X_{1,\RZ}$ because $x$ is a Riemann-Zariski point of~$X_1$ but not a Riemann-Zariski point of~$X$ as it has the horizontal specialization~$x_2$ in~$X$.
 This shows that~$X_{1,\RZ}$ cannot be open in~$X_1$ since it contains~$x_1$ but not its generalization~$x$.
\end{example}

The decisive property the adic space in \cref{U_RZ_not_open} lacks is the following:

\begin{definition}
 An adic space~$X$ is square complete if for any three points $x$, $x_1$, and~$x_2$ in~$X$ such that~$x_1$ is a horizontal and~$x_2$ a vertical specialization of~$x$, there exists a unique point $x' \in X$ which is a vertical specialization of~$x_1$ and a horizontal specialization of~$x_2$:
 \[
  \begin{tikzcd}
   x_2	\ar[r,rightsquigarrow]							& x'			\\
   x	\ar[r,rightsquigarrow]	\ar[u,rightsquigarrow]	& x_1.	\ar[u,rightsquigarrow]
  \end{tikzcd}
 \]

\end{definition}

All analytic adic spaces are square complete since there are no nontrivial horizontal specializations.
Also affinoid adic spaces are square complete by \cite{HK94}, Lemma~1.2.5~i).
In fact, by the same argument as for affinoid adic spaces, all discretely ringed adic spaces of the form $\Spa(X,S)$ for a separated morphism of schemes $X \to S$ are square complete.

We will see below that~$U_{\RZ}$ is a quasi-compact open subset of~$U$ if~$X$ is square complete.
In order to prove this, we need some preparation.

\begin{lemma} \label{specialization_RZ}
 Suppose in a separated, quasi-compact, square complete adic space~$X$ a point $x \in X$ specializes to a Riemann-Zariski point $x'$ (not necessarily horizontally).
 Let
 \[
  x \rightsquigarrow x_1 \rightsquigarrow x_2 \rightsquigarrow x'
 \]
 be a decomposition, where $x \rightsquigarrow x_1$ is a horizontal specialization, $x_1 \rightsquigarrow x_2$ is a schematic specialization (or $x_1 = x_2$), and $x_2 \rightsquigarrow x'$ is a vertical specialization.
 Then $x_1 = x_{\RZ}$.
\end{lemma}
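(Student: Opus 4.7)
The plan is to split on whether $x_1$ is trivial or not. If $x_1$ is a trivial valuation then it is automatically a Riemann-Zariski point (as noted immediately after the definition of RZ point, since a trivial value group has no proper convex subgroups), and the uniqueness of the RZ horizontal specialization of $x$ coming from \cref{separated_horizontal} combined with \cref{qc_horizontal} forces $x_1 = x_{\RZ}$. The substantive case is when $x_1$ is nontrivial: by the observation that $z_1$ nontrivial forces $z_1 = z_2$ in the horizontal-schematic-vertical factorization, we must have $x_1 = x_2$, so that $x_1 \rightsquigarrow x'$ is a pure vertical specialization.

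Next, I would use square completeness to promote the horizontal specialization $x_1 \rightsquigarrow x_{\RZ}$ together with the vertical specialization $x_1 \rightsquigarrow x'$ into a vertical specialization $x_{\RZ} \rightsquigarrow x'$. Since the horizontal specializations of $x$ are totally ordered (\cref{separated_horizontal}) and $x_{\RZ}$ is the maximal one, $x_1 \rightsquigarrow x_{\RZ}$ is itself horizontal. Applying the square completeness of $X$ to $x_1$ with horizontal specialization $x_{\RZ}$ and vertical specialization $x'$ yields a unique $y \in X$ that is simultaneously a vertical specialization of $x_{\RZ}$ and a horizontal specialization of $x'$. Since $x'$ is Riemann-Zariski, its only horizontal specialization is itself, so $y = x'$; hence $x_{\RZ} \rightsquigarrow x'$ is vertical.

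The concluding step is a support computation inside a suitably chosen affinoid chart. Pick an affinoid open $U = \Spa(A,A^+) \subseteq X$ containing $x'$; since open subsets of a spectral space are closed under generalization and both $x_1$ and $x_{\RZ}$ are generalizations of $x'$, both lie in $U$. In $U$, vertical specializations preserve support, giving $\supp x_1 = \supp x' = \supp x_{\RZ}$ as prime ideals of $A$. On the other hand, writing the horizontal specialization $x_1 \rightsquigarrow x_{\RZ}$ by means of the convex subgroup $H = \Gamma_{x_{\RZ}} \subseteq \Gamma_{x_1}$, one has $\supp x_{\RZ} = \{a \in A : x_1(a) \notin H\}$. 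Equating this with $\supp x_1 = \{a : x_1(a) = 0\}$ forces every nonzero value of $x_1$ on $A$ to lie in $H$, so $H = \Gamma_{x_1}$, meaning the horizontal specialization $x_1 \rightsquigarrow x_{\RZ}$ is trivial and $x_1 = x_{\RZ}$.

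The step I expect to be most delicate is the passage to a common affinoid chart, since the support comparison requires all three of $x_1, x_{\RZ}, x'$ to sit inside a single $\Spa(A,A^+)$. The saving observation is that the chart chosen around $x'$ automatically captures $x_1$ and $x_{\RZ}$, because they are generalizations of $x'$ and open subsets of a spectral space are closed under generalization; this avoids any separate casework on whether $x_{\RZ}$ is trivial.
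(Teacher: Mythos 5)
Your proof is correct and follows the paper's argument exactly: the trivial vs.\ nontrivial case split for $x_1$, the observation that nontriviality forces $x_1 = x_2$ so that $x_1 \rightsquigarrow x'$ is purely vertical, and the application of square completeness to the pair $x_1 \rightsquigarrow x_{\RZ}$ (horizontal) and $x_1 \rightsquigarrow x'$ (vertical) with $x'$ Riemann--Zariski. Your concluding support computation in an affinoid chart around~$x'$ is precisely the detail hidden behind the paper's terse final sentence ``This also implies $x_1 = x_{\RZ}$,'' and is a welcome elaboration.
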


\begin{proof}
 Since the horizontal specializations are totally ordered by \cref{separated_horizontal}, $x_{\RZ}$ is a horizontal specialization of~$x_1$.
 If~$x_1$ corresponds to a trivial valuation, it is a Riemann-Zariski point and thus $x_1 = x_{\RZ}$.
 If it is non-trivial, there is no schematic specialization, and $x_1 = x_2$.
 As~$X$ is square complete, there is a unique point $x''$ such that $x_{\RZ} \rightsquigarrow x''$ vertically and $x' \rightsquigarrow x''$ horizontally.
 Since~$x'$ is a Riemann-Zariski point, $x' = x''$.
 This also implies $x_1 = x_{\RZ}$.
\end{proof}

\begin{lemma} \label{cover_source}
 Let $f:U \to X$ be an \'etale morphism of separated, quasi-compact adic spaces.
 Let $U = \bigcup_i U_i$ be an open cover.
 Then
 \[
  U_{\RZ} = \bigcup_i U_{i,\RZ}.
 \]
\end{lemma}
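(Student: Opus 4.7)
The plan is to check both inclusions $\bigcup_i U_{i,\RZ} \subseteq U_{\RZ}$ and $U_{\RZ} \subseteq \bigcup_i U_{i,\RZ}$ pointwise, relying on two ingredients: the fact that the étale (hence locally quasi-finite) map $f$ admits no non-trivial specialization relations within a fiber (cf.~\cref{preimage_RZ}), and the elementary topological fact that open subsets are stable under generalization.

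For the inclusion $\bigcup_i U_{i,\RZ} \subseteq U_{\RZ}$, I would take $u \in U_{i,\RZ}$ and let $v := u_{\RZ,U_i}$ be its Riemann-Zariski horizontal specialization computed inside $U_i$, so that $f(v)$ is a Riemann-Zariski point of $X$ by hypothesis. The key step is to upgrade ``$v$ is Riemann-Zariski in $U_i$'' to ``$v$ is Riemann-Zariski in $U$''. Suppose $w \in U$ is a horizontal specialization of $v$; then $f(w)$ is a horizontal specialization of the Riemann-Zariski point $f(v)$, which forces $f(w) = f(v)$. As no two distinct points in a fiber of an étale map are comparable under specialization, this gives $w = v$. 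Hence $v$ is Riemann-Zariski in $U$, so by uniqueness $v = u_{\RZ}$ and $f(u_{\RZ}) = f(v)$ is Riemann-Zariski, giving $u \in U_{\RZ}$.

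For the reverse inclusion, I would start with $u \in U_{\RZ}$, set $v := u_{\RZ} \in U$, and pick an index $j$ with $v \in U_j$. Since $U_j$ is open and $u$ is a generalization of $v$, stability of opens under generalization yields $u \in U_j$ as well. Any horizontal specialization of $v$ in $U_j$ is in particular a horizontal specialization in $U$, so $v$ remains Riemann-Zariski in $U_j$, \ie $v = u_{\RZ,U_j}$. Consequently $f(u_{\RZ,U_j}) = f(u_{\RZ})$ is Riemann-Zariski in $X$, giving $u \in U_{j,\RZ}$.

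The only genuinely delicate point is the first direction, where étaleness is essential to rule out stray horizontal specializations in $U \setminus U_i$ that would otherwise prevent the identification $v = u_{\RZ}$; the rest amounts to bookkeeping with the total order on horizontal specializations supplied by \cref{separated_horizontal,qc_horizontal}.
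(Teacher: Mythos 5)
Your proof is correct and follows the same strategy as the paper's: for $U_{i,\RZ} \subseteq U_{\RZ}$ you show that $u_{\RZ,U_i}$ is Riemann-Zariski in $U$ (you unfold the argument of \cref{preimage_RZ} rather than citing it, but it is the same argument), and for the reverse inclusion you locate $u_{\RZ,U}$ in some $U_j$ and observe that $u$ lies there too and that $u_{\RZ,U}$ stays Riemann-Zariski in $U_j$. The only difference from the paper is that you spell out a few steps (the stability of opens under generalization, the identification $u_{\RZ,U_i} = u_{\RZ,U}$) that the paper leaves implicit.
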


\begin{proof}
 In order to show the inclusion $U_{i,\RZ} \subseteq U_{\RZ}$, take a point $x \in U_{i,\RZ}$.
 Then $x_{\RZ,U_i}$ (computed in~$U_i$) maps to a Riemann-Zariski point of~$X$.
 Now we view~$x_{\RZ,U_i}$ as a point of~$U$ and apply \cref{preimage_RZ} to conclude that it is a Riemann-Zariski point of~$U$, as well.
 This shows $x \in U_{\RZ}$.
 
 Now let $x$ be an element of~$U_{\RZ}$.
 Then $x_{\RZ,U}$ is contained in one of the~$U_i$'s.
 Also~$x$ is contained in this~$U_i$ and $x_{\RZ,U}$ is the Riemann-Zariski point of~$x$ computed in~$U_i$.
 By assumption~$x_{RZ,U}$ maps to a Riemann-Zariski point of~$X$.
 Hence, $x \in U_{i,\RZ}$.
\end{proof}

\begin{lemma} \label{bijection_horizontal}
 Let $f: U \to X$ be an \'etale morphism of separated, quasi-compact adic spaces and $u \in U$ a point.
 Then~$f$ induces a bijection of the horizontal specializations of~$u$ with the horizontal specializations of~$f(u)$ dominated by $f(u_{\RZ})$.
\end{lemma}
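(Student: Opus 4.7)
The plan is to reduce the statement to a purely combinatorial comparison of convex subgroups of the respective value groups. Since the claim concerns horizontal specializations of $u$ and $f(u)$ only, and horizontal specializations in a quasi-compact separated adic space only see the behavior on any affinoid neighborhood that realizes the relevant characteristic subgroup, I first shrink $X$ to an affinoid open neighborhood $V = \Spa(B,B^+)$ of $f(u)$ such that $c\Gamma_{f(u)}$ is computed from $V$, and then shrink $U$ to an affinoid open $W = \Spa(A,A^+) \subseteq f^{-1}(V)$ containing $u$ and computing $c\Gamma_u$. In this affinoid setting, as recalled in the proofs of \cref{separated_horizontal} and \cref{qc_horizontal}, the horizontal specializations of a valuation $v$ correspond bijectively to the convex subgroups of $\Gamma_v$ containing its characteristic subgroup, with specialization order reversed with respect to inclusion.

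Because $f$ is étale, the residue field extension $k(u)/k(f(u))$ is finite separable; hence the induced inclusion $\Gamma_{f(u)} \hookrightarrow \Gamma_u$ has finite (torsion) cokernel. Intersection with $\Gamma_{f(u)}$ therefore gives an order-preserving bijection between the convex subgroups of $\Gamma_u$ and those of $\Gamma_{f(u)}$, whose inverse sends $H' \subseteq \Gamma_{f(u)}$ to its convex hull in $\Gamma_u$. Moreover this bijection matches the action of $f$ on horizontal specializations: restricting the valuation $u|_H$ to $k(f(u))$ yields a valuation with value group $H \cap \Gamma_{f(u)}$, i.e.\ $f(u|_H) = f(u)|_{H \cap \Gamma_{f(u)}}$.

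The remaining content of the statement is to cut out the correct image of this map when we restrict to convex subgroups actually giving horizontal specializations. A convex subgroup $H$ of $\Gamma_u$ gives a horizontal specialization of $u$ in $W$ iff $H \supseteq c\Gamma_u$, and its image $H \cap \Gamma_{f(u)}$ then automatically contains $c\Gamma_u \cap \Gamma_{f(u)}$, which by the formula in the previous paragraph is precisely $f(u_{\RZ})$. The condition ``dominated by $f(u_{\RZ})$'' on $y = f(u)|_{H'}$ translates, via the order-reversal between specialization and inclusion of convex subgroups, into the condition $H' \supseteq c\Gamma_u \cap \Gamma_{f(u)}$; conversely any such $H'$ is of the form $H \cap \Gamma_{f(u)}$ for a unique convex $H \supseteq c\Gamma_u$, obtained as the convex hull of $H'$ in $\Gamma_u$. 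This yields the claimed bijection.

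The main obstacle is the identity $c\Gamma_u \cap \Gamma_{f(u)} = $ ``characteristic subgroup determining $f(u_{\RZ})$'' — that is, the interplay of the two topologies (on $A$ and $B$) with the convex structure. I would handle this by using Huber's local structure theorem for étale morphisms to factor $W \to V$ as a finite étale morphism followed by an open immersion and treating the two cases separately: in the finite étale case the topology on $A$ is induced from $B$, so topologically nilpotent elements have values in the convex hull of $c\Gamma_{f(u)}$ inside $\Gamma_u$; in the open immersion case $\Gamma_u = \Gamma_{f(u)}$ and the comparison reduces to tracing which rational localization defines $W$ inside $V$, together with the identification of $f(u_{\RZ})$ obtained in the discussion preceding \cref{subsection_RZ_morphisms}.
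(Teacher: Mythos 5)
Your proof follows essentially the same route as the paper's: shrink $X$ and $U$ to affinoid opens containing the relevant Riemann--Zariski points, encode horizontal specializations as convex subgroups of the value group, and cut out the image of $f$ using $f(u_{\RZ})$. On one point you are actually more careful than the paper. The paper asserts that ``the value groups of $u$ and $f(u)$ are the same,'' but for a merely étale (as opposed to strongly étale) morphism the inclusion $\Gamma_{f(u)}\hookrightarrow\Gamma_u$ can be proper with finite cokernel. Your observation that $\Gamma_u/\Gamma_{f(u)}$ is torsion is the right level of precision here, and the resulting bijection $H\mapsto H\cap\Gamma_{f(u)}$ of convex subgroups, with inverse given by the convex hull, is exactly what the argument needs.

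However, your final paragraph misidentifies the ``main obstacle.'' The identity $\Gamma_{f(u_{\RZ})}=c\Gamma_u\cap\Gamma_{f(u)}$ involves no interplay between the topologies on $A$ and $B$ and needs no appeal to Huber's local structure theorem: it is an immediate instance of the formula $f(u|_H)=f(u)|_{H\cap\Gamma_{f(u)}}$ that you already recorded, applied with $H=c\Gamma_u$ (restriction of $u_{\RZ}=u|_{c\Gamma_u}$ to the subfield $k(f(u))$). And the containment $c\Gamma_u\cap\Gamma_{f(u)}\supseteq c\Gamma_{f(u)}$, which is what one actually needs in order to place $f(u_{\RZ})$ among the horizontal specializations of $f(u)$ in $V$, comes for free: $u_{\RZ}$ is a point of $W$, so its image $f(u_{\RZ})$ is a point of $V$, and any horizontal specialization of $f(u)$ in an affinoid corresponds to a convex subgroup containing the characteristic subgroup $c\Gamma_{f(u)}$. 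There is no need to factor $W\to V$ into an open immersion and a finite étale piece, and no need to control $c\Gamma_u$ in terms of $c\Gamma_{f(u)}$. If you delete that paragraph the proof is already complete.
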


\begin{proof}
 Without loss of generality, we may replace~$X$ and~$U$ by open subspaces~$X'$ and~$U'$ containing $f(u)_{\RZ}$ and~$u_{\RZ}$, respectively, such that $f(U') \subseteq X'$.
 In this way we reduce to the case where~$U$ and~$X$ are affinoid.
 Since~$f$ is \'etale, the value groups of~$u$ and~$f(u)$ are the same.
 Moreover, the characteristic subgroup of~$u$ is the value group of~$u_{\RZ}$, which in turn coincides with the value group of $f(u_{\RZ})$.
 Hence, there is a one-to-one correspondence between the horizontal specializations of~$u$ in~$U$ and of~$f(u)$ in~$X$ dominated by $f(u_{\RZ})$.
\end{proof}

\begin{lemma} \label{RZ_point_subset}
 Let $f: U \to X$ be an \'etale morphism of separated, quasi-compact, and square complete adic spaces.
 Take an open subset $X'$ of~$X$ and set $U' = U \times_X X'$.
 Let $u' \in U'$ be a Riemann-Zariski point and let $u \in U$ be be a specialization of~$u'$.
 If $f(u)$ is a Riemann-Zariski point of~$X$, then $f(u')$ is a Riemann-Zariski point of~$X'$.
\end{lemma}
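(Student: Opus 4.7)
The plan is to argue by contradiction.  Suppose that $f(u')$ admits a nontrivial horizontal specialization $\bar{x}'\ne f(u')$ in~$X'$; I will produce a nontrivial horizontal specialization of~$u'$ in~$U'$, contradicting the hypothesis that~$u'$ is Riemann-Zariski in~$U'$.

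The crucial preliminary identification is $f(u'_{\RZ,U}) = f(u')_{\RZ,X}$.  Since~$f$ is étale, hence locally quasi-finite, \cref{preimage_RZ} shows $u$ is Riemann-Zariski in~$U$.  I would then decompose the specialization $u' \rightsquigarrow u$ inside the square complete space~$U$ as $u' \rightsquigarrow u_1 \rightsquigarrow u_2 \rightsquigarrow u$, with the three arrows respectively horizontal, schematic (or an equality), and vertical, and apply \cref{specialization_RZ} to conclude $u_1 = u'_{\RZ,U}$.  Pushing the decomposition forward along~$f$ yields a decomposition $f(u')\rightsquigarrow f(u_1)\rightsquigarrow f(u_2)\rightsquigarrow f(u)$ of the same type in~$X$, since étale morphisms induce isomorphisms on value groups and respect the horizontal/schematic/vertical nature of each factor.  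Because $f(u)$ is Riemann-Zariski in~$X$ and~$X$ is square complete, a second application of \cref{specialization_RZ} forces $f(u_1) = f(u')_{\RZ,X}$; combined with the previous equality this gives $f(u'_{\RZ,U}) = f(u')_{\RZ,X}$.

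With this identification in hand, apply \cref{bijection_horizontal} to~$f$ at~$u'$: it puts the horizontal specializations of~$u'$ in~$U$ in bijection with the horizontal specializations of $f(u')$ in~$X$ dominated by $f(u'_{\RZ,U})$.  Since $f(u'_{\RZ,U}) = f(u')_{\RZ,X}$ is the deepest element of the chain of horizontal specializations of $f(u')$ in~$X$ (totally ordered by \cref{separated_horizontal}), the bijection exhausts every horizontal specialization of $f(u')$ in~$X$.  Therefore~$\bar{x}'$, which is also a horizontal specialization of $f(u')$ in the ambient~$X$, has the form $f(\tilde{u})$ for a unique nontrivial horizontal specialization~$\tilde{u}$ of~$u'$ in~$U$.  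Since $\bar{x}'\in X'$ and $U'=f^{-1}(X')$, the point~$\tilde{u}$ lies in~$U'$ and hence defines a continuous valuation on the structure sheaf of~$U'$; being the restriction of~$u'$ to a proper convex subgroup of~$\Gamma_{u'}$, it is then a nontrivial horizontal specialization of~$u'$ in~$U'$, which is the desired contradiction.

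The main obstacle is the identification $f(u'_{\RZ,U}) = f(u')_{\RZ,X}$: a priori the image under an étale morphism of a Riemann-Zariski point of~$U$ need not be Riemann-Zariski in~$X$ (\cref{U_RZ_not_open} is exactly such a failure), and it is only the presence of a specialization $u'\rightsquigarrow u$ with $f(u)$ Riemann-Zariski in~$X$ that pins it down, through the two applications of \cref{specialization_RZ}.
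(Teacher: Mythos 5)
Your proposal is correct and follows essentially the same route as the paper's own proof: decompose $u'\rightsquigarrow u$ into horizontal/schematic/vertical pieces, use \cref{preimage_RZ} and two applications of \cref{specialization_RZ} to pin down $f(u'_{\RZ,U}) = f(u')_{\RZ,X}$, then invoke \cref{bijection_horizontal} to pull a horizontal specialization of $f(u')$ back into $U' = f^{-1}(X')$ and conclude from $u'$ being Riemann--Zariski there. The only cosmetic difference is that you argue by contradiction with an arbitrary nontrivial horizontal specialization $\bar{x}'$, whereas the paper argues directly with $f(u')_{\RZ,X'}$ itself.
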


\begin{proof}
 In~$U$ we can decompose the specialization $u' \rightsquigarrow u$ as
 \[
  u' \rightsquigarrow u_1 \rightsquigarrow u_2 \rightsquigarrow u,
 \]
 where $u' \rightsquigarrow u_1$ is a horizontal specialization, $u_1 \rightsquigarrow u_2$ is a schematic specialization, and $u_2 \rightsquigarrow u$ is a vertical specialization (see \cite{HK94}, Proposition~1.2.4).
 By \cref{preimage_RZ}, $u$ is a Riemann-Zariski point of~$U$.
 Therefore, \cref{specialization_RZ} implies that $u'_{\RZ,U} = u_1$
 The above chain of specializations maps to a chain of specializations
 \[
  f(u') \rightsquigarrow f(u_1) \rightsquigarrow f(u_2) \rightsquigarrow f(u).
 \]
 of the same kind and by the same reason as before we have $f(u_1) = f(u')_{\RZ,X}$.
 The Riemann-Zariski point of~$X'$ associated with~$f(u')$ is a horizontal specialization of~$f(u')$.
 By \cref{bijection_horizontal} it corresponds to a horizontal specialization~$u''$ of~$u'$, i.e. $f(u'') = f(u')_{\RZ,X'}$
 As $f^{-1}(X') = U'$, we know that $u'' \in U'$.
 But~$u'$ is a Riemann-Zariski point of~$U'$ and thus $u'' = u'$ and $f(u')_{\RZ,X'} = f(u')$.
\end{proof}

\begin{lemma} \label{U_RZ_local_base}
 Let $f: U \to X$ be an \'etale morphism of separated, quasi-compact adic spaces.
 Let $X = \bigcup_i X_i$ be an open cover and set $U_i = X_i \times_X U$.
 Then an open subset $V \subseteq U$ is contained in $U_{\RZ}$ if and only if $V \cap U_i$ is contained in $U_{i,\RZ}$ (defined with respect to $f_i : U_i \to X_i$) for all~$i$.
\end{lemma}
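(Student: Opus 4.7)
The plan is to prove the two implications separately. In each direction I reason pointwise and derive a contradiction from the existence of an unwanted horizontal specialization; the forward direction uses \cref{bijection_horizontal} to lift specializations from $X$ to $U$, while the backward direction exploits that open subsets of adic spaces are closed under generalization.

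For the forward implication, assume $V \subseteq U_\RZ$ and pick $u \in V \cap U_i$. Write $y := u_{\RZ,U}$ and $y_i := u_{\RZ,U_i}$; the hypothesis gives that $f(y)$ is a Riemann--Zariski point of $X$. To see that $f(y_i)$ is Riemann--Zariski in $X_i$, suppose for contradiction that it admits a non-trivial horizontal specialization $w \in X_i$. By \cref{separated_horizontal} the horizontal specializations of $f(u)$ in $X$ form a chain, and by hypothesis this chain terminates at $f(y)$; consequently $w$ sits strictly between $f(y_i)$ and $f(y)$, so in particular $w \rightsquigarrow f(y) = f(u_{\RZ,U})$, and $w$ is a horizontal specialization of $f(u)$ in $X$ ``dominated by $f(u_{\RZ,U})$''. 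Invoking \cref{bijection_horizontal} at $u$ for $f \colon U \to X$, we lift $w$ to a horizontal specialization $w'$ of $u$ in $U$ with $f(w') = w$. Since $f(w') \in X_i$, we have $w' \in U_i$; so $w'$ is a horizontal specialization of $u$ in $U_i$ and must satisfy $w' \rightsquigarrow y_i$ because $y_i$ is the Riemann--Zariski point of $u$ in $U_i$. Applying $f$ gives $w \rightsquigarrow f(y_i)$; combined with the starting $f(y_i) \rightsquigarrow w$ the total ordering of the horizontal specialization chain forces $w = f(y_i)$, contradicting its non-triviality.

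For the backward implication, assume $V \cap U_i \subseteq U_{i,\RZ}$ for every $i$, fix $u \in V$, and set $y := u_{\RZ,U}$. Suppose $w \in X$ were a non-trivial horizontal specialization of $f(y)$, and choose any index $j$ with $w \in X_j$. Since $X_j$ is open in $X$ it is closed under generalization, and both $f(y)$ and $f(u)$ are generalizations of $w$, so $f(u), f(y) \in X_j$ and hence $u, y \in U_j$. Because $y$ is Riemann--Zariski in $U$ it is a fortiori Riemann--Zariski in the open subspace $U_j$, giving $u_{\RZ, U_j} = y$. The hypothesis $u \in V \cap U_j \subseteq U_{j,\RZ}$ then says $f(y)$ is Riemann--Zariski in $X_j$, which directly contradicts the existence of $w \in X_j$.

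The step I expect to be most delicate is the application of \cref{bijection_horizontal} in the forward direction: one must check that $w$ really satisfies the ``dominated by $f(u_{\RZ,U})$'' hypothesis of that lemma, and this relies crucially on the global assumption $V \subseteq U_\RZ$, which ensures that $f(y)$ is terminal in the chain of horizontal specializations of $f(u)$ in $X$ and hence forces $w$ to lie before it. The backward direction is conceptually simpler, reducing to the elementary fact that open subsets of adic spaces contain all generalizations of any of their points.
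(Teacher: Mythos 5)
Your proof is correct and takes essentially the same route as the paper: the forward direction uses \cref{bijection_horizontal} to lift a putative horizontal specialization of $f(u_{\RZ,U_i})$ (you lift from $u$ while the paper lifts from $u_{\RZ,U_i}$, but this is an inessential variant), and the backward direction reduces to the observation that open subsets are closed under generalization, just as in the paper. The only cosmetic difference is that you phrase both directions as a contradiction from a hypothetical nontrivial horizontal specialization while the paper argues directly.
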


\begin{proof}
 Suppose $V \subseteq U_{\RZ}$ and let us show $V \cap U_i \subseteq U_{i,\RZ}$.
 Take $u \in V \cap U_i$.
 By assumption $f(u_{\RZ,U})$ is a Riemann-Zariski point of~$X$.
 We have a chain of horizontal specializations
 \[
  f(u) \rightsquigarrow f(u_{\RZ,U_i}) \rightsquigarrow f(u_{\RZ,U})
 \]
 By \cref{bijection_horizontal}, any horizontal specialization of~$f(u_{\RZ,U_i})$ in~$X_i$ corresponds to a horizontal specialization~$u'$ of $u_{\RZ,U_i}$ in~$U$.
 But since $f^{-1}(X_i) = U_i$, we must have $u' \in U_i$ and thus $u' = u_{\RZ,U_i}$.
 This shows that $f(u_{\RZ,U_i})$ is a Riemann-Zariski point of~$X_i$.
 In other words, $u \in U_{i,\RZ}$.
 
 Now assume that $V \cap U_i \subseteq U_{i,\RZ}$ for all~$i$.
 For $u \in V$ we need to show that $f(u_{\RZ,U})$ is a Riemann-Zariski point.
 There is some~$i$ such that $f(u)_{\RZ,X} \in X_i$.
 Then also $u_{\RZ,U} \in U_i$, hence $u_{\RZ,U} = u_{\RZ,U_i}$.
 We obtain a chain of horizontal specializations in~$X_i$:
 \[
  f(u) \rightsquigarrow f(u_{\RZ,U_i}) = f(u_{\RZ,U}) \rightsquigarrow f(u)_{\RZ,X_i} = f(u)_{\RZ,X}.
 \]
 By assumption $f(u_{\RZ,U_i})$ is a Riemann-Zariski point of~$X_i$.
 This implies $f(u_{\RZ,U}) = f(u)_{\RZ,X}$.
\end{proof}

\begin{proposition} \label{U_RZ_open}
 Let $f: U \to X$ be an \'etale morphism of separated and quasi-compact adic spaces and suppose that~$X$ is square complete.
 Then $U_{\RZ}$ is a quasi-compact open subset of~$U$.
\end{proposition}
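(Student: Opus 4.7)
The plan is to establish openness of $U_{\RZ}$ first; quasi-compactness will then follow from a finite covering argument. A first reduction is to the case where $U$ is affinoid: by \cref{cover_source}, $U_{\RZ}=\bigcup_i U_{i,\RZ}$ for any affinoid open cover $U=\bigcup_i U_i$ with $U_{i,\RZ}$ defined relative to $f|_{U_i}$, so if each $U_{i,\RZ}$ is open in $U_i$ then it is open in $U$. Hence we may assume $U$ is affinoid (while $X$ remains separated, quasi-compact, and square complete).

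Now fix $u\in U_{\RZ}$; set $u_0:=u_{\RZ,U}$ and $x_0:=f(u_0)\in X_{\RZ}$. The idea is to pick an affinoid open $W\subseteq X$ containing $x_0$ so that the horizontal specialization structure near $x_0$ is captured inside $W$. Since $W$ is open (hence closed under generalization) and every horizontal specialization of $f(u)$ is a generalization of $x_0$, all horizontal specializations of $u$ lie in $U':=f^{-1}(W)$; in particular $u_{\RZ,U'}=u_0$. Picking an affinoid open $V\subseteq U'$ containing $u$, the étale map $V\to W$ is now a map of affinoids. Using \cref{bijection_horizontal} to identify horizontal specializations on both sides via convex subgroups of the common value group, combined with \cref{RZ_point_subset}, one shows that the locus $\{v\in V : f(v_{\RZ,V}) \text{ is a Riemann-Zariski point of } W\}$ is an open subset of $V$ containing~$u$.

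The main step, and the main obstacle, is to verify that this locus agrees with $V\cap U_{\RZ}$, that is, that being RZ relative to $V\to W$ coincides with being RZ relative to $U\to X$ on a sufficiently small neighborhood of~$u$. The passage requires two ingredients: that the horizontal specializations of $v\in V$ remain in $V$ (so that $v_{\RZ,V}=v_{\RZ,U}$), handled by the same reasoning used for $u_0$ after possibly shrinking~$V$; and that an RZ point of $W$ of the form $f(v_{\RZ,V})$ is also RZ in~$X$. The latter is where square completeness of $X$ is crucial: via the horizontal-then-schematic-then-vertical decomposition of \cref{specialization_RZ}, the horizontal part of any specialization to an RZ point must terminate at the canonical $X$-RZ point, so any stray nontrivial horizontal specialization in $X\setminus W$ would produce a configuration of specializations that square completeness rules out. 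This is precisely the pathology illustrated by \cref{U_RZ_not_open} in the non-square-complete setting. Finally, quasi-compactness follows since $U_{\RZ}$ is covered by finitely many quasi-compact opens of the form $V\cap U_{\RZ}$ coming from a finite affinoid cover of~$U$.
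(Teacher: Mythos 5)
Your overall plan (reduce to $U$ affinoid via \cref{cover_source}, localize on $X$, then argue quasi-compactness) has the right shape, but the central step contains a genuine gap, and the quasi-compactness conclusion is unsupported.

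The gap is the passage from the locus $\Lambda=\{v\in V : f(v_{\RZ,V})\text{ is RZ in }W\}$ to $V\cap U_{\RZ}$. You are implicitly trying to detect whether $f(v_{\RZ,U})$ is RZ \emph{in $X$} by testing RZ-ness \emph{in a single affinoid} $W$ chosen to contain $x_0=f(u_{\RZ,U})$ for the one base point $u$. This cannot work in general: being RZ in an open subset $W$ does not imply being RZ in $X$, even when $X$ is square complete. For instance, with $X=\Spa(k[t],k)$ (affinoid, hence square complete) and $W=\{|t|\ne 0\}$, the $t$-adic point $y$ is RZ in $W$ (because $t$ becomes a unit on $W$, so the characteristic subgroup of $\Gamma_y$ computed over $W$ is all of $\Gamma_y$) but $y$ has the nontrivial horizontal specialization to the trivial valuation at $(t)$, which lies outside $W$. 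So square completeness does not ``rule out stray horizontal specializations in $X\setminus W$'' -- it has nothing to say about a single chosen $W$. Moreover, for $v\ne u$ near $u$, the RZ point $f(v)_{\RZ,X}$ need not lie in $W$ at all, so you have no control over the difference between $v_{\RZ,V}$ and $v_{\RZ,U}$. The paper avoids these problems by taking a finite affinoid cover $X=\bigcup_i X_i$, setting $Z_i=U_i\setminus U_{i,\RZ}$, and identifying $U_{\RZ}=U\setminus\bigcup_i\bar Z_i$; the forward inclusion uses \cref{U_RZ_local_base} (which is formulated over the \emph{whole} cover, not one piece), and the reverse inclusion uses the horizontal--schematic--vertical decomposition, \cref{specialization_RZ} (this is where square completeness enters, in a precise way), \cref{preimage_RZ}, and \cref{RZ_point_subset}. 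This global complementation argument is essential and is not reproducible by a pointwise argument with a single $W$.

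Two further problems. First, openness of the affinoid-to-affinoid locus is asserted but not proved: the paper's treatment of the affinoid discretely ringed case is not elementary -- it cites \cite{HueSch20}, Lemma 12.10, and then transfers to the non-discrete case via the map $\Spa(A,A^+)\hookrightarrow\Spa(A,A^+)_{\disc}$ and \cref{specialization_continuous}. Second, your quasi-compactness argument is circular: you claim $U_{\RZ}$ is covered by finitely many quasi-compact opens $V\cap U_{\RZ}$, but neither the quasi-compactness of these pieces nor the finiteness of the cover is established. The paper instead observes that the RZ points of $U_{\RZ}$ are exactly $f^{-1}(S)$ where $S$ is the set of RZ points of $X$, which is quasi-compact by \cref{RZ_qc}; then, since every nonempty closed subset of $U_{\RZ}$ contains an RZ point (\cref{qc_horizontal}), any open cover whose finitely many members cover $f^{-1}(S)$ covers all of $U_{\RZ}$.
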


\begin{proof}
 We first show that~$U_{\RZ}$ is open.
 Let us start with the case where~$U$ and~$X$ are affinoid and discretely ringed, i.e., $X = \Spa(A,A^+)$ and $U = \Spa(B,B^+)$ and $A$ and~$B$ carry the discrete topology.
 Since $u \in U_{\RZ}$ if and only if $u_{\RZ} \in U_{\RZ}$, it suffices to construct for each Riemann-Zariski point~$u$ of~$U$ an open neighborhood that is contained in~$U_{\RZ}$.
 Using \cite{HueSch20}, Lemma~12.10, we construct an open affinoid neighborhood $V' \subseteq U$ of~$u$ such that $V' \to X$ is a Riemann-Zariski morphism.
 In particular, $V \subset U_{\RZ}$.
 
 Let us now lift the restriction that the topology of~$A$ and~$B$ be discrete.
 We denote by $(A,A^+)_{\disc}$ and $(B,B^+)_{\disc}$ the same Huber pairs but equipped with the discrete topology.
 The homomorphism of Huber pairs $(A,A^+)_{\disc} \to (A,A^+)$ induces a continuous inclusion
 \[
  \iota_: X = \Spa(A,A^+) \hookrightarrow \Spa(A,A^+)_{\disc} =: X_{\disc}
 \]
 and similarly for~$U$.
 We obtain a commutative diagram
 \[
  \begin{tikzcd}
   U	\ar[r,hookrightarrow,"\iota_U"]	\ar[d,"f"']	& U_{\disc}	\ar[d,"f_{\disc}"]	\\
   X	\ar[r,hookrightarrow,"\iota_X"]				& X_{\disc}.
  \end{tikzcd}
 \]
 It follows from \cref{specialization_continuous} that a point $x \in X$ is Riemann-Zariski if and only if $\iota_X(x)$ is Riemann-Zariski in~$X_{\disc}$ and similarly for~$U$.
 Therefore,
 \[
  U_{\RZ} = \iota_U^{-1}(U_{\disc,\RZ}) 
 \]
 But $U_{\disc,\RZ}$ is open, hence also $U_{\RZ}$.
 
 If~$X$ is affinoid and~$U$ not necessarily, we cover~$U$ by finitely many affinoids~$U_i$.
 By what we have just seen, $U_{i,\RZ}$ is open in~$U_i$, hence also in~$U$.
 By \cref{cover_source} we know that $U_{\RZ} = \bigcup U_{i,\RZ}$.
 In particular, $U_{\RZ}$ is open.

 In the general case where neither~$U$ nor~$X$ needs to be affinoid, we cover~$X$ by finitely many affinoids~$X_i$ and set $U_i = U \times_X X_i$.
 For every $i$ we denote by $Z_i \subseteq U_i$ the complement of~$U_{i,\RZ}$.
 This is a locally closed subset of~$U$ by what we have seen above.
 Hence, its closure~$\bar{Z}_i$ in~$U$ is given by all specializations of points in~$Z_i$.
 We claim that
 \[
  U' := U \setminus \bigcup_i \bar{Z}_i = U_{\RZ}.
 \]
 In order to show that $U' \subseteq U_{\RZ}$ we use \cref{U_RZ_local_base}.
 By definition
 \[
  U' \cap U_i = U_i \setminus \bigcup_j \big(U_i \cap \bar{Z}_j) \subseteq U_i \setminus Z_i = U_{i,\RZ}.
 \]
 for all~$i$.
 This implies $U' \subseteq U_{\RZ}$.
 
 Suppose there is a point $u \in U_{\RZ} \setminus U'$.
 As~$U'$ is open and~$u_{\RZ} \in U_{\RZ}$, we may assume that~$u$, and hence also $f(u)$, is a Riemann-Zariski point.
 By definition of~$U'$, $u$ is a specialization of some point $z \in Z_i$ for some~$i$.
 Let
 \[
  z \rightsquigarrow z_1 \rightsquigarrow z_2 \rightsquigarrow u,
 \]
 be the decomposition of the specialization $z \rightsquigarrow u$ into a horizontal specialization followed by a schematic and a vertical specialization.
 Then
 \[
  f(z) \rightsquigarrow f(z_1) \rightsquigarrow f(z_2) \rightsquigarrow f(u)
 \]
 gives the analogous decomposition for the specialization $f(z) \rightsquigarrow f(u)$.
 Since $f(u)$ is a Riemann-Zariski point and~$X$ is square complete, \cref{specialization_RZ} implies that $f(z_1) = f(z)_{\RZ,X}$.
 Then by \cref{preimage_RZ}, $z_1 = z_{\RZ,U}$.
 Using \cref{RZ_point_subset}, we conclude that $f(z_{\RZ,U_i})$ is a Riemann-Zariski point of~$X_i$.
 In other words, $z \in U_{i,\RZ}$, a contradiction.

 We now know that $U_{\RZ}$ is open and to finish the proof we have to convince ourselves that it is also quasi-compact.
 Let~$S$ be the set of Riemann-Zariski points of~$X$.
 It is quasi-compact by \cref{RZ_qc}.
 Moreover, $f$ being quasi-compact, the preimage $f^{-1}(S)$ is also quasi-compact.
 Inspecting the definition of~$U_{\RZ}$ and using \cref{preimage_RZ}, we see that $f^{-1}(S)$ is in fact precisely the set of Riemann-Zariski points of~$U_{\RZ}$.
 If
 \[
  U_{\RZ} = \bigcup_i U_i
 \]
 is an open cover of~$U_{\RZ}$, we know that finitely many~$U_i$ cover the Riemann-Zariski points of~$U_{\RZ}$.
 But any closed subset of~$U_{\RZ}$ contains a Riemann-Zariski point by \cref{qc_horizontal}, so these finitely many~$U_i$ cover all of~$U_{\RZ}$.
\end{proof}

In the situation of \cref{U_RZ_open} we know that $U_{\RZ}$ is open.
It is then clear from the construction that it is the maximal open subset~$V$ of~$U$ such that $V \to X$ is a Riemann-Zariski morphisms.
We call~$U_{\RZ}$ the \emph{maximal Riemann-Zariski open}.

\begin{lemma} \label{functorial_U_RZ}
 Let $X$ be a quasi-compact, separated, square complete pseudoadic space and $f:U \to V$ a morphism of \'etale $X$-spaces.
 Then~$f$ induces a morphism $f_{\RZ} : U_{\RZ} \to V_{\RZ}$.
\end{lemma}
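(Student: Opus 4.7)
The plan is to show that $f(U_{\RZ}) \subseteq V_{\RZ}$; the desired morphism $f_{\RZ}$ is then obtained by restricting $f$ to $U_{\RZ}$. Write $p_U \colon U \to X$ and $p_V \colon V \to X$ for the structure morphisms, so that $p_U = p_V \circ f$, and observe that, as a morphism of étale $X$-spaces, $f$ is itself étale.

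Fix $u \in U_{\RZ}$, i.e.\ $p_U(u_{\RZ,U})$ is a Riemann-Zariski point of~$X$. First I would apply \cref{bijection_horizontal} to the étale morphism~$f$: the horizontal specialization $u \rightsquigarrow u_{\RZ,U}$ in~$U$ is sent to a horizontal specialization $f(u) \rightsquigarrow f(u_{\RZ,U})$ in~$V$. Next, since $p_V$ is étale hence locally quasi-finite, and since $p_V(f(u_{\RZ,U})) = p_U(u_{\RZ,U})$ is Riemann-Zariski in~$X$ by hypothesis, \cref{preimage_RZ} implies that $f(u_{\RZ,U})$ is already a Riemann-Zariski point of~$V$.

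To finish I would argue that $f(u_{\RZ,U}) = f(u)_{\RZ,V}$. Combining \cref{separated_horizontal} with \cref{qc_horizontal} yields uniqueness of the Riemann-Zariski horizontal specialization on any quasi-compact adic space separated over an affinoid. I would cover $X$ by affinoid opens $X_i$ and set $V_i := p_V^{-1}(X_i)$; each $V_i$ is quasi-compact and separated over the affinoid $X_i$ (étale morphisms being separated), so uniqueness applies on every $V_i$. Choosing an index $i$ with $p_U(u_{\RZ,U}) \in X_i$ and arguing inside $V_i$ along the horizontal chain ending at $f(u_{\RZ,U})$, one concludes $f(u)_{\RZ,V} = f(u_{\RZ,U})$, and hence $p_V(f(u)_{\RZ,V}) = p_U(u_{\RZ,U})$ is Riemann-Zariski in~$X$, i.e.\ $f(u) \in V_{\RZ}$.

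The main obstacle I anticipate is exactly this last reduction: one must show that the single affinoid pullback $V_i$ genuinely captures the entire horizontal chain $f(u) \rightsquigarrow f(u_{\RZ,U})$, given that in general horizontal specializations can escape open affinoid neighborhoods (cf.\ \cref{U_RZ_not_open}). To handle this I would exploit \cref{bijection_horizontal} applied to $p_V$ together with the square completeness of~$X$ to track the chain and to rule out additional Riemann-Zariski horizontal specializations of $f(u)$ existing in~$V$ beyond the one already detected inside $V_i$.
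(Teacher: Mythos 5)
Your argument follows the paper's proof up through showing that $f(u_{\RZ,U})$ is a Riemann-Zariski point of~$V$: both use that $p_V(f(u_{\RZ,U})) = p_U(u_{\RZ,U})$ is Riemann-Zariski in~$X$ and then invoke \cref{preimage_RZ} for the locally quasi-finite $p_V$. (You do not actually need \cref{bijection_horizontal} for the preliminary observation that $f(u) \rightsquigarrow f(u_{\RZ,U})$ is a horizontal specialization; that étale morphisms send horizontal specializations to horizontal specializations is more elementary and already suffices.)

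Where you diverge is the final identification $f(u_{\RZ,U}) = f(u)_{\RZ,V}$. You try to reduce to the affinoid pullbacks $V_i = p_V^{-1}(X_i)$ in order to apply uniqueness of the Riemann-Zariski horizontal specialization, and you then correctly worry that the horizontal chain $f(u) \rightsquigarrow f(u_{\RZ,U})$ need not be captured by a single $V_i$. But this detour is not needed and the obstacle you anticipate never arises: the uniqueness statement (the remark combining \cref{separated_horizontal} and \cref{qc_horizontal}) already applies directly to~$V$, which is separated and quasi-compact --- these are exactly the standing hypotheses under which the set $V_{\RZ}$ and the notation $v_{\RZ,V}$ are defined in the first place. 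Since $f(u_{\RZ,U})$ is a Riemann-Zariski point of~$V$ and a horizontal specialization of $f(u)$, uniqueness in~$V$ gives $f(u_{\RZ,U}) = f(u)_{\RZ,V}$ immediately, hence $f(u) \in V_{\RZ}$. That is precisely how the paper's short argument closes, and neither square completeness of~$X$ nor \cref{bijection_horizontal} for $p_V$ is required for this step.
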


\begin{proof}
 Let $u \in U_{\RZ}$, i.e. the image of~$u_{\RZ}$ in~$X$ is a Rieman Zariski point.
 Then $f(u_{\RZ})$ maps to a Riemann-Zariski point in~$X$, hence is itself a Riemann-Zariski point by \cref{preimage_RZ}.
 In particular $f(u)_{\RZ} = f(u_{\RZ})$ and this point maps to a Riemann-Zariski point in~$X$.
\end{proof}

\begin{lemma}
 Let $X$ be a quasi-compact, separated, square complete pseudoadic space and $f:U \to W$ and $g: V \to W$ morphisms of separated, quasi-compact \'etale $X$-spaces.
 Then there is a natural isomorphism
 \[
  (U \times_W V)_{\RZ} \cong U_{\RZ} \times_{W_{\RZ}} V_{\RZ}.
 \]
\end{lemma}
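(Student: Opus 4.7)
The plan is to construct the natural map via functoriality, recognize both sides as quasi-compact open subspaces of $U \times_W V$, and then reduce to verifying equality of underlying point sets. Since $U \times_W V$ is again an étale $X$-space, the projections $\pi_U : U \times_W V \to U$ and $\pi_V : U \times_W V \to V$ are morphisms of étale $X$-spaces. By \cref{functorial_U_RZ} they restrict to morphisms $(U \times_W V)_\RZ \to U_\RZ$ and $(U \times_W V)_\RZ \to V_\RZ$ whose further projections into $W_\RZ$ agree, and the universal property of the fiber product then produces the natural map
\[
 (U \times_W V)_\RZ \longrightarrow U_\RZ \times_{W_\RZ} V_\RZ.
\]

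Both sides are open subspaces of $U \times_W V$: the left by \cref{U_RZ_open} applied to the étale morphism $U \times_W V \to X$, and the right because open immersions are stable under fiber products. Under these identifications the natural map becomes an inclusion of opens, so the claim reduces to checking that the underlying subsets of $|U \times_W V|$ coincide.

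Given $z \in U \times_W V$ with images $u \in U$, $v \in V$, $w \in W$, $x \in X$, the forward inclusion follows immediately from functoriality. For the reverse, assume $u \in U_\RZ$ and $v \in V_\RZ$. Then by definition the images of $u_\RZ$ and $v_\RZ$ in $X$ both equal the RZ point $x_\RZ$, and \cref{preimage_RZ} forces their common image in $W$ to be $w_\RZ$. I would now pass to an affinoid neighborhood of $z$, so that horizontal specializations are described combinatorially by convex subgroups containing the characteristic subgroup, and use \cref{bijection_horizontal} to compare horizontal specializations of $z$ in $U \times_W V$ with those of $u$ in $U$ and of $v$ in $V$. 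The goal is to pin down $\pi_U(z_\RZ) = u_\RZ$ and $\pi_V(z_\RZ) = v_\RZ$, which would force $z_\RZ$ to map to $x_\RZ$ in $X$, hence $z \in (U \times_W V)_\RZ$.

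The main obstacle is this last pointwise verification. Concretely it comes down to a convex-subgroup bookkeeping: showing that the characteristic subgroup of $z$, viewed inside $\Gamma_z$, is generated via the inclusions induced by the étale projections by the characteristic subgroups of $u$ and $v$, both of which coincide with that of $w$ under the étale maps. Here the assumption that $X$ is square complete enters through \cref{specialization_RZ} to control the RZ position within the horizontal/schematic/vertical decomposition of specializations, making the combinatorial comparison possible.
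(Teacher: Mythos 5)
Your "forward" inclusion $(U \times_W V)_{\RZ} \subseteq U_{\RZ} \times_{W_{\RZ}} V_{\RZ}$ is handled correctly via \cref{functorial_U_RZ}, and this matches the paper. The problem is the reverse inclusion, which you only sketch and explicitly flag as the "main obstacle": you outline a programme of choosing affinoid neighborhoods and comparing characteristic subgroups via \cref{bijection_horizontal}, but you do not carry it out. As it stands, that part is not a proof but a statement of intent. Moreover, the convex-subgroup computation you gesture at has genuine subtleties: the characteristic subgroup of $z$ is generated by values on a full tensor algebra, not only on simple tensors, and the RZ point $z_{\RZ}$ involves minimizing characteristic subgroups over \emph{all} affinoid neighborhoods of $z$ in $U \times_W V$, which need not be of the product form $U' \times_{W'} V'$ you would want. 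Even assuming the bookkeeping can be pushed through, you would first need to know $\pi_U(z_{\RZ})$ and $\pi_V(z_{\RZ})$ map to RZ points of $X$ in order to apply \cref{preimage_RZ} and conclude they equal $u_{\RZ}$ and $v_{\RZ}$ — but that is precisely the conclusion you are trying to establish, so the argument threatens to be circular.

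The paper avoids this entirely. For the inclusion $U_{\RZ} \times_{W_{\RZ}} V_{\RZ} \subseteq (U \times_W V)_{\RZ}$ it uses the \emph{universal property} of the maximal Riemann-Zariski open established just after \cref{U_RZ_open}: $(U \times_W V)_{\RZ}$ is the largest open subspace of $U \times_W V$ whose structure map to $X$ is Riemann-Zariski. So it suffices to show that $U_{\RZ} \times_{W_{\RZ}} V_{\RZ}$ is open (clear, as a fiber product of opens) and Riemann-Zariski over $X$. The latter is assembled entirely from the lemmas in \cref{subsection_RZ_morphisms}: $U_{\RZ} \to W_{\RZ}$ and $V_{\RZ} \to W_{\RZ}$ are Riemann-Zariski by \cref{gf_RZ_then_f_RZ}, hence the base change $U_{\RZ} \times_{W_{\RZ}} V_{\RZ} \to W_{\RZ}$ is Riemann-Zariski by \cref{RZ_stable_bc} and \cref{composition_RZ}, and composing with the Riemann-Zariski morphism $W_{\RZ} \to X$ gives the result by \cref{composition_RZ} again. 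This is where the square completeness of $X$ actually enters (through \cref{U_RZ_open} guaranteeing openness), not through \cref{specialization_RZ} as you suggest. You should replace your pointwise sketch with this universal-property argument.
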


\begin{proof}
 By \cref{gf_RZ_then_f_RZ} the induced morphisms $U_{\RZ} \to W_{\RZ}$ and $V_{\RZ} \to W_{\RZ}$ are Riemann-Zariski.
 Hence the fiber product $U_{\RZ} \times_{W_{\RZ}} V_{\RZ}$ is Riemann-Zariski over~$W_{\RZ}$ by \cref{RZ_stable_bc}.
 Also the composition
 \[
  U_{\RZ} \times_{W_{\RZ}} V_{\RZ} \to W_{\RZ} \to X
 \]
 is Riemann-Zariski by \cref{composition_RZ}.
 Moreover, $U_{\RZ} \times_{W_{\RZ}} V_{\RZ}$ is an open subspace of $U \times_W V$.
 Since $(U \times_W V)_{\RZ}$ is the biggest open subspace of $U \times_W V$ which is Riemann-Zariski over~$X$, we obtain an inclusion
 \[
  U_{\RZ} \times_{W_{\RZ}} V_{\RZ} \subseteq (U \times_W V)_{\RZ}.
 \]
 In order to show the converse inclusion, we just need to notice that we have a natural morphism
 \[
  (U \times_W V)_{\RZ} \to U_{\RZ} \times_{W_{\RZ}} V_{\RZ}
 \]
 by \cref{functorial_U_RZ}, which is obviously inverse to the above inclusion.
\end{proof}

\section{Comparison with Riemann-Zariski spaces} \label{comparison_RZ}

Let $X \to S$ be a separated morphism of quasi-separated quasi-compact schemes.
In \cite{Tem11} Temkin establishes the following connection between the discretely ringed adic space $\Spa(X,S)$ and the Riemann-Zariski space $\RZ_X(S)$.
He constructs two continuous morphisms
\[
 \RZ_X(S) \overset{\iota}{\longrightarrow} \Spa(X,S) \overset{\pi}{\longrightarrow} \RZ_X(S)
\]
such that $\pi \circ \iota$ is the identity.
In particular, $\iota$ is a topological embedding.
Its image comprises precisely the Riemann-Zariski points of $\Spa(X,S)$.
Moreover, the composition $\iota \circ \pi$ maps a point $x \in \Spa(X,S)$ to its associated Riemann-Zariski point~$x_{\RZ}$.

We will generalize this construction in two directions.
Firstly, we consider more general adic spaces, not just discretely ringed ones.
Secondly, we lift the two maps~$\iota$ and~$\pi$ to morphisms of sites for the \'etale, the strongly étale, and the tame topology.

For an adic space $X$ we define the following site~$X_{\tau,\RZ}$ for $\tau \in \{\et,t,\set\}$:
Its underlying category is the full subcategory of~$X_{\tau}$ consisting of objects $U \to X$ that are Riemann-Zariski.
Covers are given by surjective families.
Note that fiber products exist in~$X_{\tau,\RZ}$ by \cref{RZ_stable_bc}.

The site $X_{\tau,\RZ}$ should be thought of as the $\tau$-site of the Riemann-Zariski points of~$X$.
We  always have a natural morphism of sites
\[
 \pi: X_{\tau} \to X_{\tau,\RZ}
\]
that is induced by the inclusion functor of $X_{\tau,\RZ}$ into $X_{\tau}$.
If~$X$ is square complete (e.g. affinoid, analytic or discretely ringed of the form $\Spa(Y,S)$), separated and quasi-compact, we have a morphism of sites
\begin{align*}
 \iota:	& X_{\tau,\RZ}	 \to X_{\tau},	\\
		& U_{\RZ}		\mapsfrom U,
\end{align*}
where~$U_{\RZ}$ is the open subspace of~$U$ defined in \cref{section_RZ_points}.
It is indeed an open quasi-compact subspace of~$U$ by \cref{U_RZ_open}.
Moreover, the construction is functorial by \cref{functorial_U_RZ}.

\begin{lemma} \label{iota_morphism_of_sites}
 The above functor~$\iota$ defines a morphism of sites.
\end{lemma}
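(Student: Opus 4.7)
The plan is to verify directly that the functor $u \colon X_\tau \to X_{\tau,\RZ}$, $U \mapsto U_\RZ$, which underlies~$\iota$, is continuous in the sense required of a morphism of sites: it is functorial, commutes with fibre products, and sends covering families to covering families. Functoriality is \cref{functorial_U_RZ}, and preservation of fibre products is the preceding lemma, so the only substantive point to check is that covers go to covers.

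Let $\{\phi_i \colon U_i \to U\}_{i \in I}$ be a covering family in~$X_\tau$; I need to show that $\{U_{i,\RZ} \to U_\RZ\}_{i \in I}$ is jointly surjective. Fix $u \in U_\RZ$ and set $u_0 := u_{\RZ,U}$. Then $u_0$ is itself a Riemann-Zariski point of~$U$, so $u_{0,\RZ,U} = u_0$, and since $u \in U_\RZ$ its image $f(u_0) = f(u_{\RZ,U})$ is a Riemann-Zariski point of~$X$; in particular $u_0 \in U_\RZ$. By surjectivity of the original cover, pick~$i$ and $\tilde u_0 \in U_i$ with $\phi_i(\tilde u_0) = u_0$. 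Since $\phi_i$ is étale, hence locally quasi-finite, \cref{preimage_RZ} shows that $\tilde u_0$ is Riemann-Zariski in~$U_i$, and because $f_i(\tilde u_0) = f(u_0)$ is Riemann-Zariski in~$X$, we conclude $\tilde u_0 \in U_{i,\RZ}$.

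It remains to lift $u$ itself. The horizontal specialization $u \rightsquigarrow u_0$ makes $u$ a generalization of~$u_0$ in~$U$; because the étale morphism~$\phi_i$ is flat, generalizations lift, yielding $\tilde u \in U_i$ with $\phi_i(\tilde u) = u$ and $\tilde u \rightsquigarrow \tilde u_0$. By \cref{U_RZ_open} the subset $U_{i,\RZ} \subseteq U_i$ is open, hence stable under generalization in the spectral sense; since $\tilde u_0 \in U_{i,\RZ}$ it follows that $\tilde u \in U_{i,\RZ}$, providing the required preimage of~$u$.

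The main obstacle is really this last step. A naive attempt to pick any preimage~$\tilde u$ of~$u$ and argue directly that $\tilde u \in U_{i,\RZ}$ fails: by \cref{bijection_horizontal} the image $\phi_i(\tilde u_{\RZ,U_i})$ is only constrained to be some horizontal specialization of~$u$ lying between~$u$ and~$u_0$, and its image in~$X$ need not be Riemann-Zariski. Lifting~$u_0$ first, observing that the lift is automatically in~$U_{i,\RZ}$, and then exploiting openness of~$U_{i,\RZ}$ to pass from~$\tilde u_0$ to a generalization mapping to~$u$ is what makes the argument go through.
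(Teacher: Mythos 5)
Your proof is correct and takes essentially the same route as the paper: both reduce to lifting the Riemann-Zariski specialization $u_0 = u_{\RZ,U}$ to some $U_{i,\RZ}$ via \cref{preimage_RZ}, and both then invoke openness of $U_{i,\RZ}$ (\cref{U_RZ_open}) to account for $u$ itself. The paper phrases this last step as "the image of $U_{i,\RZ}$ in $U_\RZ$ is open, hence closed under generalization," whereas you lift the generalization upstairs and use openness there — the same idea in slightly different packaging.
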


\begin{proof}
 Let $(U_i \to U)_i$ be a cover in~$X_{\tau}$.
 We need to show that $(U_{i,\RZ} \to U_{\RZ})_i$ is a cover in~$X_{\tau,\RZ}$, i.e. that it is a surjective family.
 Since every point of~$U$ specializes to a Riemann-Zariski point and $U_{i,\RZ} \to U_{\RZ}$ is open for all~$i$, it suffices to show that the Riemann-Zariski points of~$U_{\RZ}$ are contained in the image.
 Let $u \in U_{\RZ}$ be a Riemann-Zariski point.
 By definition of~$U_{\RZ}$ it maps to a Riemann-Zariski point $x \in X$.
 As the $U_i$'s cover~$U$, $u$ has a preimage~$u_i$ in some $U_i$.
 It maps to the Riemann-Zariski point $x \in X$.
 By the definition of~$U_{i,\RZ}$, $u_i$ is thus contained in~$U_{i,\RZ}$.
\end{proof}

We want to show next that the morphisms of sites~$\pi$ and~$\iota$ are functorial in~$X$.
First we need to know that the Riemann-Zariski site is functorial:

\begin{lemma}
 A Riemann-Zariski morphism $f:Z \to X$ induces a morphism of sites
 \[
  Z_{\tau,\RZ} \longrightarrow X_{\tau,\RZ}
 \]
\end{lemma}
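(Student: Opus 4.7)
The plan is to take the underlying continuous functor $u \colon X_{\tau,\RZ} \to Z_{\tau,\RZ}$ of the desired morphism of sites $Z_{\tau,\RZ} \to X_{\tau,\RZ}$ to be base change along~$f$: for an object $(U \to X)$ of $X_{\tau,\RZ}$, set
\[
 u(U) := U \times_X Z \longrightarrow Z.
\]
There are three items to verify: (i)~that $u(U)$ lands in $Z_{\tau,\RZ}$, (ii)~that $u$ takes covers to covers, and (iii)~that $u$ commutes with finite limits (in particular fiber products).

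The main task is (i). That $u(U) \to Z$ is again $\tau$-étale is immediate from the stability of étaleness, unramifiedness, and tame ramification of residue field extensions under base change, since the residue fields at points of $U \times_X Z$ are obtained from those of $U$ by tensoring over~$X$. For the Riemann-Zariski condition I apply \cref{RZ_stable_bc} to the cartesian square
\[
 \begin{tikzcd}
  U \times_X Z \ar[r,"p"] \ar[d,"q"'] & Z \ar[d,"f"] \\
  U \ar[r,"g"'] & X
 \end{tikzcd}
\]
using that $f$ is Riemann-Zariski and $g$ is étale, hence locally quasi-finite; this directly yields that $q$ is a Riemann-Zariski morphism. Given a Riemann-Zariski point $w \in U \times_X Z$, it follows that $q(w)$ is RZ in~$U$, and then the RZ-ness of $g \in X_{\tau,\RZ}$ implies $f(p(w)) = g(q(w))$ is RZ in~$X$. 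To deduce that $p(w)$ itself is RZ in~$Z$, I invoke \cref{bijection_horizontal} for the étale morphism~$p$: any nontrivial horizontal specialization of $p(w)$ in~$Z$ would, via the lifting property for étale morphisms and \cref{preimage_RZ}, produce a nontrivial horizontal specialization of~$w$, contradicting $w$ being Riemann-Zariski; the Riemann-Zariski hypothesis on~$f$ ensures the targets of the relevant specializations remain inside the image of~$p$.

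Once $u$ is well-defined the remainder is formal. For (ii), a surjective family $(U_i \to U)_i$ in $X_{\tau,\RZ}$ base-changes to the surjective family $(U_i \times_X Z \to U \times_X Z)_i$, which is a cover in $Z_{\tau,\RZ}$ by item~(i). For (iii), the standard categorical identity $(U \times_W V) \times_X Z \cong (U \times_X Z) \times_{W \times_X Z} (V \times_X Z)$ exhibits $u$ as left exact. A continuous, finite-limit-preserving functor in the direction $X_{\tau,\RZ} \to Z_{\tau,\RZ}$ is by definition a morphism of sites in the opposite direction, yielding the claimed $Z_{\tau,\RZ} \to X_{\tau,\RZ}$.

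The principal obstacle lies in item~(i): \cref{RZ_stable_bc} applied in the obvious way immediately gives the Riemann-Zariski property for~$q$, but not for~$p$. The asymmetry reflects the fact that $f$ is not assumed locally quasi-finite, so one must genuinely use the étaleness of~$g$ (and hence of~$p$) together with \cref{bijection_horizontal} to lift horizontal specializations from~$Z$ to $U \times_X Z$. Everything else is a mechanical unwinding of definitions.
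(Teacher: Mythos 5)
Your diagnosis of the printed proof is on target: applying \cref{RZ_stable_bc} to the square
\[
 \begin{tikzcd}
  U \times_X Z \ar[r,"p"] \ar[d,"q"'] & Z \ar[d,"f"] \\
  U \ar[r,"g"'] & X
 \end{tikzcd}
\]
with $f$ Riemann-Zariski and $g$ locally quasi-finite yields that $q$ is Riemann-Zariski, whereas what must be shown is that $p$ is, and the hypotheses of \cref{RZ_stable_bc} do not match for $p$ because $f$ is not assumed locally quasi-finite. The paper's one-line citation is indeed not a direct application of that lemma.

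The gap lies in the patch. Applying \cref{bijection_horizontal} to the \'etale map $p$ and a Riemann-Zariski point $w$ only yields a bijection between the horizontal specializations of $w$ (just $\{w\}$) and the horizontal specializations of $p(w)$ that are \emph{dominated by} $p(w_{\RZ}) = p(w)$. Among the horizontal specializations of $p(w)$ the only one that generalizes $p(w)$ is $p(w)$ itself, so the bijection is vacuous and carries no information about whether $p(w)$ has further nontrivial horizontal specializations. There is no unconditional ``lifting property'' for horizontal specializations along an \'etale morphism: if there were, every \'etale morphism would be Riemann-Zariski, contradicting \cref{U_RZ_not_open}, where the open immersion $X_1 \hookrightarrow X$ sends the Riemann-Zariski point $x$ of $X_1$ to a non-Riemann-Zariski point of $X$. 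And \cref{preimage_RZ} transfers Riemann-Zariski-ness from the target up a locally quasi-finite map, which is the wrong direction here. Your observation that Riemann-Zariski-ness of $f$ keeps the putative specialization inside the image of $p$ is correct (any nontrivial horizontal specialization $z'$ of $p(w)$ has $f(z') = f(p(w))$, which lies in $g(U)$, so $z' \in p(U \times_X Z)$), but having a preimage of $z'$ does not produce a specialization of $w$ over it, still less a horizontal one.

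What is actually needed is the value-group mechanism from the second half of the proof of \cref{RZ_stable_bc}, adapted to the present configuration. From $q(w)$ and $x := g(q(w)) = f(p(w))$ being Riemann-Zariski one gets $c\Gamma_x = \Gamma_x$, whose image in $\Gamma_{p(w)}$ is contained in $c\Gamma_{p(w)}^{Z}$ and (via $\Gamma_{q(w)} = \Gamma_x$) in $\Gamma_{q(w)}$; if $p(w)$ were not Riemann-Zariski then $c\Gamma_{p(w)}^{Z} \subsetneq \Gamma_{p(w)} = \Gamma_w$ would be a proper convex subgroup of $\Gamma_w$ containing both images, and the valuation-ring construction in the proof of \cref{RZ_stable_bc} would manufacture a nontrivial horizontal specialization of $w$, a contradiction. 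This is a genuinely different mechanism from formally lifting a given horizontal specialization of $p(w)$ along $p$, and it is the step where the Riemann-Zariski-ness (not merely \'etaleness) of $g$ becomes essential.
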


\begin{proof}
 Let $U \to X$ be a $\tau-\RZ$-morphism.
 Then
 \[
  U_Z := U \times_X Z \longrightarrow Z
 \]
 is a $\tau$-morphism by \cite{HueAd}, \S~3.
 It is also a Riemann-Zariski morphism by \cref{RZ_stable_bc}.
\end{proof}

It is clear that for a composition $Z \to Y \to X$ the composition $Z_{\tau,\RZ} \to Y_{\tau,\RZ} \to X_{\tau,\RZ}$ is equivalent to $Z_{\tau,\RZ} \to X_{\tau,\RZ}$ obtained from $Z \to X$.

\begin{lemma} \label{functoriality_iota_pi}
 For a Riemann-Zariski morphism $f:Z \to X$ the following diagram commutes.
 \[
  \begin{tikzcd}
   Z_\tau	\ar[r,"\pi_Z"]	\ar[d,"f_\tau"]	& Z_{\tau,\RZ}	\ar[r,"\iota_Z"]	\ar[d,"f_{\tau,\RZ}"]	& Z_\tau	\ar[d,"f_{\tau}"]	\\
   X_\tau	\ar[r,"\pi_X"]					& X_{\tau,\RZ}	\ar[r,"\iota_X"]							& X_\tau.
  \end{tikzcd}
 \]
\end{lemma}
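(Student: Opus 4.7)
The plan is to verify commutativity of the two squares separately, working on the level of the underlying inverse-image functors (which determine the morphisms of sites).

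For the left square, the inverse images of $\pi_X$ and $\pi_Z$ are the tautological inclusions of the Riemann-Zariski subcategories into the full $\tau$-categories. Both compositions send an object $(U\to X)\in X_{\tau,\RZ}$ to the base change $(U\times_X Z\to Z)$, which lies again in $Z_{\tau,\RZ}$ by the preceding functoriality lemma. Hence the left square commutes on the nose.

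For the right square, the inverse image of $\iota_X$ sends $(U\to X)\in X_\tau$ to the maximal Riemann-Zariski open $(U_{\RZ}\to X)\in X_{\tau,\RZ}$ of \cref{U_RZ_open}, and similarly for $\iota_Z$. Commutativity of the right square thus reduces to a natural identification
\[
  U_{\RZ}\times_X Z \;=\; (U\times_X Z)_{\RZ}
\]
of open subspaces of $U\times_X Z$, where the right-hand side is the maximal Riemann-Zariski open over $Z$. I would check the equality pointwise. A point $w\in U\times_X Z$ lies in $U_{\RZ}\times_X Z$ iff its image $w_U\in U$ satisfies that $(w_U)_{\RZ,U}$ maps to a Riemann-Zariski point of $X$; it lies in $(U\times_X Z)_{\RZ}$ iff $w_{\RZ}$ in $U\times_X Z$ maps to a Riemann-Zariski point of $Z$. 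The plan is to identify these two conditions, using that $U\times_X Z\to Z$ is étale (so \cref{bijection_horizontal} relates horizontal specializations of $w$ to those of its image in $Z$), that $f\colon Z\to X$ is Riemann-Zariski (so RZ points push forward to RZ points), and that the ambient space is square complete (so \cref{specialization_RZ} applies to the specialization $w\rightsquigarrow w_{\RZ}$, identifying the image of $w_{\RZ}$ in $X$ with $f(w_{\RZ, U \times_X Z})$ computed via the $U$-leg). Naturality in $U$ then follows from \cref{functorial_U_RZ}.

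The main obstacle is the asymmetry of the Cartesian square: the morphism $U\times_X Z\to U$ is Riemann-Zariski (by \cref{RZ_stable_bc} applied to the étale leg $U\to X$ and the Riemann-Zariski $f$), but it is not étale, so \cref{bijection_horizontal} is unavailable in that direction and horizontal specializations of $w$ in $U\times_X Z$ need not descend to horizontal specializations of $w_U$ in $U$. The delicate step is therefore to bridge ``$(w_U)_{\RZ,U}$ maps to an RZ point of $X$'' with ``$w_{\RZ}$ maps to an RZ point of $Z$'' through their common image in $X$, where the Riemann-Zariski hypothesis on $f$ and the square completeness of the ambient spaces enter essentially.
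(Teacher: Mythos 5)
Your reduction of the right square to the identity $(U\times_X Z)_{\RZ} = U_{\RZ}\times_X Z$ is exactly what the paper does, and you have assembled the right ingredients, most importantly that $p\colon U\times_X Z\to U$ is Riemann-Zariski via \cref{RZ_stable_bc}. But the pointwise strategy you sketch is more roundabout than necessary, and the ``obstacle'' you flag is not actually one. The paper splits the identity into two inclusions. The inclusion $U_{\RZ}\times_X Z\subseteq(U\times_X Z)_{\RZ}$, which you do not address, is immediate from \cref{RZ_stable_bc}: the base change $U_{\RZ}\times_X Z\to Z$ of the RZ morphism $U_{\RZ}\to X$ along the étale $Z\to X$ is RZ, so $U_{\RZ}\times_X Z$ sits inside the maximal RZ open. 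For the reverse inclusion, the fact that $p$ is RZ already yields $p(y_{\RZ}) = p(y)_{\RZ}$ for every $y$: the image of a horizontal specialization is horizontal, $p(y_{\RZ})$ is RZ because $p$ is an RZ morphism, and RZ horizontal specializations are unique over a separated quasi-compact base (\cref{separated_horizontal}, \cref{qc_horizontal}). Combined with $f\colon Z\to X$ being RZ, this shows directly that $p$ restricts to a map $(U\times_X Z)_{\RZ}\to U_{\RZ}$, and the universal property of the fiber product finishes the job. Neither \cref{bijection_horizontal} nor \cref{specialization_RZ} nor square completeness is needed here; you are importing machinery from the openness proof (\cref{U_RZ_open}) into a lemma where the simpler RZ-morphism bookkeeping suffices.
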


\begin{proof}
 The left hand square commutes by construction of the morphisms of sites involved.
 The commutativity of the right hand square amounts to checking the identity
 \[
  (U \times_X Z)_{\RZ,Z} = U_{\RZ,X} \times_X Z
 \]
 as open subspaces of $U \times_X Z$ for any $\tau$-morphism $U \to X$.
 By compatibility with base change (\cref{RZ_stable_bc}) we get an inclusion
 \[
  U_\RZ \times_X Z \subseteq (U \times_X Z)_\RZ.
 \]
 Let us show the reverse inclusion.
 By \cref{RZ_stable_bc} the morphism
 \[
  f_U : U \times_X Z \to U
 \]
 is Riemann-Zariski.
 Therefore, for $y \in U \times_X Z$, we have $f_U(y_\RZ) = f_U(y)_\RZ$, which implies that~$f_U$ induces a morphism
 \[
  f_{U,\RZ} : (U \times_X Z)_\RZ \to U_\RZ.
 \]
 We get a diagram
 \[
  \begin{tikzcd}
   (U \times_X Z)_\RZ	\ar[ddr,bend right]	\ar[dr,dashed]	\ar[drr,bend left]	&									&\\
																				& U \times_X U_\RZ	\ar[r]	\ar[d]	& U_\RZ	\ar[d]	\\
																				& Z 				\ar[r]			& X
  \end{tikzcd}
 \]
 and the dashed arrow is the desired inclusion.
\end{proof}

\begin{lemma} \label{pi_*_exact}
 For every sheaf~$\mathcal{F}$ on~$X_{\tau}$ we have a natural isomorphism $R\pi_*\mathcal{F} \cong \iota^*\mathcal{F}$.
 In particular, $\pi_*$ is exact.
\end{lemma}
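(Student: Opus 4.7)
My strategy is to identify $\pi_*$ with $\iota^*$ as functors on sheaves and then observe that $\pi_*$ is exact; together these yield both the derived isomorphism and the ``in particular'' consequence.

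The first step is to establish an adjunction $j \dashv r$ on the underlying categories, where $j \colon X_{\tau,\RZ} \hookrightarrow X_\tau$ is the inclusion and $r \colon X_\tau \to X_{\tau,\RZ}$ sends $U$ to $U_{\RZ}$. For $V \in X_{\tau,\RZ}$ and $U \in X_\tau$, any $\tau$-morphism $V \to U$ factors uniquely as $V = V_{\RZ} \to U_{\RZ} \hookrightarrow U$ by \cref{functorial_U_RZ}, producing the required bijection $\Hom_{X_\tau}(V,U) \cong \Hom_{X_{\tau,\RZ}}(V,U_{\RZ})$. The unit of this adjunction is the identity on $V$ and the counit is the open immersion $U_{\RZ} \hookrightarrow U$.

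Next I would lift this to an adjunction $\pi_* \dashv \iota_*$ on sheaves. Since $(\pi_*\mathcal{F})(V) = \mathcal{F}(V)$ for $V \in X_{\tau,\RZ}$ and $(\iota_*\mathcal{G})(U) = \mathcal{G}(U_{\RZ})$ for $U \in X_\tau$, the bijection is direct: a natural transformation $\mathcal{F} \to \iota_*\mathcal{G}$ is a family of maps $\mathcal{F}(U) \to \mathcal{G}(U_{\RZ})$, and its restriction to $U = V \in X_{\tau,\RZ}$ yields a morphism $\pi_*\mathcal{F} \to \mathcal{G}$; conversely, from such a morphism one recovers the original datum by precomposition with the restrictions $\mathcal{F}(U) \to \mathcal{F}(U_{\RZ})$. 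Naturality with respect to the open immersions $U_{\RZ} \hookrightarrow U$ makes these operations mutually inverse, and no sheafification is required because both $\pi_*$ and $\iota_*$ arise from honest composition with the functors $j$ and $r$. By uniqueness of left adjoints to $\iota_*$ we conclude $\iota^* = \pi_*$.

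Since $\pi_*$ is now both a left adjoint (to $\iota_*$) and a right adjoint (to $\pi^*$, by definition of $\pi$ as a morphism of sites), it is exact on abelian sheaves. Therefore $R^i\pi_*\mathcal{F} = 0$ for $i > 0$ and $R\pi_*\mathcal{F} = \pi_*\mathcal{F} = \iota^*\mathcal{F}$. The main obstacle is the very first step: recognizing that $(-)_{\RZ}$ is right adjoint to the inclusion of the Riemann-Zariski site, which is where the geometric content of the lemma lives; once this adjunction is in place, every subsequent manipulation is formal.
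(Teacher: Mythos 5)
Your proof is correct. The two arguments are closely related but packaged differently. The paper works directly with the colimit formula for the presheaf pullback $\iota^*_P\mathcal{F}(U) = \colim_{U \to V_{\RZ}} \mathcal{F}(V)$ and observes that for $U \in X_{\tau,\RZ}$ the indexing category has an initial object (namely $\id_U$), so the colimit collapses to $\mathcal{F}(U)$, i.e.\ to $\pi_*\mathcal{F}(U)$; since that presheaf is already a sheaf, sheafification is unnecessary and $\iota^* = \pi_*$. You instead lift the same geometric content --- the fact that $(-)_{\RZ}$ is right adjoint to the inclusion $j$, which is exactly what the paper's initial object expresses --- to an adjunction $\pi_* \dashv \iota_*$ of sheaf functors, and then invoke uniqueness of left adjoints. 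What your framing buys is a cleaner ending: $\pi_*$ sits in the adjoint triple $\pi^* \dashv \pi_* \dashv \iota_*$, so its exactness is immediate, whereas the paper argues exactness from the standard fact that inverse image functors are exact. Both proofs ultimately rest on \cref{functorial_U_RZ} (functoriality of $U \mapsto U_{\RZ}$) and on $V_{\RZ} = V$ for $V$ already Riemann--Zariski over $X$; your version simply makes the adjoint triple that is lurking in the background explicit.
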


\begin{proof}
 Since~$i^*$ is exact, it suffices to show that $\pi_* = \iota^*$.
 For a sheaf~$\mathcal{F}$ on~$X_{\tau}$ the pullback $\iota^*\mathcal{F}$ is the sheafification of the presheaf on~$X_{\tau,\RZ}$ defined by
 \[
  U \mapsto \colim_{U \to V_{\RZ}} \mathcal{F}(V),
 \]
 where the colimit is taken over all morphisms $U \to V_{\RZ}$ in $X_{\tau,\RZ}$ for some object~$V$ of~$X_{\tau}$.
 This category has an initial object, namely the identity on~$U$.
 But the presheaf $U \mapsto \mathcal{F}(U)$ on~$X_{\tau,\RZ}$ coincides with the sheaf $\pi_*\mathcal{F}$.
\end{proof}

\begin{lemma} \label{bc_pi_RZ}
 For a Riemann Zariski morphism $f \colon Z \to X$ and a sheaf $\cF \in \Shv(X_\tau)$ the base change homomorphism
 \[
  f_{\tau,\RZ}^* \pi_{X,*} \cF \longrightarrow \pi_{Z,*} f_\tau^* \cF
 \]
 associated to the square
 \[
  \begin{tikzcd}
   Z_\tau	\ar[r,"\pi_\cZ"]	\ar[d,"f_\tau"']	& Z_{\tau,RZ}	\ar[d,"f_{\tau,RZ}"]	\\
   X_\tau	\ar[r,"\pi_*"']							& X_{\tau,RZ}
  \end{tikzcd}
 \]
 is an isomorphism.
\end{lemma}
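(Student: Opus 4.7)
The plan is to verify the base change map is an isomorphism on sections over each object $W \in Z_{\tau,\RZ}$, by computing both sides as sheafifications of explicit presheaves and producing a direct inverse to the comparison map.

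By \cref{pi_*_exact}, $\pi_{X,*}\cF$ and $\pi_{Z,*}(f_\tau^*\cF)$ are simply the restrictions of $\cF$ and $f_\tau^*\cF$ to the RZ-subsites. Unwinding the pullback definitions for the morphisms of sites, for $W \in Z_{\tau,\RZ}$ the section $(f_{\tau,\RZ}^*\pi_{X,*}\cF)(W)$ is the sheafification of the presheaf
\[
 W' \;\longmapsto\; \colim_{(U,\, \phi\colon W' \to U \times_X Z),\; U \in X_{\tau,\RZ}} \cF(U),
\]
and $(\pi_{Z,*}f_\tau^*\cF)(W) = (f_\tau^*\cF)(W)$ is the sheafification of the analogous presheaf with $U$ ranging over all of $X_\tau$. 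At the presheaf level, the base change map is induced by the inclusion of indexing categories.

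The key geometric input is a factorization lemma: since the structure map $W \to Z$ is Riemann-Zariski (as $W \in Z_{\tau,\RZ}$) and $f\colon Z \to X$ is Riemann-Zariski by hypothesis, so is $W \to X$ by \cref{composition_RZ}. Consequently, for any étale $U \to X$ in $X_\tau$ and any morphism $h\colon W \to U$ arising from a factorization $W \to U \times_X Z$, the map $h$ factors through the open immersion $U_\RZ \hookrightarrow U$. To see this, take $w \in W$ with image $u \in U$ and write $g\colon U \to X$. Then $g(u_\RZ)$ is a horizontal specialization of $g(u)$ dominating $g(h(w_\RZ))$; but $g(h(w_\RZ))$ equals $g(u)_\RZ$, because $w_\RZ$ is Riemann-Zariski in $W$ and $W \to X$ is RZ, forcing its image to be $g(u)_\RZ$. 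Hence $g(u_\RZ) = g(u)_\RZ$, so $u \in U_\RZ$ by definition; openness of $U_\RZ$ (\cref{U_RZ_open}) then yields the global factorization.

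The factorization lemma provides an inverse to the inclusion of colimits: send a representative $(U, \phi, s)$ of the larger colim (with $U \in X_\tau$, $s \in \cF(U)$) to $(U_\RZ, \phi|_{U_\RZ}, s|_{U_\RZ})$. Functoriality of $U \mapsto U_\RZ$ (\cref{functorial_U_RZ}) shows this is well-defined on equivalence classes, and one checks it is inverse to the inclusion: the round trip on the smaller colim is the identity since $U_\RZ = U$ whenever $U \in X_{\tau,\RZ}$, while the round trip on the larger colim gives back the original representative up to the equivalence induced by the open immersion $U_\RZ \hookrightarrow U$. Therefore the presheaves are isomorphic, their sheafifications agree, and the base change map is an isomorphism. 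The main obstacle lies in the factorization lemma, whose proof relies on controlling horizontal specializations under étale maps via \cref{bijection_horizontal}; once this geometric point is established, the rest is formal.
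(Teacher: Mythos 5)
Your argument is essentially correct but takes a much more hands-on route than the paper, and there is one step that you gloss over which does need a word of justification.

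The paper's own proof is two lines: by \cref{pi_*_exact}, $\pi_{X,*} = \iota_X^*$ and $\pi_{Z,*} = \iota_Z^*$, and the commutativity of the right-hand square in \cref{functoriality_iota_pi} gives $\iota_X \circ f_{\tau,\RZ} = f_\tau \circ \iota_Z$ as morphisms of sites, whence
\[
 f_{\tau,\RZ}^*\pi_{X,*}\cF \;\cong\; f_{\tau,\RZ}^*\iota_X^*\cF \;\cong\; \iota_Z^*f_\tau^*\cF \;\cong\; \pi_{Z,*}f_\tau^*\cF.
\]
Your ``factorization lemma'' is, in disguise, exactly the geometric content of the right-hand square of \cref{functoriality_iota_pi}: the identity $(U \times_X Z)_{\RZ,Z} = U_{\RZ,X}\times_X Z$ proved there is equivalent to your claim that any $W \to U$ with $W \in Z_{\tau,\RZ}$ factors through $U_\RZ$. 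So you are re-deriving a result that the paper has already established, rather than invoking it. What your approach buys is a concrete picture of the base change map at the presheaf level; what it costs is length and the need to re-justify the geometry.

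The one genuine gap is the final sentence ``the presheaves are isomorphic, their sheafifications agree.'' The two sheafifications in play are over \emph{different} sites. The left-hand side $f_{\tau,\RZ}^*\pi_{X,*}\cF$ is the sheafification over $Z_{\tau,\RZ}$ of the small-colimit presheaf $\cP_1$, while $\pi_{Z,*}f_\tau^*\cF$ is the \emph{restriction to $Z_{\tau,\RZ}$} of the sheaf $f_\tau^*\cF$, which is the sheafification over $Z_\tau$ of the large-colimit presheaf $\cP_2$. Your isomorphism $\cP_1 \cong \cP_2|_{Z_{\tau,\RZ}}$ of presheaves on $Z_{\tau,\RZ}$ therefore yields, after sheafifying over $Z_{\tau,\RZ}$, an isomorphism $\cP_1^{\#} \cong \bigl(\cP_2|_{Z_{\tau,\RZ}}\bigr)^{\#}$ --- but you still have to identify $\bigl(\cP_2|_{Z_{\tau,\RZ}}\bigr)^{\#}$ with $\bigl(\cP_2^{\#}\bigr)|_{Z_{\tau,\RZ}}$, i.e.\ show that sheafification on $Z_\tau$ commutes with restriction to $Z_{\tau,\RZ}$. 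This is true, by the same cofinality-of-RZ-covers argument used in the proof of \cref{sheafification_chech} (covers of an object of $Z_{\tau,\RZ}$ by objects of $Z_{\tau,\RZ}$ are cofinal among all covers in $Z_\tau$, and the \v{C}ech complexes agree), but you should say so explicitly; otherwise the reduction from sheaves to presheaves is not justified. Note that invoking $\pi_* = \iota^*$ on the $Z$-side and the commutativity already recorded in \cref{functoriality_iota_pi} sidesteps this entirely.
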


\begin{proof}
 This follows from \cref{functoriality_iota_pi} and \cref{pi_*_exact}.
\end{proof}

\begin{proposition} \label{adjunction_identity}
 The adjunction map $\id \to \pi_*\pi^*$ is the identity.
\end{proposition}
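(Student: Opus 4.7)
The plan is to compute the adjunction map directly on sections. Let $\cG$ be a sheaf on $X_{\tau,\RZ}$ and fix $U \in X_{\tau,\RZ}$. Since the morphism of sites $\pi$ is induced by the inclusion $i \colon X_{\tau,\RZ} \hookrightarrow X_\tau$, the pushforward $\pi_*$ is restriction along $i$, so $(\pi_*\pi^*\cG)(U) = (\pi^*\cG)(U)$. Writing $\pi^*\cG = (i_p\cG)^\#$ with $i_p\cG(V) = \colim_{V \to U',\, U' \in X_{\tau,\RZ}} \cG(U')$, the adjunction unit at $U$ factors as
\[
 \cG(U) \longrightarrow i_p\cG(U) \longrightarrow (i_p\cG)^\#(U),
\]
and the task is to show that both arrows are identities.

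For the first arrow I would reuse the argument from the proof of \cref{pi_*_exact}: for any $V \in X_{\tau,\RZ}$ the indexing category of the colimit defining $i_p\cG(V)$ has $\id_V$ as an initial object, so $i_p\cG(V) = \cG(V)$ canonically and the transition map is the identity. In particular $i_p\cG$ restricts to the sheaf $\cG$ on $X_{\tau,\RZ}$. For the second arrow I would show that sheafification on $X_\tau$ does not affect the value at $U$. Given any cover $\{V_j \to U\}$ in $X_\tau$, the family $\{V_{j,\RZ} \to U_\RZ = U\}$ is a cover in $X_{\tau,\RZ}$ refining the original, by \cref{iota_morphism_of_sites}; and by \cref{RZ_stable_bc} combined with \cref{composition_RZ}, the fiber products $V_{j,\RZ} \times_U V_{k,\RZ}$ coincide with $(V_j \times_U V_k)_\RZ$ and again lie in $X_{\tau,\RZ}$. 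Hence the whole \v{C}ech nerve of the refinement sits inside $X_{\tau,\RZ}$, where $i_p\cG$ equals the sheaf $\cG$. Every \v{C}ech $H^0$ over a refined cover therefore returns $\cG(U)$, and passing to the colimit over covers leaves the value unchanged through both iterations of the plus construction.

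Combining the two steps, the composition $\cG(U) \to (\pi^*\cG)(U)$ is the identity on $\cG(U)$, so the adjunction map $\id \to \pi_*\pi^*$ is the identity. The main obstacle is the sheafification step: one has to reduce a calculation on the larger site $X_\tau$ to a calculation entirely inside $X_{\tau,\RZ}$, which rests on the cofinality of RZ-refinements of covers (\cref{iota_morphism_of_sites}) and on the fact that fiber products of RZ-opens are again RZ (\cref{RZ_stable_bc}), ensuring that the entire \v{C}ech bicomplex remains in the RZ-subsite where $i_p\cG$ is already a sheaf.
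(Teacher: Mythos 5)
Your proposal is correct and follows essentially the same route as the paper: the initial-object observation shows the presheaf pullback is already the identity on $X_{\tau,\RZ}$-objects, and the \v{C}ech/plus-construction argument (the paper isolates this as a separate lemma, \cref{sheafification_chech}) shows sheafification on $X_\tau$ does not change the value at such objects, using cofinality of RZ-refinements from \cref{iota_morphism_of_sites} and stability of RZ objects under fiber products. The only cosmetic difference is that you inline that lemma and make explicit that the entire \v{C}ech nerve, not just the cover, lies in the RZ subsite.
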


\begin{proof}
 For a sheaf~$\mathcal{F}$ on~$X_{\tau,\RZ}$ and $U \in X_{\tau,\RZ}$ we have $\pi_*\pi^*\mathcal{F}(U) = \pi^*\mathcal{F}(U)$.
 We denote by $\pi^*_P\mathcal{F}$ the presheaf-pullback of~$\mathcal{F}$.
 For an object $U$ of~$X_{\tau}$ it is defined as
 \[
  \pi^*_P\mathcal{F}(U) = \colim_{U \to V} \mathcal{F}(V),
 \]
 where the colimit runs over all morphisms $U \to V$ in~$X_{\tau}$ with $V \in X_{\tau,\RZ}$.
 If~$U$ itself is an object of~$X_{\tau,\RZ}$, it is an initial object of the index category and thus $\pi^*_P\mathcal{F}(U) = \mathcal{F}(U)$.
 By the subsequent \cref{sheafification_chech}, it follows that for $U \in X_{\tau,\RZ}$
 \[
  \pi^*\mathcal{F}(U) = \mathcal{F}(U).
 \]
\end{proof}

\begin{lemma} \label{sheafification_chech}
 Let~$\mathcal{G}$ be a presheaf on~$X_{\tau}$ whose restriction~$\mathcal{F}$ to~$X_{\tau,\RZ}$ is a sheaf.
 Then for the sheafification~$\mathcal{G}^\#$ and $U \in X_{\tau,\RZ}$ we have
 \[
  \mathcal{G}^\#(U) = \mathcal{F}(U).
 \]
\end{lemma}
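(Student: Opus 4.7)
The plan is to compute the sheafification through Grothendieck's twofold plus construction $\mathcal{G}^{\#} = \mathcal{G}^{++}$ and to show that, at a Riemann--Zariski object $U$, this construction can be evaluated using only coverings whose members already lie in $X_{\tau,\RZ}$. Once such a reduction is in place, the hypothesis that $\mathcal{G}|_{X_{\tau,\RZ}} = \mathcal{F}$ is a sheaf will immediately force $\mathcal{G}^+(U) = \mathcal{F}(U)$, and a second application of the plus construction will then yield $\mathcal{G}^{\#}(U) = \mathcal{F}(U)$.

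The first step will be a cofinality argument. Given any covering $(U_i \to U)_i$ of $U$ in $X_\tau$, I would refine it to $(U_{i,\RZ} \to U)_i$; by \cref{U_RZ_open} each $U_{i,\RZ}$ is a quasi-compact open subspace of $U_i$, and this refined family is a covering in $X_{\tau,\RZ}$ by the argument of \cref{iota_morphism_of_sites}. Hence coverings $\mathcal{V} = (V_j \to U)_j$ with every $V_j \in X_{\tau,\RZ}$ are cofinal among all $X_\tau$-coverings of $U$. For such a $\mathcal{V}$, every iterated fibre product $V_{j_0} \times_U \cdots \times_U V_{j_n}$ again lies in $X_{\tau,\RZ}$: étale maps are locally quasi-finite, so the base change of a Riemann--Zariski morphism along an étale one remains Riemann--Zariski by \cref{RZ_stable_bc}, and compositions of Riemann--Zariski morphisms are Riemann--Zariski by \cref{composition_RZ}. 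As a consequence, the Čech complex $\chC^\bullet(\mathcal{V}, \mathcal{G})$ only evaluates $\mathcal{G}$ on objects of $X_{\tau,\RZ}$ and thus coincides termwise with $\chC^\bullet(\mathcal{V}, \mathcal{F})$.

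Assembling the pieces, for $U \in X_{\tau,\RZ}$ I will conclude
\[
 \mathcal{G}^+(U) = \colim_{\mathcal{V}} \chH^0\bigl(\chC^\bullet(\mathcal{V}, \mathcal{G})\bigr) = \colim_{\mathcal{V}} \chH^0\bigl(\chC^\bullet(\mathcal{V}, \mathcal{F})\bigr) = \mathcal{F}^+(U) = \mathcal{F}(U),
\]
the final equality holding because $\mathcal{F}$ is already a sheaf on $X_{\tau,\RZ}$. Applying the same reasoning to $\mathcal{G}^+$, whose restriction to $X_{\tau,\RZ}$ is still $\mathcal{F}$ and hence a sheaf, will then give $\mathcal{G}^{\#}(U) = (\mathcal{G}^+)^+(U) = \mathcal{F}(U)$. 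The main technical obstacle is the cofinality step: it rests on \cref{U_RZ_open} and thereby on the square-completeness hypothesis of the section, which is what guarantees that each $U_{i,\RZ}$ is an honest open subspace of $U_i$ rather than an ad hoc subset. Once that point is secured, the rest of the proof is a purely formal manipulation of the plus construction.
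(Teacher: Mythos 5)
Your proof is correct and follows essentially the same route as the paper's: both reduce the plus ($\check{H}^0$) construction at an RZ object $U$ to covers lying in $X_{\tau,\RZ}$ via the cofinality furnished by \cref{iota_morphism_of_sites} (which rests on \cref{U_RZ_open}), then use that $\mathcal{G}$ and $\mathcal{F}$ agree on such covers and their fiber products to get $\mathcal{G}^+(U) = \mathcal{F}(U)$, and iterate once more. You spell out more explicitly than the paper does that the iterated fiber products $V_{j_0} \times_U \cdots \times_U V_{j_n}$ remain in $X_{\tau,\RZ}$ (via \cref{RZ_stable_bc} and \cref{composition_RZ}), a point the paper leaves implicit when it writes $\check{H}^0(\{U_i\to U\},\mathcal{G}) = \check{H}^0(\{U_i\to U\},\mathcal{F})$.
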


\begin{proof}
 The sheafification $\mathcal{G}^\#$ is given by applying twice the \v{C}ech global sections functor $\check{H}^0$ to $\mathcal{G}$.
 For $U \in X_{\tau}$ we have
 \[
  \check{H}^0(U,\mathcal{G}) = \colim_{(U_i \to U)} \check{H}^0(\{U_i \to U\},\mathcal{G}),
 \]
 where the colimit is taken over all covers of~$U$ in~$X_{\tau}$.
 If~$U$ is contained in~$X_{\tau,\RZ}$, the covers of~$U$ by objects of~$X_{\tau,\RZ}$ are cofinal among all covers by \cref{iota_morphism_of_sites}.
 Moreover, for~$U$ in~$X_{\tau,\RZ}$ we have $\mathcal{G}(U) = \mathcal{F}(U)$.
 For a cover $\{U_i \to U\}_i$ in $X_{\tau,\RZ}$ we thus have
 \[
  \check{H}^0(\{U_i \to U\},\mathcal{G}) = \check{H}^0(\{U_i \to U\},\mathcal{F}) = \mathcal{F}(U).
 \]
 Combining these two pieces of information we obtain for $U \in X_{\tau,\RZ}$:
 \[
  \check{H}^0(U,\mathcal{G}) = \mathcal{F}(U).
 \]
 In other words, $\check{H}^0(-,\mathcal{G})$ is again a presheaf whose restriction to~$X_{\tau,\RZ}$ is the sheaf~$\mathcal{F}$.
 Applying~$\check{H}^0$ a second time, thus yields by the same argument once more a presheaf with this property.
 Moreover, this presheaf is the sheafification~$\mathcal{G}^\#$.
\end{proof}

\begin{proposition} \label{adjunction_isomorphism}
 For a sheaf~$\mathcal{F}$ on~$X_{\tau}$ and $U \in X_{\tau,\RZ}$ the adjunction map $\epsilon(\mathcal{F}): \pi^*\pi_*\mathcal{F} \to \mathcal{F}$ induces isomorphisms
 \[
  H^i(U,\pi^*\pi_*\mathcal{F}) \overset{\sim}{\longrightarrow} H^i(U,\mathcal{F})
 \]
 for all $i \in \N$.
\end{proposition}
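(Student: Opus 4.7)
The strategy is to reduce both $H^i_{X_\tau}(U,\pi^*\pi_*\mathcal{F})$ and $H^i_{X_\tau}(U,\mathcal{F})$ to the single group $H^i_{X_{\tau,\RZ}}(U,\pi_*\mathcal{F})$ and then invoke \cref{adjunction_identity}, which identifies $\pi_*\pi^*\pi_*\mathcal{F}$ with $\pi_*\mathcal{F}$. For this reduction the key input will be that $\pi_*$ preserves injective sheaves.

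First I would show that $\pi^*$ is exact. The morphism of sites $\pi$ comes from the inclusion functor $u\colon X_{\tau,\RZ}\hookrightarrow X_\tau$, so it suffices to check that $u$ preserves finite limits. The terminal object $X$ itself lies in $X_{\tau,\RZ}$, and a fibre product $U\times_W V$ of objects $U,V,W\in X_{\tau,\RZ}$ again lies in $X_{\tau,\RZ}$: by \cref{gf_RZ_then_f_RZ} the structure morphisms $U\to W$ and $V\to W$ are Riemann-Zariski, by \cref{RZ_stable_bc} so is $U\times_W V\to V$, and by \cref{composition_RZ} the composite $U\times_W V\to X$ is Riemann-Zariski. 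Hence $\pi^*$ is left exact, and being a left adjoint it is also right exact, so it is exact. Its right adjoint $\pi_*$ therefore preserves injective objects.

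Next, pick an injective resolution $\mathcal{F}\to\mathcal{I}^\bullet$ in $\Shv(X_\tau)$. By \cref{pi_*_exact}, $\pi_*$ is exact, so $\pi_*\mathcal{I}^\bullet$ is a resolution of $\pi_*\mathcal{F}$, and by the previous step its terms are injective in $\Shv(X_{\tau,\RZ})$. Using $\pi_*\mathcal{I}^j(U)=\mathcal{I}^j(U)$ for every $U\in X_{\tau,\RZ}$, we get
\[
 H^i_{X_{\tau,\RZ}}(U,\pi_*\mathcal{F}) \;=\; H^i(\pi_*\mathcal{I}^\bullet(U)) \;=\; H^i(\mathcal{I}^\bullet(U)) \;=\; H^i_{X_\tau}(U,\mathcal{F}).
\]
The same argument applied to an injective resolution of $\pi^*\pi_*\mathcal{F}$ on $X_\tau$, combined with $\pi_*\pi^*\pi_*\mathcal{F}=\pi_*\mathcal{F}$ from \cref{adjunction_identity}, gives
\[
 H^i_{X_\tau}(U,\pi^*\pi_*\mathcal{F}) \;=\; H^i_{X_{\tau,\RZ}}(U,\pi_*\pi^*\pi_*\mathcal{F}) \;=\; H^i_{X_{\tau,\RZ}}(U,\pi_*\mathcal{F}).
\]
Concatenating the two chains yields the desired isomorphism, and naturality in $\mathcal{F}$ shows it is the map induced by the counit $\epsilon(\mathcal{F})$.

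The step I expect to be most delicate is the exactness of $\pi^*$, since it hinges on $X_{\tau,\RZ}$ being closed under finite limits inside $X_\tau$. Should that route run into trouble, one could fall back on a more hands-on Čech-theoretic argument: use the refinement $(V_i\to U)\mapsto(V_{i,\RZ}\to U)$, which is a cover whenever $U\in X_{\tau,\RZ}$ (as in the proof of \cref{iota_morphism_of_sites} together with the generalization-stability of opens in spectral spaces), to show that $\tau$-hypercovers of $U$ by objects of $X_{\tau,\RZ}$ are cofinal, and then observe that the Čech complexes for $\mathcal{F}$ and $\pi^*\pi_*\mathcal{F}$ against such hypercovers agree because both sheaves restrict to $\pi_*\mathcal{F}$ on $X_{\tau,\RZ}$.
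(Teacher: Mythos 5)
Your proof is correct and structurally the same as the paper's: both arguments establish the comparison isomorphism $H^i_{X_\tau}(U,\mathcal{G})\cong H^i_{X_{\tau,\RZ}}(U,\pi_*\mathcal{G})$ and then combine it with the consequence $\pi_*\pi^*\pi_*\mathcal{F}\cong\pi_*\mathcal{F}$ of \cref{adjunction_identity}. The one place you diverge is in how the comparison is obtained: you first show $\pi^*$ is exact (so that $\pi_*$ preserves injectives) and then push forward an injective resolution, whereas the paper reads the comparison off the Leray spectral sequence for $\pi$, for which only the exactness of $\pi_*$ (\cref{pi_*_exact}) is needed. Your explicit verification that the inclusion $X_{\tau,\RZ}\hookrightarrow X_\tau$ preserves finite limits is sound --- it is essentially a spelled-out version of what is implicitly required for $\pi$ to be a morphism of sites --- but the paper's route to the same comparison is marginally more economical in that it never invokes it.
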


\begin{proof}
 We denote by $\epsilon: \pi^*\pi_* \to \id$ the counit and by $\eta: \id \to \pi_*\pi^*$ the unit of the adjunction $\pi^*\dashv \pi_*$.
 Then the composition
 \[
  \pi_* \overset{\eta}{\longrightarrow} \pi_*\pi^*\pi_* \overset{\epsilon}{\longrightarrow} \pi_*
 \]
 is the identity transformation.
 Moreover, the left hand transformation is an isomorphism by \cref{adjunction_identity}.
 Hence, the right hand transformation is an isomorphism as well.
 Consider the following commutative diagram of natural maps
 \[
  \begin{tikzcd}
   H^i(U,\pi^*\pi_*\mathcal{F})	\ar[r,"\text{edge}"]	\ar[d,"\epsilon"]	& H^i(U,\pi_*\pi^*\pi_*\mathcal{F})	\ar[d,"\epsilon"]\\
   H^i(U,\mathcal{F})			\ar[r,"\text{edge}"]						& H^i(U,\pi_*\mathcal{F})	.
  \end{tikzcd}
 \]
 The horizontal homomorphisms are isomorphisms by \cref{pi_*_exact} and the right vertical homomorphism is an isomorphism by what we have just seen.
 We conclude that also the left vertical homomorphism is an isomorphism.
\end{proof}

\cref{adjunction_isomorphism} allows us to restrict our attention to sheaves of the form $\pi^*\mathcal{F}$.

\section{Specialization} \label{sect_specialization}

Let $X \to S$ be a separated morphism of qcqs schemes.
For this general class of morphisms Temkin introduced in \cite{Tem11} the following concept of a modification:
An $X$-modification of~$S$ is a scheme~$Y$ fitting into a commutative diagram
\[
 \begin{tikzcd}
  X	\ar[r]	\ar[d]	& Y	\ar[dl]	\\
  S,
 \end{tikzcd}
\]
such that $X \to Y$ is dominant and $Y \to S$ is proper.
We set
\[
 \cX = \Spa(X,S).
\]
For every $X$-modification~$S_i$ of~$S$ we have a spectral map
\[
 \cX = \Spa(X,S) \to S_i
\]
that maps a valuation to its center.
These maps are compatible by the valuative criterion for properness and we obtain a map to the limit:
\[
 \Phi: \Spa(X,S) \to \RZ_X(S):= \lim_i S_i.
\]
Writing $\pi_i : \RZ_X(S) \to S_i$ for the projections, we have a natural sheaf
\[
 \cO_{\RZ_X(S)} = \colim_i \pi_i^{-1}\cO_{S_i}
\]
on~$\RZ_X(S)$.
For $x \in \RZ_X(S)$ we denote its image in~$S_i$ by~$s_i$.
If $S_j \to S_i$ is a transition map, we have by construction a map of local rings
\[
 \cO_{S_i,s_i} \to \cO_{S_j,s_j}
\]
Therefore,
\[
 \cO_{\RZ_X(S),x} = \colim_i \cO_{S_i,s_i}
\]
is local and $(\RZ_X(S),\cO_{\RZ_X(S)})$ is the limit of $(S_i,\cO_{S_i})$ in the category of locally ringed spaces.

\begin{proposition}
 The morphism
 \[
  \Phi: \Spa(X,S) \longrightarrow \RZ_X(S)
 \]
 restricts to a homeomorphism
 \[
  \Phi_\RZ : \cX_{\RZ} = \Spa(X,S)_{\RZ} \longrightarrow \RZ_X(S).
 \]
 For every point $x \in \Spa(X,S)_{\RZ}$ we have a natural isomorphism
 \[
  \cO^+_{\cX,x} \cong \colim_i \cO_{S_i,s_i},
 \]
 where the colimit runs over all $X$-modifications $S_i$ of~$S$ and~$s_i$ denotes the image of~$x$ in~$S_i$.
\end{proposition}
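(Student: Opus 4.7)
The plan is to construct $\Phi$ and its inverse on Riemann-Zariski points explicitly, upgrade the bijection to a homeomorphism by spectrality, and compute the stalk via a cofinal family of affinoid neighborhoods coming from affine opens in modifications. The definition of $\Phi$ is routine: for $x \in \Spa(X,S)$ and any $X$-modification $Y$, the valuative criterion of properness applied to $Y \to S$ produces a unique extension of $\Spec k(\supp x) \to X \to Y$ to $\Spec k(x)^+ \to Y$, and the image of the closed point is the center $s_Y$. These centers are compatible under transition, and each center map $\cX \to S_i$ is spectral, so $\Phi$ is continuous.

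For the inverse on $\cX_\RZ$, given $(s_i) \in \RZ_X(S)$, I would form $R := \colim_i \cO_{S_i, s_i}$, a filtered colimit of local rings with local transitions, hence local. By the classical Riemann--Zariski argument carried out by Temkin in \cite{Tem11}, $R$ is in fact a valuation ring of its fraction field $K$, and $K$ is canonically the residue field of a unique point $\xi \in X$ obtained from the system using dominance of $X \to S_i$. The triple $(\xi, R, \phi)$, with $\phi \colon \Spec R \to S$ the induced map, yields a point $x \in \Spa(X,S)$ with $\Phi(x) = (s_i)$; a nontrivial horizontal specialization would replace $R$ by some $R_\p$ with $\p \subsetneq \m_R$ and dominate a proper generization of $s_i$ in some $S_i$, contradicting that $R = \colim_i \cO_{S_i, s_i}$. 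Hence $x \in \cX_\RZ$. Injectivity of $\Phi_\RZ$ follows from the stalk identification below (two RZ points with the same $(s_i)$ share the same valuation ring $R$ and the same support $\xi$).

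The homeomorphism property follows because both $\cX_\RZ$ and $\RZ_X(S) = \lim_i S_i$ are spectral (the former by \cref{RZ_qc}; the latter as a cofiltered limit of spectral spaces along spectral transitions), $\Phi_\RZ$ is continuous, and every quasi-compact open of $\RZ_X(S)$ pulls back from some $S_i$ and hence has quasi-compact preimage in $\cX$; a continuous spectral bijection of spectral spaces is automatically a homeomorphism. For the stalk isomorphism, one uses that a cofinal family of open neighborhoods of $x$ in $\cX$ is given by affinoid opens of the form $\Spa(X \times_Y \Spec A, \Spec A)$, where $Y$ ranges over $X$-modifications of $S$ (with the identification $\Spa(X,S) = \Spa(X,Y)$ from the valuative criterion) and $\Spec A \subseteq Y$ is an affine open containing $s_Y$. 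On each such affinoid, $\cO^+_\cX$ evaluates to $A$, so
\[
 \cO^+_{\cX,x} = \colim_{(Y,A)} A = \colim_Y \cO_{Y,s_Y} = \colim_i \cO_{S_i,s_i}.
\]
That these neighborhoods are cofinal is the spatial content of the deformation-retract picture $\pi \circ \iota = \id$ from \cref{comparison_RZ}.

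The main technical obstacle is the assertion that $R = \colim_i \cO_{S_i,s_i}$ is a valuation ring and that the constructed point is Riemann-Zariski; this is classical Zariski--Riemann theory and forms the heart of Temkin's construction in \cite{Tem11}, which we invoke rather than rederive. Everything else---continuity, bijectivity, the upgrade to a homeomorphism, and the stalk computation---follows by cleanly packaging Huber's valuative criterion with the topology of cofiltered limits of spectral spaces.
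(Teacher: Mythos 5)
The paper disposes of this proposition by citing Temkin's paper directly (Proposition~2.2.1 and Corollary~3.4.7 of \cite{Tem11}), so your attempt to reconstruct the argument is a genuinely different route. Unfortunately two of your intermediate claims are false, and both are load-bearing.

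First, $R := \colim_i \cO_{S_i,s_i}$ is \emph{not} in general a valuation ring, so the triple $(\xi,R,\phi)$ you write down is not a well-formed point of $\Spa(X,S)$. The colimit is only a local ring, and in fact it is the \emph{semivaluation} ring $\cO^+_{\cX,x}$, i.e.\ the preimage of $k(x)^+$ under $\cO_{X,\supp x} \to k(\supp x)$; this is a valuation ring only when $\supp x$ is a generic point of $X$, which fails already for the trivial-valuation RZ points supported at non-generic points. (Concretely, for $X = \mathbb{P}^1_{\mathbb{Z}}$ over $S=\Spec \mathbb{Z}$ and $x$ the trivial valuation at a closed point $\xi$, the colimit is $\cO_{X,\xi}$, a two-dimensional regular local ring.) To build the inverse of $\Phi_\RZ$ you must extract from $R$ separately a support point $\xi$ (via the unique prime $\q$ with $R_\q = \cO_{X,\xi}$) and a valuation ring $R/\q \subseteq k(\xi)$; conflating $R$ with the valuation-ring component is exactly the step where the relative (as opposed to classical) Riemann--Zariski theory differs from what you wrote. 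Relatedly, $\cO^+_{\cX}$ of $\Spa(X\times_Y \Spec A, \Spec A)$ is the integral closure of $A$ in the relevant ring of functions, not $A$ itself, though this particular inaccuracy does wash out in the colimit over modifications.

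Second, your topological upgrade rests on the assertion that ``a continuous spectral bijection of spectral spaces is automatically a homeomorphism.'' This is false: the identity map from a two-point discrete space to a two-point Sierpi\'nski space is a continuous spectral bijection of spectral spaces and not a homeomorphism. A spectral bijection is automatically a homeomorphism in the constructible topology, but passing back to the given topology requires, in addition, that the \emph{inverse} preserve specializations (equivalently, that quasi-compact opens of $\cX_\RZ$ are pulled back from some $S_i$). That is precisely the nontrivial content Temkin supplies, and it needs an actual argument rather than a general topological principle.
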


\begin{proof}
 These are the main results of \cite{Tem11}.
 See in particular Proposition~2.2.1 and Corollary~3.4.7 in loc. cit.
\end{proof}

We want to extend these results to an equivalence of the étale site of $\RZ_X(S)$ and the Riemann-Zariski strongly étale site of $\Spa(X,S)$.

Let $Y$ be an $X$-modification of~$S$.
We want construct a morphism of sites
\[
 \Psi_Y: \Spa(X,S)_{\set,\RZ} \longrightarrow Y_{\et}
\]
by mapping an object $Y' \to Y$ of~$Y_\et$ to
\[
 \Spa(Y'\times_Y X,Y') \to \Spa(X,S). 
\]

\begin{lemma} \label{psi_morphism_sites}
 The above assignment defines a morphism of sites $\Psi_Y: \Spa(X,S)_{\set,\RZ} \longrightarrow Y_{\et}$
\end{lemma}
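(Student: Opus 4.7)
The plan is to verify the three defining properties of a morphism of sites for the functor $u : Y_\et \to \cX_{\set,\RZ}$, $Y' \mapsto \Spa(Y' \times_Y X, Y')$ (where $\cX = \Spa(X,S)$): well-definedness (the image must land in the Riemann-Zariski strongly \'etale site), preservation of fibre products, and continuity (sending \'etale covers to jointly surjective families).

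For well-definedness, observe that $Y' \times_Y X \to X$ is \'etale as a morphism of schemes, so the induced morphism on $\Spa(-,-)$ is \'etale. At any point $y' = (\supp y', k(y')^+, c_{y'})$ with image $x$, the Cartesian diagram
\[
 \begin{tikzcd}
  \Spec k(y')^+ \ar[r,"c_{y'}"] \ar[d] & Y' \ar[d] \\
  \Spec k(x)^+ \ar[r,"c_x"'] & Y
 \end{tikzcd}
\]
exhibits $k(y')^+$ as \'etale -- hence unramified -- over $k(x)^+$; in particular the residue field extension $k(y')/k(x)$ is unramified, making the morphism strongly \'etale. To check the Riemann-Zariski property, let $y'$ be an RZ point with image $x$ and suppose $x \rightsquigarrow x_1$ is a nontrivial horizontal specialization. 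Since $k(y')/k(x)$ is unramified, value groups and characteristic subgroups coincide, so the horizontal specialization lifts to one of $y'$, contradicting the RZ property of $y'$. Hence $x$ is RZ.

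Preservation of fibre products follows from the compatibility of $\Spa$ with pullbacks: the identification
\[
 \Spa\bigl((Y'' \times_{Y'} Y''') \times_Y X,\, Y'' \times_{Y'} Y'''\bigr) \cong u(Y'') \times_{u(Y')} u(Y''')
\]
holds by unwinding the triples defining points on each side.

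For continuity, let $(Y_i \to Y')_i$ be an \'etale cover in $Y_\et$ and take a point $y' \in \Spa(Y' \times_Y X, Y')$. Its schematic support lies above a point of $Y'$, which lifts to some $Y_i$ after a finite separable extension of residue fields. Extending the valuation $k(y')^+$ to the enlarged residue field and lifting the center via the \'etale property of $Y_i \to Y'$ produces a point of $u(Y_i)$ mapping to $y'$; thus the induced family is jointly surjective and hence a cover in $\cX_{\set,\RZ}$. The main obstacle in this plan is the lifting of horizontal specializations in the RZ step: while intuitively clear from the preservation of value groups under \'etale morphisms, it requires the correct identification of the characteristic subgroup associated with $\Spa(Y' \times_Y X, Y')$ and an explicit construction of the lifted valuation ring whose center indeed lives in~$Y'$ over~$S$.
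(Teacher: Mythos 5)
Your overall structure matches the paper's, and the strongly-étale part is essentially the paper's argument up to a small imprecision: the square
$\Spec k(y')^+ \to Y'$ over $\Spec k(x)^+ \to Y$ is not Cartesian in general ($\Spec k(x)^+ \times_Y Y'$ may have several points; $k(y')^+$ is a local ring of it), though the conclusion you draw is correct. There are, however, two genuine gaps.

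In the Riemann--Zariski step you try to lift a nontrivial horizontal specialization of $x$ to one of $y'$, and you yourself flag that this requires comparing characteristic subgroups; that flag is exactly where the argument breaks. The characteristic subgroup of $y'$ in $\Spa(Y'\times_Y X,Y')$ is computed against the ring of integral elements coming from $Y'$, which can be strictly larger than the one coming from $S$, so $y'$ may admit \emph{fewer} horizontal specializations than $x$ does and the lift you want may simply not exist. The paper does not prove this by hand: it invokes \cite{HueSch20}, Lemma~12.7, which gives the Riemann--Zariski property for $\Spa(U,T)\to\Spa(X,S)$ whenever the square of schemes has universally closed diagonal $U\to X\times_S T$, and observes this holds trivially here because the square is Cartesian. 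Your direct route would need to supply the characteristic-subgroup identification, and as stated it does not.

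In the continuity step your lifting goes in the wrong order. You first lift (the image in $Y'$ of) the schematic support of $y'$ into some $Y_i$ and then try to ``lift the center via the étale property of $Y_i\to Y'$.'' But étale morphisms do not satisfy the valuative criterion: given $\Spec R_i \to Y'$ with a lift of its generic point to $Y_i$, the extension $\Spec R_i\to Y_i$ exists only if the image of the \emph{closed} point of $\Spec R_i$ (the center $s$) lies in the image of $Y_i\to Y'$. You have only arranged that a generalization of $s$ does, and those two conditions can be met by different members of the cover. The paper lifts the center first: choose $i$ and $s_i\in Y'_i$ over $s$, take the local ring $R_i$ of $\Spec k(y)^+\times_{Y'} Y'_i$ at a point over the closed point of $\Spec k(y)^+$ mapping to $s_i$; then the center map $\Spec R_i\to Y'_i$ comes for free from the second projection, and one reads off the support $y_i$ from the generic fiber.
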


\begin{proof}
 First, we need to convince ourselves that $\Spa(Y'\times_Y X,Y') \to \Spa(X,S)$ is indeed strongly étale.
 It is clear by \cite{Hu96}, Corollary~1.7.3~(iii) that it is étale.
 For a point $x = (\supp x,k(x)^+,c_x) \in \Spa(X,S)$ the morphism $c_x: \Spec k(x)^+ \to S$ uniquely lifts to $c_{x,Y}: \Spec k(x)^+ \to Y$ by the valuative criterion for properness.
 The base change
 \[
  \Spec k(x)^+ \times_Y Y' \to \Spec k(x)^+
 \]
 of $Y' \to Y$ is étale.
 The preimages of $x$ correspond to the local rings of $\Spec k(x)^+ \times_Y Y'$ at points over the closed point of~$\Spec k(x)^+$.
 These are all unramified, hence $\Spa(Y'\times_Y X,Y') \to \Spa(X,S)$ is strongly étale.
 
 It is clear that $\Spa(Y'\times_Y X,Y') \to \Spa(X,S)$ is a Riemann-Zariski morphism as
 \[
  \begin{tikzcd}
   Y' \times_Y X	\ar[r]	\ar[d]	& X	\ar[d]	\\
   Y'				\ar[r]			& Y
  \end{tikzcd}
 \]
 is Cartesian and thus has universally closed diagonal (see \cite{HueSch20}, Lemma~12.7).

 Now we show that coverings are mapped to coverings.
 Let $(Y'_i \to Y')_{i \in I}$ be a covering in $Y_\et$.
 Given a point $y \in \Spa(Y' \times_Y X,Y')$ we have to find $i \in I$ and $y_i \in \Spa(Y'_i \times_Y X,Y'_i)$ mapping to $y$.
 Let $s \in Y'$ be the center of~$k(y)^+$ in~$Y'$, i.e. the image of the closed point of $\Spec k(y)^+$ under the map $c_y:\Spec k(y)^+ \to Y'$.
 There is $i \in I$ and $s_i \in Y'_i$ mapping to~$s$.
 Consider the fiber product
 \[
  \begin{tikzcd}
   \Spec k(y)^+ \times_{Y'} Y'_i	\ar[r]	\ar[d,"\text{étale}"']	& Y'_i	\ar[d,"\text{étale}"]	\\
   \Spec k(y)^+						\ar[r,"\phi"]					& Y'.
  \end{tikzcd}
 \]
 There is a point of $\Spec k(y)^+ \times_{Y'} Y'_i$ lying over the closed point of $\Spec k(y)^+$ and mapping to $s_i \in Y'_i$.
 Its local ring~$R_i$ is a valuation ring which is an unramified extension of~$k(y)^+$.
 Restricted to the generic point $\Spec k(y)$ of $\Spec k(y)^+$, the map~$c_y$ factors through the inclusion $\Spec k(y) \hookrightarrow Y' \times_Y X$ (with trivial residue field extension).
 We obtain a factorization
 \[
  \Spec k(y) \times_{Y'} Y'_i \hookrightarrow X \times_Y Y'_i \to Y'_i,
 \]
 also with trivial residue field extensions.
 The generic point of~$\Spec R_i$ is one of the points of $\Spec k(y) \times_{Y'} Y_i$.
 We denote its image in $Y'_i \times_Y X$ by $y_i$.
 Writing $c_i$ for the map $\Spec R_i \to Y'_i$ we obtain a preimage $(y_i,R_i,\phi_i)$ of $y$.
\end{proof}

Our goal is to show that the morphisms of sites $\Psi_Y$ induce an equivalence of topoi in the limit over all modifications~$Y$.
The main results we need for this are provided by the following two lemmas.

\begin{lemma} \label{refine_cover_model}
 Let $\cU \to \cX = \Spa(X,S)$ be a strongly étale Riemann-Zariski morphism.
 Then there exists an open covering of~$\cU$ by subsets of the form $\Psi_Y^{-1}(Y')$ for some $X$-modification~$Y$ of~$S$ and $Y' \to Y$ étale.
 \[
  \begin{tikzcd}
   Y' \times_Y X	\ar[r]						\ar[d]	& Y'	\ar[d,"\text{étale}"]	\\
   X				\ar[r,"\text{dominant}"]	\ar[dr]	& Y		\ar[d,"\text{proper}"]	\\
   														& S.
  \end{tikzcd}
 \]
\end{lemma}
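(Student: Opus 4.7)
My plan is to construct, for each Riemann-Zariski point $u \in \cU$, one open neighborhood of the required form $\Psi_Y^{-1}(Y')$, and let these cover~$\cU$. This suffices because every point of $\cU$ has a Riemann-Zariski point in the closure of its orbit under specialization (in a spectral space every point specializes to a closed point, and closed points of a pseudoadic space are trivially Riemann-Zariski), while opens of a spectral space are stable under generalization. Thus any family of opens covering the Riemann-Zariski locus automatically covers all of~$\cU$, and no quasi-compactness argument is needed here because the lemma only asks for an open cover, not a finite one.

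Fix a Riemann-Zariski point $u \in \cU$ and set $x = f(u) \in \cX$. Because $f$ is a Riemann-Zariski morphism, $x$ is itself a Riemann-Zariski point. After shrinking to an affinoid open neighborhood, I may assume $\cU \to \cX$ is presented as a standard étale neighborhood; the strongly étale hypothesis then guarantees that the extension of local rings $\cO^+_{\cX,x} \to \cO^+_{\cU,u}$ is étale in the scheme-theoretic sense (no wild or inseparable residue-field extension). The preceding proposition identifies $\cO^+_{\cX,x}$ with $\colim_i \cO_{S_i,s_i}$, the colimit running over all $X$-modifications $S_i$ of~$S$, where $s_i$ is the center of~$x$ in~$S_i$. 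Finite presentation of the étale extension lets me descend the defining polynomial equations together with a witness for invertibility of the Jacobian to some finite stage $\cO_{S_i,s_i}$; standard spreading out for étale morphisms of schemes then yields an étale morphism $Y'' \to V$ over a Zariski open neighborhood $V \subseteq S_i$ of~$s_i$. Applying Nagata compactification (and a further blow-up if necessary to preserve both dominance from~$X$ and properness over~$S$) promotes this data to an étale morphism $Y' \to Y$ for an honest $X$-modification $Y$ of~$S$.

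Finally I must check that $\Psi_Y^{-1}(Y') = \Spa(Y' \times_Y X, Y')$ coincides with an open neighborhood of~$u$ in~$\cU$. Both spaces are strongly étale and Riemann-Zariski over~$\cX$ and, by construction, share the same $\cO^+$-stalk at~$u$; equality on some open neighborhood follows from the local structure theorem for étale adic morphisms. The main obstacle is the descent step, specifically producing a genuine $X$-modification $Y$ of~$S$ (as opposed to a mere Zariski open of some existing~$S_i$) that supports the étale model of~$\cU$ while keeping both dominance from~$X$ and properness over~$S$ intact. This is the technical heart of the argument and is where the interaction between blow-ups, Nagata compactification, and the spreading out of étale morphisms has to be handled with care.
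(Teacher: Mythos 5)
Your overall strategy is the same as the paper's: reduce to constructing, for each Riemann--Zariski point $u \in \cU$, an open neighborhood of the required form, using Temkin's colimit description $\cO^+_{\cX,x} \cong \colim_i \cO_{S_i,s_i}$ and spreading out to a finite stage. The observation that covering the Riemann--Zariski locus suffices (since opens are stable under generalization and every point specializes to a Riemann--Zariski point) is sound and matches the paper's setup.

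However, there is a genuine gap at what you yourself call the ``technical heart.'' You assert that the strongly \'etale hypothesis ``guarantees that the extension of local rings $\cO^+_{\cX,x} \to \cO^+_{\cU,u}$ is \'etale in the scheme-theoretic sense,'' but strong \'etaleness alone does not give this. Strong \'etaleness controls the residue-field extensions of points, but \emph{not} how the rings of integral elements relate: one could a priori have the map on $\cO^+$-rings fail to be \'etale even when the map on $\cO$-rings is. The role of the Riemann--Zariski hypothesis in the paper's proof is precisely to rule this out. You use that hypothesis only to conclude $x$ is a Riemann--Zariski point, but the paper uses it more substantially to show that the square
\[
  \begin{tikzcd}
   \Spec \cO_{\cU,u}	\ar[r]	\ar[d]	& \Spec \cO_{\cX,x}	\ar[d]	\\
   \Spec \cO^+_{\cU,u}	\ar[r]			& \Spec \cO^+_{\cX,x}
  \end{tikzcd}
\]
has universally closed diagonal, hence is Cartesian (since $\cO_{\cU,u}$ and $\cO^+_{\cU,u} \otimes_{\cO^+_{\cX,x}} \cO_{\cX,x}$ are both localizations of $\cO^+_{\cU,u}$). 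Only after this Cartesian identification, combined with the structure of strongly \'etale morphisms, does one conclude that $\cO^+_{\cU,u}$ is a local ring of an \'etale $\cO^+_{\cX,x}$-algebra. This step cannot be omitted or replaced by the strong-\'etaleness assertion alone.

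A smaller point: the appeal to Nagata compactification at the end is unnecessary and slightly confused. Once you have spread out the \'etale data to an \'etale morphism $Y'' \to V$ over an open $V \subseteq S_i$, the composition $Y'' \to V \hookrightarrow S_i$ is already \'etale, and $S_i$ is already an $X$-modification of~$S$ (proper over~$S$, with $X$ dominating it). You may simply take $Y = S_i$ and $Y' = Y''$; no compactification or blow-up is needed, and introducing one only risks destroying \'etaleness over the boundary.
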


\begin{proof}
 We may assume that~$\cU$ is of the form $\Spa(U,T)$ coming from a diagram
 \[
  \begin{tikzcd}
   U	\ar[r,"\text{étale}"]	\ar[d]	& X	\ar[d]	\\
   T	\ar[r,"\text{f.t.}"]			& S
  \end{tikzcd}
 \]
 with universally closed diagonal.
 Let $u \in \cU = \Spa(U,T)$ be a Riemann-Zariski point with image $x \in \cX = \Spa(X,S)$.
 Since $\cU \to \cX$ is Riemann-Zariski, the diagram
 \[
  \begin{tikzcd}
   \Spec \cO_{\cU,u}	\ar[r]	\ar[d]	& \Spec \cO_{\cX,x}	\ar[d]	\\
   \Spec \cO^+_{\cU,u}	\ar[r]			& \Spec \cO^+_{\cX,x}.
  \end{tikzcd}
 \]
 has universally closed diagonal.
 But $\cO_{\cU,u}$ and $\cO^+_{\cU,u} \otimes_{\cO^+_{\cX,x}} \cO_{\cX,x}$ are both localizations of~$\cO^+_{\cU,u}$, which implies that the diagram is even Cartesian.
 The morphism
 \[
  \Spa(\cO_{\cU,u},\cO^+_{\cU,u}) \to \Spa(\cO_{\cX,x},\cO^+_{\cX,x})
 \]
 is a localization of the strongly étale morphism
 \[
  \cU_x: = \cU \times_\cX \Spa(\cO_{\cX,x},\cO^+_{\cX,x}) \to \Spa(\cO_{\cX,x},\cO^+_{\cX,x}).
 \]
 The local rings of $\cO_{\cU_x}^+$ are obtained by ring theoretic localization of its global sections as the latter are already semivaluation rings (\cite{HueAd}, Lemma~11.13).
 Using \cite{HueAd}, Lemma~6.6, we conclude that $\cO^+_{\cU,u}$ is a local ring of an étale $\cO^+_{\cX,x}$-algebra.
 
 By \cite{Tem11} we can write $\cO^+_{\cX,x}$ as a colimit
 \[
  \cO^+_{\cX,x} \cong \colim_Y \cO_{Y,y}
 \]
 running over all $X$-modifications~$Y$ of~$S$ where~$y$ denotes the center of~$x$ in~$Y$.
 Hence, we find a modification~$Y$ and an étale morphism $Y' \to Y$ such that~$\cO^+_{\cU,u}$ is a local ring of
 \[
  Y' \times_Y \Spec \cO^+_{\cX,x}.
 \]
 Replacing~$Y'$ by an open subscheme, we achieve that
 \[
  \cU_u := \Spa(Y' \times_Y X,Y') \to \Spa(X,S)
 \]
 factors through a neighborhood of~$u$ in $\cU = \Spa(U,T)$.
 The spaces~$\cU_u$ for varying~$u$ cover~$\cU$ and provide the required refinement.
\end{proof}

\begin{lemma} \label{fully_faithful}
 Let $Y \to S$ be an $X$-modification such that $Y$ is relatively normal in~$X$.
 Then $\Psi_Y^{-1}$ is fully faithful.
\end{lemma}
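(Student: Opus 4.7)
The strategy is to show that the unit $\id \to \Psi_{Y,*} \Psi_Y^{-1}$ is an isomorphism of functors on $\Shv(Y_\et)$. Evaluated at a sheaf $\cF$ and an étale $V \to Y$, this is the claim
\[
 \cF(V) \stackrel{\sim}{\longrightarrow} \Psi_Y^{-1}\cF\bigl(\Spa(V \times_Y X, V)\bigr).
\]
Write $u \colon Y_\et \to \cX_{\set,\RZ}$, $V \mapsto \Spa(V \times_Y X, V)$, for the continuous functor underlying $\Psi_Y$.

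First I would prove that $u$ is fully faithful on categories. Functoriality of $\Spa(-,-)$ supplies one direction. For the converse, a morphism of $\cX$-adic spaces $u(V_1) \to u(V_2)$ encodes a morphism of $X$-schemes $V_1 \times_Y X \to V_2 \times_Y X$ (on supports) and, at every Riemann--Zariski point, a compatible morphism of valuation centers in $V_1$ and $V_2$ (from the integral structure sheaf). The hypothesis that $Y$ is relatively normal in $X$ is exactly what is needed to descend these two compatible data to a unique $Y$-morphism $V_1 \to V_2$: an étale $Y$-scheme is determined by its pullback to $X$ together with its integrality datum over $Y$.

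Once $u$ is fully faithful, the presheaf pullback at $u(V)$ is a colimit of $\cF(V')$ over morphisms $u(V) \to u(V')$, which by the previous step is indexed by a category with initial object $\id_V$, and hence equals $\cF(V)$. To finish, one must show that sheafification does not change this value. Arguing as in the proof of \cref{sheafification_chech}, this reduces to showing that covers of $u(V)$ of the form $(u(V_i) \to u(V))_i$ for étale covers $(V_i \to V)$ in $Y_\et$ are cofinal among strongly étale Riemann--Zariski covers of $u(V)$. By \cref{refine_cover_model} applied in the slice over $u(V)$, any such cover is refined by pullbacks of étale maps $V' \to V'_0$ for some $(V \times_Y X)$-modification $V'_0$ of $V$. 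Relative normality of $Y$ in $X$ base-changes to relative normality of $V$ in $V \times_Y X$; this should force these refinements to descend, possibly after further étale refinement, to étale covers of $V$ itself.

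The technical heart is this cofinality: exploiting relative normality to absorb arbitrary $(V \times_Y X)$-modifications of $V$ into $V$ itself, up to étale refinement. The fully-faithfulness of $u$ on the categorical level is a more direct descent once the valuation-theoretic data at Riemann--Zariski points is unpacked, and the passage from fully-faithful $u$ to a bijection of presheaf sections is formal.
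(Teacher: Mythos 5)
The lemma asserts that $\Psi_Y^{-1}$, i.e.\ the underlying functor $Y_\et \to \Spa(X,S)_{\set,\RZ}$ sending $Y' \mapsto \Spa(Y' \times_Y X, Y')$, is fully faithful as a functor between \emph{categories} --- this is condition~(d) of the SGA4 criterion invoked in \cref{psi_equivalence}, not a statement about sheaves. Your ``first step'' (fully faithfulness of $u$ on categories) is therefore the entire lemma; the subsequent discussion of the unit $\id \to \Psi_{Y,*}\Psi_Y^{*}$, presheaf pullbacks, and sheafification is a detour proving a different (and, for a fixed $Y$, almost certainly false) statement. The paper only obtains an equivalence of topoi after passing to the colimit over \emph{all} modifications $Y$; for a single fixed $Y$ there is no reason to expect $\Psi_{Y}^{*}$ to be fully faithful on sheaves. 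In particular, your closing cofinality claim --- that arbitrary $(V \times_Y X)$-modifications of $V$ can be absorbed back into $V$ after étale refinement --- is unsubstantiated and runs counter to the whole point of the construction: one takes the colimit precisely because a single modification does \emph{not} suffice.

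More seriously, the ``first step'' --- the actual content of the lemma --- is only asserted, not proved. You write that relative normality of $Y$ in $X$ ``is exactly what is needed to descend these two compatible data to a unique $Y$-morphism'' and that ``an étale $Y$-scheme is determined by its pullback to $X$ together with its integrality datum over $Y$''. Neither claim is obvious, and you give no argument. The paper's proof is genuinely technical: for faithfulness, if $a,b \colon Y'_1 \to Y'_2$ induce the same map $\Spa(X'_1,Y'_1) \to \Spa(X'_2,Y'_2)$, then $a \times_Y X = b \times_Y X$, and since $X'_i \to Y'_i$ is dominant one concludes $a = b$ topologically and (via injectivity of the structure sheaf maps on affines) also on sheaves. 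For fullness, given a morphism $g$ of adic spaces one first extracts the induced support morphism $f \colon X'_1 \to X'_2$, forms the graph $\Gamma_f \subseteq X'_1 \times_X X'_2$ and its scheme-theoretic image $\bar\Gamma_f \subseteq Y'_1 \times_Y Y'_2$, then verifies the valuative criterion for the projection $\pi \colon \bar\Gamma_f \to Y'_1$ using the valuation-theoretic data carried by $g$; this makes $\pi$ proper and quasi-finite hence finite, and \emph{only then} does relative normality force $\pi$ to be an isomorphism, yielding the desired $Y'_1 \to Y'_2$. Your proposal skips precisely this graph/scheme-theoretic-image/valuative-criterion machinery, which is the heart of the proof.
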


\begin{proof}
 Take two objects $Y'_1$ and $Y'_2$ in $Y_\et$.
 We set for $i=1,2$
 \[
  X'_i = Y'_i \times_Y X.
 \]
 Suppose we have two morphisms $a,b: Y'_1 \to Y'_2$ that induce the same morphism
 \[
  \Spa(X'_1,Y'_1) \longrightarrow \Spa(X'_2, Y'_2).
 \]
 Then
 \[
  a \times_Y X = b \times_Y X : X'_1 \longrightarrow X'_2.
 \]
 Since $X'_1 \to Y'_1$ and $X'_2 \to Y'_2$ are dominant, we conclude by continuity that $a=b$ as morphisms of topological spaces.
 It remains to show that the homomorphisms on the structure sheaves defined by~$a$ and~$b$ are the same.
 We take affine open subsets $U'_1$ and $U'_2$ of $Y'_1$ and $Y'_2$, respectively, such that $a(U'_1) \subseteq U'_2$.
 In the diagram
 \[
  \begin{tikzcd}[column sep=3cm]
   \cO_{X'_1}(X'_1)							& \cO_{X'_2}(X'_2)	\ar[l,"(a \times_Y X)^\#= (b \times_Y X)^\#"']	\\
   \cO_{Y'_1}(Y'_1)							\ar[u,hookrightarrow]	& \cO_{Y'_2}(Y'_2)							\ar[l,shift left=1.5,"b^\#"]	\ar[l,shift right=1.5,"a^\#"']	\ar[u,hookrightarrow]
  \end{tikzcd}
 \]
 the vertical arrows are injective because $X'_i \to Y'_i$ is dominant and $Y'_i$ is affine.
 We conclude that $a^\#=b^\#$ and thus $a=b$.
 
 Now suppose we are given a morphism
 \[
  g: \Spa(X'_1,Y'_1) \longrightarrow \Spa(X'_2, Y'_2).
 \]
 over $\Spa(X,S) = \Spa(X,Y)$.
 It induces a morphism
 \[
  f: X'_1 \to X'_2
 \]
 over~$X$ of support schemes.
 We consider the graph
 \[
  \Gamma_f \subseteq X'_1 \times_X X'_2
 \]
 of~$f$ and its scheme theoretic image
 \[
  \bar{\Gamma}_f \subseteq Y'_1 \times_Y Y'_2.
 \]
 We have a diagram
 \[
  \begin{tikzcd}
   \Gamma_f			\ar[r,closed]	\ar[d,"\text{dominant}"']	& X'_1 \times_X X'_2	\ar[d]	\\
   \bar{\Gamma}_f	\ar[r,closed]	\ar[d,"\pi"']				& Y'_1 \times_Y Y'_2	\ar[dl,"\text{étale}"]	\ar[dr,"\text{étale}"]	\\
   Y'_1				\ar[dr,"\text{étale}"']						&																		& Y'_2	\ar[dl,"\text{étale}"]	\\
   																& Y
  \end{tikzcd}
 \]
 and we want to show that~$\pi$ is an isomorphism.
 We know it is quasifinite as a composition of a closed immersion with an étale morphism.
 
 Moreover, $X'_1 \to Y'_1$ is a basechange of $X \to Y$ along an étale morphism.
 As $Y$ is relatively normal in~$X$, this implies that $Y'_1$ is relatively normal in~$X'_1$ (see \cite[Tag 03GV]{stacks-project}).
 But $\Gamma_f \cong X'_1$ via the first projection, so $Y'_1$ is relatively normal in~$\Gamma_f$.
 If we show that~$\pi$ is proper (hence finite), we are done as we would then have a factorization
 \[
  \Gamma_f \to \bar{\Gamma}_f \overset{\pi}{\longrightarrow} Y'_1
 \]
 of $\Gamma_f \to Y'_1$ into a dominant morphism followed by a finite morphism.
 Since $Y'_1$ is relatively normal in~$\Gamma_f$, this forces~$\pi$ to be an isomorphism.
 
 In order to show that~$\pi$ is proper we verify the valuative criterion in the form stated in \cref{valuative_criterion} below.
 Let $x_1$ be a point of $X'_1 \cong \Gamma_f$ and $R_1$ a valuation ring of $k(x_1)$ fitting into the solid arrow diagram
 \[
  \begin{tikzcd}
   \Spec k(x_1)	\ar[r]	\ar[dd]					& \Gamma_f \cong X'_1		\ar[d]	\\
   												& \bar{\Gamma}_f			\ar[d]	\\
   \Spec R_1	\ar[r,"\phi_1"]	\ar[ur,dashed]	& Y'_1.
  \end{tikzcd}
 \]
 We have to show that there exists a unique dotted arrow making the diagram commute.
 The triple $(x_1,R_1,\phi_1)$ defines a point of $\Spa(X'_1,Y'_1)$.
 We set $(x_2,R_2,\phi_2) := g(x_1,R_1,\phi_1)$.
 Then $x_2= f(x_1)$ and $R_2 = R_1|_{k(x_2)}$.
 As $g$ is defined over $\Spa(X,Y)$, we have a commutative diagram
 \[
  \begin{tikzcd}
   \Spec R_1	\ar[rr]	\ar[d,"\phi_1"']	&		& \Spec R_2	\ar[d,"\phi_2"]	\\
   Y'_1			\ar[dr]						&		& Y'_2		\ar[dl]		\\
   											& Y.
  \end{tikzcd}
 \]
 This defines a morphism
 \[
  \Spec R_1 \longrightarrow Y'_1 \times_Y Y'_2.
 \]
 Restricted to the generic point, this morphism factors as
 \[
  \Spec k(x_1) \longrightarrow \Gamma_f \hookrightarrow X'_1 \times_X X'_2 \longrightarrow Y'_1 \times_Y Y'_2.
 \]
 Therefore there is a unique factorization
 \[
  \Spec R_1 \longrightarrow \bar{\Gamma}_f \longrightarrow Y'_1 \times_Y Y'_2
 \]
 by the definition of~$\bar{\Gamma}_f$ as the scheme theoretic image of~$\Gamma_f$.
 This produces the dotted arrow we were looking for. 
\end{proof}

\begin{lemma} \label{valuative_criterion}
 Let $f: T \to Z$ and $g:V \to T$ be morphisms of schemes with~$g$ dominant.
 Then $f$ satisfies the valuative criterion for properness if and only if for every $v \in V$  and for every valuation ring $R$ of $k(v)$ fitting into a commutative diagram
 \[
  \begin{tikzcd}
   \Spec k(v)	\ar[r,hookrightarrow]	\ar[dd,hookrightarrow]	& V	\ar[d,"g"]	\\
   																& T	\ar[d,"f"]	\\
   \Spec R		\ar[r]					\ar[ur,dashed]			& Z
  \end{tikzcd}
 \]
 the dotted arrow exists and is unique.
\end{lemma}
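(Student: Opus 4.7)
The implication from the usual valuative criterion to the restricted one is immediate: every pair $(v, R)$ with $R$ a valuation ring of $k(v)$ and a compatible $\Spec R \to Z$ is a special case of the general test data $(\Spec K \to T, \Spec A \to Z)$, taking $K = k(v)$ and $A = R$, so the dotted arrow exists and is unique by assumption.

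For the nontrivial direction, given an arbitrary test diagram $\Spec K \to T$ landing at a point $t \in T$ together with a commuting $\Spec A \to Z$ (where $A$ is a valuation ring of $K$), the plan is to pass to a common field extension of $K$ and $k(v)$ for a suitably chosen $v \in V$, apply the hypothesis there, and descend. Using that $g$ is dominant, one picks $v \in V$ with $g(v) = t$; this is possible at least when $t \in g(V)$, e.g.\ for $t$ a generic point of an irreducible component of $T$. Since then $k(t) \hookrightarrow k(v)$, one forms a field $\Omega$ containing both $K$ and $k(v)$ as compatible extensions of $k(t)$ (for instance, as an algebraic closure of $(K \otimes_{k(t)} k(v))/\mathfrak{M}$ for some maximal ideal $\mathfrak{M}$), extends $A$ to a valuation ring $\tilde A \subseteq \Omega$ with $\tilde A \cap K = A$, and sets $R := \tilde A \cap k(v)$, which is a valuation ring of $k(v)$.

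The key compatibility to check is that $\Spec A \to Z$ induces a $\Spec R \to Z$ compatible with $\Spec k(v) \to V \to Z$: the local ring map $\mathcal{O}_{Z, z_0} \to A$, where $z_0$ is the image of the closed point of $\Spec A$, has kernel equal to the prime corresponding to $f(t)$ (the image of the generic point of $\Spec A$ in $Z$), and so factors as $\mathcal{O}_{Z, z_0} \twoheadrightarrow k(f(t)) \hookrightarrow k(t) \hookrightarrow K$; hence its image in $\Omega$ lies in $k(t) \subseteq k(v)$, so it factors through $R = \tilde A \cap k(v)$ and defines the required $\Spec R \to Z$. Applying the hypothesis now produces the unique lift $\Spec R \to T$, and composing with the morphism $\Spec \tilde A \to \Spec R$ induced by $R \hookrightarrow \tilde A$ gives $\Spec \tilde A \to T$. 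To descend to $\Spec A \to T$, note that the induced local ring map $\mathcal{O}_{T, t'} \to \tilde A$ (for $t'$ the closed-point image) factors through $A = \tilde A \cap K$, since on the generic point it corresponds to $\Spec K \to T$ and so lands in $K$. Uniqueness follows from uniqueness in the hypothesis by the same descent applied to a difference of two candidate lifts.

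The main obstacle is the case $t \notin g(V)$, when dominance only provides $v \in V$ with $g(v) = \eta$ a strict generalization of $t$. Then $k(t)$ and $k(v)$ no longer share a common $T$-subfield, and the na\"ive "compositum over $k(t)$" construction fails. To treat this case one instead chooses a valuation ring $R \subseteq k(v)$ that dominates $\mathcal{O}_{T, t}$ via the composition $\mathcal{O}_{T, t} \twoheadrightarrow k(\eta) \hookrightarrow k(v)$, so that the induced $\Spec R \to T$ has center $t$, and then redoes the common-field extension and descent while carefully tracking the specialization $\eta \rightsquigarrow t$ through both $T$ and $Z$. This bookkeeping of the specialization data is the technical heart of the argument.
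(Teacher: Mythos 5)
The paper states \cref{valuative_criterion} without supplying a proof, so there is nothing to compare your argument against directly; I will instead assess it on its own terms.

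Your easy direction is fine, and your treatment of the case $t \in g(V)$ is essentially correct: forming a compositum $\Omega$ of $K$ and $k(v)$ over $k(t)$, extending $A$ to $\tilde A \subset \Omega$ with $\tilde A \cap K = A$, taking $R = \tilde A \cap k(v)$, applying the hypothesis, and descending works, because the local ring map $\mathcal{O}_{T,t''} \to \tilde A$ has image in $k(t)$ (the generic point of $\Spec \tilde A$ maps to $t$), hence in $\tilde A \cap k(t) \subseteq \tilde A \cap K = A$, and symmetrically lands in $R$ for the uniqueness descent.

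The genuine gap, as you yourself flag, is the case $t \notin g(V)$, and your sketch there does not hold together. Dominance only guarantees that $t$ specializes from some $\eta := g(v)$; then $k(t)$ does not embed into $k(v)$, and there is no common field to form $\Omega$ over, so the entire ``extend and restrict'' mechanism has nothing to hook onto. Your proposal to ``choose $R \subseteq k(v)$ dominating $\mathcal{O}_{T,t}$ via $\mathcal{O}_{T,t} \to k(\eta) \hookrightarrow k(v)$'' produces only a valuation ring of $k(v)$ with center $t$ in $T$; it does not by itself incorporate the data $(\Spec K \to T, \Spec A \to Z)$, so it is not yet a test diagram for the hypothesis, and you give no mechanism for transporting the desired lift $\Spec A \to T$ back from a lift $\Spec R \to T$. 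The missing ingredient is a \emph{composition of valuations}: first reduce (by the standard intersection-with-residue-field trick, essentially the same descent you use in the other case) to the situation $K = k(t)$ and $A \subseteq k(t)$; then, with $\eta = g(v) \rightsquigarrow t$, use Chevalley to produce a valuation ring $B \subseteq k(v)$ dominating the image of $\mathcal{O}_{T,t}$ in $k(\eta)$, so that $\Spec B \to T$ has generic point $\eta$, closed point $t$, and residue field containing $k(t)$; extend $A$ to $\tilde A$ inside the residue field $\kappa(B)$ with $\tilde A \cap k(t) = A$, and form the composite $B' := \{x \in B : \bar x \in \tilde A\}$, a valuation ring of $k(v)$. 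One then checks that $\mathcal{O}_{Z,z_0} \to A$ factors through $B'$ (giving the test map $\Spec B' \to Z$ compatible with $\Spec k(v) \to V \to T \to Z$), applies the hypothesis to $(v,B')$, and descends: uniqueness of the hypothesis applied to $(v,B)$ identifies the intermediate point of $\Spec B' \to T$ with $t$, after which the residue-field argument recovers $\Spec A \to T$ and a separate application gives uniqueness. None of this bookkeeping is in your write-up; the final paragraph only names the difficulty, and the ``heart of the argument'' remark is an acknowledgment that the proof is incomplete rather than a proof.
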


We now consider the limit of the morphisms of sites~$\Psi_Y$:
\[
 \Psi : \Spa(X,S)_{\set,\RZ} \longrightarrow \lim_Y Y_\et
\]

\begin{proposition} \label{psi_equivalence}
 The morphism of sites~$\Psi$ induces an equivalence of topoi.
\end{proposition}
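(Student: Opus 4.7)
The strategy is to invoke the Comparison Lemma for morphisms of sites (SGA 4, III.4.1): I will show that the underlying functor $u \colon \varinjlim_Y Y_\et \to \cX_{\set,\RZ}$, $(Y,Y' \to Y) \mapsto \Spa(Y' \times_Y X, Y')$, is fully faithful, that every object of $\cX_{\set,\RZ}$ is covered by objects in its essential image, and that covers in $\cX_{\set,\RZ}$ of such objects can be refined by covers pulled back from some $Y_\et$. Here I view $\lim_Y Y_\et$ concretely as the $2$-colimit of categories whose objects are pairs $(Y,Y')$ with $Y$ an $X$-modification of $S$ and $Y' \to Y$ étale, with a morphism $(Y_1,Y'_1) \to (Y_2,Y'_2)$ represented, after passing to a common $X$-modification $Y_3$ dominating both $Y_1$ and $Y_2$, by a morphism $Y_3 \times_{Y_1} Y'_1 \to Y_3 \times_{Y_2} Y'_2$ of étale $Y_3$-schemes.

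First I would reduce to cofinal subsystems. The $X$-modifications $Y$ that are relatively normal in $X$ form a cofinal subsystem: given any $X$-modification $Y$, its normalization in $X$ is again an $X$-modification and dominates $Y$. Restricting to this cofinal system, \cref{fully_faithful} directly gives full faithfulness of each $\Psi_Y^{-1}$. Since a $2$-colimit of fully faithful functors indexed by a filtered category is again fully faithful, the functor $u$ is fully faithful as soon as one checks that morphisms in $\varinjlim_Y Y_\et$ coincide whenever their images in $\cX_{\set,\RZ}$ coincide; but two morphisms over some $Y_3$ whose base changes to $\cX$ agree must agree on $Y_3$ by the dominance argument inside the proof of \cref{fully_faithful}.

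Essential surjectivity up to covers is provided exactly by \cref{refine_cover_model}: every strongly étale Riemann-Zariski $\cU \to \cX$ admits an open covering by subspaces of the form $\Psi_Y^{-1}(Y')$, and such a covering is automatically a cover in $\cX_{\set,\RZ}$ since open coverings are strongly étale Riemann-Zariski. Conversely, a cover $(Y'_i \to Y')$ in some $Y_\et$ pulls back under $u$ to a cover in $\cX_{\set,\RZ}$ by \cref{psi_morphism_sites}. It remains to verify the third condition of the Comparison Lemma: any cover of $u(Y,Y')$ in $\cX_{\set,\RZ}$ is refined by the image of a cover in $\varinjlim_Y Y_\et$. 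Given a strongly étale Riemann-Zariski cover $(\cU_j \to u(Y,Y'))_j$, apply \cref{refine_cover_model} to each $\cU_j$ to obtain a refinement by objects $\Psi_{Y_{j,k}}^{-1}(Y'_{j,k})$. By quasi-compactness of $u(Y,Y')$ (inherited from $\cX$) finitely many suffice; pass to a common $X$-modification $\tilde Y$ dominating $Y$ and all the $Y_{j,k}$, and replace $\tilde Y$ by its normalization in $X$. Pulling everything back to $\tilde Y$ gives a single étale cover of $\tilde Y \times_Y Y'$ in $\tilde Y_\et$ whose image under $u$ refines the original cover.

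The step I expect to be the main technical obstacle is this last compatibility between covers: one has to keep careful track of how the étale covers on different $Y_{j,k}$ are identified once pulled back to a common modification, and to check that replacing by the normalization in $X$ does not destroy the étaleness of the refining morphisms. This is where relative normality plays its role, via the standard fact used in \cref{fully_faithful} that relative normality is preserved under étale base change. Once all three hypotheses of the Comparison Lemma are verified, it follows that $u$ induces an equivalence of topoi, which is exactly the claim of the proposition.
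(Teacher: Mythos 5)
Your proposal is essentially the same as the paper's proof: both invoke the comparison criterion of SGA~4, III~4.1 and rely on the same trio of lemmas, namely \cref{refine_cover_model} for the covering condition, \cref{fully_faithful} (together with the observation that one may replace any modification by its integral closure in~$X$) for full faithfulness, and \cref{psi_morphism_sites} for continuity of~$\Psi$. The main differences are cosmetic. You reduce to the cofinal system of relatively normal modifications at the outset, whereas the paper invokes that reduction only in the full-faithfulness step; both are fine. Your phrasing of the third condition (arbitrary covers of $u(Y,Y')$ in~$\cX_{\set,\RZ}$ can be refined by images of covers from $\lim_Y Y_\et$) differs from the paper's condition~(c) (a family in $\lim_Y Y_\et$ is a cover \emph{if and only if} its image is), but given conditions (b) and (d) these are equivalent formulations of cocontinuity, and your deduction of the refinement condition from \cref{refine_cover_model} plus passage to a common dominating modification is sound. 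One point the paper verifies that you do not mention is that $\lim_Y Y_\et$ admits finite limits and that $\Psi^{-1}$ commutes with them (the paper's condition~(a)); depending on which exact version of the comparison lemma you rely on this may be necessary, and the check (finite limits can be computed at a finite level of the filtered system; $\Psi^{-1}$ commutes with fiber products and equalizers) is short but should be included.
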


\begin{proof}
 By \cite{SGA4}, III 4.1 we need to show the following:
 \begin{enumerate}[(a)]
  \item $\lim_Y Y_\et$ has finite limits and~$\Psi^{-1}$ commutes with these;
  \item Every object $V$ of $\Spa(X,S)_{\set,\RZ}$ has a covering by objects of the form $\Psi^{-1}(U)$;
  \item A family $(U_i \to U)$ in $\lim_Y Y_\et$ is a covering if and only if $(\Psi^{-1}(U_i) \to \Psi^{-1}(U))$ is a covering.
  \item $\Psi^{-1}$ is fully faithful.
 \end{enumerate}
 Let us show (a).
 We know that $Y_\et$ has finite limits for every $X$-modification~$Y$ of~$S$ as this is true for any scheme.
 The objects of $\lim_Y Y_\et$ come from a finite level and as in a finite limit only finitely many objects are involved, the limit can be computed at a finite level.
 So finite limits exist in $\lim_Y Y_\et$.
 It is clear from the construction that $\Psi^{-1}$ commutes with fiber products.
 We are left with showing that it commutes with coequalizers.
 Let $Y$ be an $X$-modification of~$S$ and let $Y'$ be the equalizer of a diagram
 \[
  \begin{tikzcd}
   (Y'_1	\ar[r,shift left=1.5,"a"]	\ar[r,shift right=1.5,"b"']	& Y'_2)
  \end{tikzcd}
 \]
 in $Y_\et$.
 Then
 \[
  Y' = Y'_1 \times_{Y'_1 \times_Y Y'_1} (Y'_1 \times_{Y'_2} Y'_1),
 \]
 where the fiber product $Y'_1 \times_{Y'_2} Y'_1$ is taken with respect to the maps~$a$ and~$b$ and $Y'_1 \to Y'_1 \times_Y Y'_1$ is the diagonal.
 Since~$\Psi^{-1}$ commutes with fiber products, we are done with checking condition~(a).

 Condition~(b) is satisfied by \cref{refine_cover_model}.
 Let us now treat~(c).
 We have already verified in \cref{psi_morphism_sites} that $\Psi_Y$ (and hence $\Psi$) is a morphism of sites.
 So $\Psi^{-1}$ maps coverings to coverings.
 For the converse take a family of morphisms $(Y'_i \to Y')_{i \in I}$ in $Y_\et$ for some $X$-modification~$Y$ of~$S$ and assume that
 \[
  (\Spa(Y'_i \times_Y X,Y'_i) \to \Spa(Y' \times_Y X,Y'))_{i \in I}
 \]
 is a strongly étale covering.
 Take $y' \in Y'$.
 Since $X \to Y$ is dominant and this property is preserved by flat base change, we find a point $x \in Y' \times_Y X$ whose image in~$Y'$ is a generalization of~$y'$.
 Choosing a valuation ring~$R$ of $k(x)$ with center $y'$ we obtain a point $(x,R,\phi)$ of $\Spa(Y' \times_Y X,Y')$.
 The center of any preimage of $(x,R,\phi)$ in some $\Spa(Y'_i \times_Y X,Y'_i)$ gives a preimage of~$y'$ in $Y'_i$.
 
 Finally, (d) follows from \cref{fully_faithful} noting that we may always replace an $X$-modifi\-cation~$Y$ of~$S$ by its integral closure in~$X$.
\end{proof}

\begin{corollary} \label{cohomology_models}
 Let $X \to S$ be a separated morphism of schemes and~$\mathcal{F}$ a torsion sheaf on $\Spa(X,S)_\set$.
 For an $X$-modification~$Y$ of $S$ we denote by~$\varphi_Y$ the composition of the projection
 \[
  \pi : \Spa(X,S)_\set \longrightarrow \Spa(X,S)_{\set,\RZ}
 \]
 with the morphism of sites
 \[
  \Psi_Y: \Spa(X,S)_{\set,\RZ} \to Y_\et
 \]
 studied above.
 Then we obtain natural isomorphisms
 \[
  H^i(\Spa(X,S)_\set,\mathcal{F}) \cong \colim_Y H^i(Y_\et,\varphi_{Y,*}\mathcal{F})
 \]
\end{corollary}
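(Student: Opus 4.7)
The plan is to chain three isomorphisms, proceeding from $\cX_\set$ through $\cX_{\set,\RZ}$ to the limit topos $\lim_Y Y_\et$ and finally down to a filtered colimit at finite level, where $\cX = \Spa(X,S)$.

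First, I would apply the Leray spectral sequence to the morphism of sites $\pi \colon \cX_\set \to \cX_{\set,\RZ}$. By \cref{pi_*_exact} the functor $\pi_*$ is exact, so $R^q\pi_*\cF$ vanishes for $q \ge 1$ and the spectral sequence degenerates. Noting that $\cX$ (equipped with the identity morphism, which is trivially Riemann--Zariski) is the terminal object of both sites, taking global sections produces
\[
 H^i(\cX_\set, \cF) \cong H^i(\cX_{\set,\RZ}, \pi_*\cF).
\]

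Second, I would transport along the equivalence of topoi $\Psi \colon \cX_{\set,\RZ} \simeq \lim_Y Y_\et$ from \cref{psi_equivalence}. Writing $p_Y$ for the projection $\lim_Y Y_\et \to Y_\et$, the factorization $p_Y \circ \Psi = \Psi_Y$ yields $p_{Y,*}\Psi_*\pi_*\cF = \Psi_{Y,*}\pi_*\cF = \varphi_{Y,*}\cF$. Since an equivalence of topoi preserves cohomology, we obtain
\[
 H^i(\cX_{\set,\RZ},\pi_*\cF) \cong H^i\bigl(\lim_Y Y_\et,\,\Psi_*\pi_*\cF\bigr).
\]

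Third, I would invoke the continuity theorem for étale cohomology on cofiltered limits of coherent topoi (SGA 4, Exposé VI, \S 8, in particular Théorème 8.7.3, and Exposé VII, \S 5) to conclude
\[
 H^i\bigl(\lim_Y Y_\et,\,\Psi_*\pi_*\cF\bigr) \cong \colim_Y H^i(Y_\et,\varphi_{Y,*}\cF).
\]
The hypotheses to verify are that the index category of $X$-modifications is cofiltered (two modifications $Y_1,Y_2$ are dominated by the scheme-theoretic image of $X$ in $Y_1\times_S Y_2$), that the transitions $Y'\to Y$ induce coherent morphisms of the corresponding étale topoi (automatic, since they are morphisms of qcqs schemes), and that $\Psi_*\pi_*\cF$ is torsion (inherited from $\cF$, as both $\pi_*$ and $\Psi_*$ preserve filtered colimits of abelian sheaves in this setting). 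Composing the three isomorphisms gives the desired identification.

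The principal obstacle is step three. The most familiar continuity statement concerns cofiltered systems of qcqs schemes with \emph{affine} transition maps, whereas here the transitions $Y'\to Y$ between $X$-modifications are only proper. One must therefore rely on the genuinely topos-theoretic version for cofiltered limits of coherent topoi with coherent morphisms; alternatively, one can present $\Psi_*\pi_*\cF$ as the filtered colimit $\colim_Y p_Y^{-1}\varphi_{Y,*}\cF$ and combine the fact that cohomology on a coherent topos commutes with filtered colimits of torsion sheaves with the proper base change theorem applied to each individual transition $Y'\to Y$ of schemes.
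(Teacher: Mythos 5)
Your three-step route — Leray along $\pi$ using exactness of $\pi_*$, transport across the equivalence $\Psi$, then continuity of cohomology on the cofiltered limit topos $\lim_Y Y_\et$ — is correct and is in substance the same argument the paper intends, but you make explicit a step that the paper's own proof suppresses to the point of being misleading. The paper's proof reads ``the limit of $\Psi_{Y,*}$ is exact\dots\ Therefore, $\varphi_{Y,*}$ is exact and the result follows from the Leray spectral sequence,'' but exactness of $\lim_Y \Psi_{Y,*}$ (which is just $\Psi_*$, the pushforward of an equivalence) does not imply that each individual $\Psi_{Y,*}$ is exact, and indeed the $\Psi_{Y,*}$ have nonvanishing higher derived functors in general. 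What is actually needed is precisely the continuity theorem you invoke: identify $\Psi_*\pi_*\cF$ with $\colim_Y p_Y^*\varphi_{Y,*}\cF$ and use that cohomology of a coherent limit topos with values in such a colimit is $\colim_Y H^i(Y_\et,\varphi_{Y,*}\cF)$, which requires $\cF$ torsion. Your observation that the transition maps between $X$-modifications are proper rather than affine, so that one must use the topos-theoretic version of continuity (SGA~4, VI \S 8) rather than the more familiar scheme-theoretic one with affine transitions, is a real subtlety that the paper does not address; the alternative you sketch (presenting $\Psi_*\pi_*\cF$ as a filtered colimit and feeding the individual transitions through scheme-theoretic proper base change) is a legitimate hands-on workaround. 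In short: your proof is correct, follows the paper's outline, and is more careful at the crucial juncture.
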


\begin{proof}
 We know that $\pi_*$ is exact by \cref{pi_*_exact} and that the limit of $\Psi_{Y,*}$ over all compactifications~$Y$ is exact because it is an equivalence of categories by \cref{psi_equivalence}.
 Therefore, $\varphi_{Y,*}$ is exact and the result follows from the Leray spectral sequence.
\end{proof}

We need a version of \cref{cohomology_models} for pseudoadic spaces.
We fix an adic space of the form $\cX = \Spa(X,S)$ for a morphism of schemes $X \to S$.
Then we consider the pseudoadic space defined by a closed subset $\cZ \subseteq \Spa(X,S)$.

\begin{lemma} \label{image_specialization_closed}
 Let~$Y$ be an $X$-modification of~$S$.
 The image~$Z_Y$ of~$\cZ$ in~$Y$ is closed.
\end{lemma}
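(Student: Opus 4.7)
The plan is to check that $Z_Y$ is a proconstructible subset of the spectral space $Y$ that is stable under specialization; any proconstructible subset of a spectral space that is closed under specialization is closed.

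Proconstructibility follows because the center map $\varphi_Y \colon \Spa(X,S) \to Y$ (well-defined since $Y \to S$ is proper, by the valuative criterion) is spectral, hence continuous in the constructible topology. As $\cZ$ is closed in the spectral space $\Spa(X,S)$, it is proconstructible, i.e.\ quasi-compact in the constructible topology, so $Z_Y = \varphi_Y(\cZ)$ is quasi-compact in the constructible topology of $Y$, and hence proconstructible.

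For stability under specialization, take $x \in \cZ$ with $\varphi_Y(x) = y$ and a specialization $y \rightsquigarrow y'$ in $Y$. The plan is to exhibit a specialization $x \rightsquigarrow x'$ in $\Spa(X,S)$ with $\varphi_Y(x') = y'$; closedness of $\cZ$ then gives $x' \in \cZ$ and hence $y' \in Z_Y$. To construct $x'$, observe that the lift $\tilde c_x \colon \Spec k(x)^+ \to Y$ sends the closed point to $y$ and thus induces an embedding $k(y) \hookrightarrow k(x)^\succ$. I would choose a valuation ring $V \subseteq k(y)$ dominating $\cO_{\overline{\{y\}},y'}$ (where $\overline{\{y\}}$ denotes the integral closed subscheme of $Y$ with generic point $y$), extend $V$ to a valuation ring $W$ of $k(x)^\succ$, and define $k(x')^+$ as the preimage of $W$ under the quotient $k(x)^+ \twoheadrightarrow k(x)^\succ$. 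Then $k(x')^+$ is a valuation ring of $k(\supp x)$ contained in $k(x)^+$, and because the image of $\cO_S$ in $k(x)^\succ$ lies in $V \subseteq W$, the center map $c_{x'}$ is defined, so $x' := (\supp x, k(x')^+, c_{x'})$ is a genuine point of $\Spa(X,S)$. By the characterization in \cref{specialization_continuous} of vertical specializations (via $\Gamma_v = \Gamma_{v'}/H$), the combination of equal support and $k(x')^+ \subseteq k(x)^+$ makes $x'$ a vertical specialization of $x$. Finally, applying the valuative criterion for $Y \to S$ to the valuation $W$ identifies the center of $x'$ on $Y$ with the center of $W$, which equals the center of $V$, namely $y'$.

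The main difficulty lies in this last construction. One has to keep careful track of the correspondence between shrinking the valuation ring and the direction of specialization in $\Spa(X,S)$, and to verify that the valuation ring $W$ chosen on the specialization field $k(x)^\succ$ produces a valuation ring $k(x')^+$ on $k(\supp x)$ whose induced center on $Y$ is exactly the prescribed $y'$, rather than some intermediate or generizing point.
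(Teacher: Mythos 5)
Your proof is correct and follows essentially the same route as the paper: proconstructibility from the fact that the center map is spectral, followed by closure under specialization via composition of valuations (the paper phrases this as lifting a valuation $v$ of $k(y)$ with center $y'$ to a valuation $v'$ of the specialization field $k(z)^\succ$ and forming $v'\circ z$, which is exactly your preimage-of-$W$ construction). Your version is more explicit about the bookkeeping — verifying that $c_{x'}$ exists and pinning down the center — but the underlying argument is the same.
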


\begin{proof}
 The center map
 \[
  c : \Spa(X,S) \longrightarrow Y
 \]
 is continuous and spectral.
 Therefore, it maps the proconstructible subset~$\cZ$ of $\Spa(X,S)$ to a proconstructible subset of~$Y$.
 So~$Z_Y$ is proconstructible.
 Moreover, it is closed under specialization:
 If $y_1 \in Z_Y$ specializes to $y_2 \in Y$, we can find a valuation~$v$ of $k(y_1)$ with center~$y_2$.
 Furthermore, we find $z \in \cZ$ with center~$y_1$.
 Lifting~$v$ to some valuation~$v'$ of the specialization field $k(z)^\succ \supseteq k(y_1)$, we can compose~$z$ with~$v'$ to obtain a point $z' := v' \circ z \in \cZ$ with center~$y_2$.
\end{proof}

For every $X$-modification~$Y$ of~$S$ we denote by~$Z_Y$ the image of~$\cZ$ in~$Y$.
It is a closed subset by \cref{image_specialization_closed}.
In the diagram
\[
 \begin{tikzcd}
  \cZ	\ar[r,"\varphi_{Z_Y}"]	\ar[d,closed,"\iota"']	& Z_Y	\ar[d,closed,"\iota_Y"]	\\
  \cX	\ar[r,"\varphi_Y"]								& Y
 \end{tikzcd}
\]
the vertical map~$\iota$ is a morphism of pseudoadic spaces.
Hence, it induces a morphism of sites $\cZ_\set \to \cX_\set$.
Similarly, the morphism of schemes~$\iota_Y$ induces a morphism of étale sites $Z_{Y,\et} \to Y_\et$.
Moreover, $\varphi_Y$ induces a morphism of sites $\cX_\set \to Y_\et$ that sends an étale map $U \to Y$ to $\Spa(X \times_Y U, U)$, which is strongly étale over $\cX = \Spa(X,S)$.
One might think that in the same way~$\varphi_{Z_Y}$ induces a morphism of sites $\cZ_\set \to Z_{Y,\et}$.
But this is not so obvious as by definition the strongly étale morphisms to~$\cZ_\set$ are étale morphisms to $\cX = \Spa(X,S)$ (with unramified residue field extensions over~$\cZ$).
However, at the level of topoi we do get a morphism.

\begin{lemma} \label{specialization_map_topoi}
 There is a unique morphism of topoi
 \[
  \varphi_{Z_Y} : \cZ_\set \longrightarrow Z_{Y,\et}
 \]
 making the diagram
 \[
  \begin{tikzcd}
   \cZ_\set	\ar[r,"\varphi_{Z_Y,*}"]	\ar[d,"\iota_*"]	& Z_{Y,\et}	\ar[d,"\iota_{Y,*}"]	\\
   \cX_\set	\ar[r,"\varphi_{Y,*}"]							& Y_\et
  \end{tikzcd}
 \]
 commute.
\end{lemma}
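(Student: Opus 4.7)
The plan is to exploit the fact that $\iota_{Y,*}$, as pushforward along a closed immersion of schemes, is fully faithful with essential image the subcategory of sheaves on $Y_\et$ that are supported on $Z_Y$ (i.e., have trivial stalks at geometric points outside $Z_Y$, or equivalently vanish on étale opens $V\to Y$ mapping into $Y\setminus Z_Y$). The main geometric input is to verify that for every $\cF\in\Shv(\cZ_\set)$, the sheaf $\varphi_{Y,*}\iota_*\cF$ already lies in this essential image. For any étale morphism $V\to Y$ whose image avoids $Z_Y$, unwinding definitions gives
\[
 \varphi_{Y,*}\iota_*\cF(V) \;=\; \cF\bigl(\Spa(X\times_Y V,\,V)\times_\cX\cZ\bigr),
\]
and every point of $|\cZ|$ centers in $|Z_Y|$ by the very definition of $Z_Y$, so the pseudoadic fiber product above is empty. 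Hence the sheaf value is terminal, which is the desired support condition.

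Granted this, I would simply define $\varphi_{Z_Y,*} := \iota_Y^*\circ\varphi_{Y,*}\circ\iota_*$. Since $\iota_{Y,*}$ is fully faithful and its essential image contains $\varphi_{Y,*}\iota_*\cF$, the counit gives a natural isomorphism $\iota_{Y,*}\varphi_{Z_Y,*}\cong\varphi_{Y,*}\iota_*$, which is exactly the required commutativity of the square. To upgrade $\varphi_{Z_Y,*}$ to the pushforward of a morphism of topoi, I would exhibit its left adjoint as $\varphi_{Z_Y}^* := \iota^*\circ\varphi_Y^*\circ\iota_{Y,*}$. Left exactness is then automatic: $\iota^*$ and $\varphi_Y^*$ are inverse images of morphisms of sites and $\iota_{Y,*}$ is a right adjoint. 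The adjunction $\varphi_{Z_Y}^*\dashv\varphi_{Z_Y,*}$ unwinds into a chain of natural bijections using $\iota^*\dashv\iota_*$, $\varphi_Y^*\dashv\varphi_{Y,*}$, and finally $\iota_{Y,*}\varphi_{Z_Y,*}\cong\varphi_{Y,*}\iota_*$ combined with the full faithfulness of $\iota_{Y,*}$ in the last step.

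For the uniqueness clause, any candidate morphism of topoi $\psi\colon\cZ_\set\to Z_{Y,\et}$ fitting into the square satisfies $\iota_{Y,*}\psi_* = \varphi_{Y,*}\iota_*$, and full faithfulness of $\iota_{Y,*}$ determines $\psi_*$ up to unique natural isomorphism; its pullback $\psi^*$ is then pinned down as its left adjoint. I expect the only substantive step to be the emptiness claim for the pseudoadic fiber product in the first paragraph; this should follow directly from the description of points of $\Spa(X,S)$ as triples $(\supp x, k(x)^+, c_x)$ together with the definition of $Z_Y$ as the image of $\cZ$ in $Y$. All remaining verifications are formal manipulations with adjunctions and with the full faithfulness of a closed pushforward.
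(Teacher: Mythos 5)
Your proposal is correct and takes essentially the same approach as the paper: the key geometric step in both is that $\varphi_{Y,*}\iota_*\cF$ vanishes (is terminal) on étale opens of $Y$ disjoint from $Z_Y$, because the corresponding pseudoadic fiber has empty support. The only difference is that the paper delegates the remaining formal bookkeeping (full faithfulness of the closed pushforward, factorization, uniqueness) to SGA4, Exposé IV, Proposition 9.1.4, whereas you reconstruct it by hand via $\varphi_{Z_Y,*}:=\iota_Y^*\circ\varphi_{Y,*}\circ\iota_*$ and the explicit adjoint; both routes are fine.
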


\begin{proof}
 Since $\iota_Y : Z_Y \hookrightarrow Y$ is a closed immersion, $Z_{Y,\et} \to Y_\et$ is an immersion of topoi.
 By \cite{SGA4}, Exposé~IV, Proposition~9.1.4 it thus suffices to show that the essential image of $(\varphi_Y \circ \iota)_*$ lies in the essential image of~$\iota_{Y,*}$.
 In order to show this we take an étale morphism $U \to Y$ whose image is disjoint from~$Z_Y$ and verify that for a sheaf~$\cF$ on~$\cZ_\set$ we have
 \[
  (\varphi_Y \circ \iota)_* \cF(U) = \{*\}.
 \]
 The left hand side equals
 \[
  \cF(\Spa(U \times_Y X,U),\varnothing).
 \]
 since the image of~$U$ in~$Y$ is disjoint from~$Z_Y$, so no point of~$\cZ$ lifts to $\Spa(U \times_Y X,U)$.
 This means that $\varnothing \to (\Spa(U \times_Y X,U),\varnothing)$ is a covering in~$\cZ_\set$ and thus
 \[
  \cF(\Spa(U \times_Y X,U),\varnothing) = \cF(\varnothing) = \{*\}.
 \]
\end{proof}

\begin{corollary} \label{pseudoadic_colim_models}
 Let $X \to S$ be a morphism of schemes and $\cZ \subseteq \Spa(X,S)$ a closed subset.
 For an $X$-modification~$Y$ of~$S$ we write~$Z_Y$ for the image of~$\cZ$ in~$Y$ and
 \[
  \varphi_{Z_Y} \colon \Shv(\cZ_\set) \longrightarrow \Shv(Z_{Y,\et})
 \]
 for the morphism of topoi from \cref{specialization_map_topoi}.
 Then for every abelian sheaf~$\cG$ on~$\cZ_\set$
 \[
  H^i(\cZ_\set,\cG) \cong \colim_Y H^i(Z_{Y,\et},\varphi_{Z_Y,*} \cG).
 \]
 Moreover, the statement is also true for any sheaf of groups if $i \le 1$ and for any shef of sets for $i=0$.
\end{corollary}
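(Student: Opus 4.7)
The plan is to reduce \cref{pseudoadic_colim_models} to \cref{cohomology_models} by pushing forward along the closed immersion of pseudoadic spaces $\iota\colon \cZ \hookrightarrow \cX$. I would proceed in four steps: (1) show that $\iota_*$ is exact on strongly étale abelian sheaves and satisfies $H^i(\cX_\set,\iota_*\cG) \cong H^i(\cZ_\set,\cG)$ for all $i \ge 0$; (2) apply \cref{cohomology_models} to the sheaf $\iota_*\cG$ on $\cX_\set$; (3) substitute $\varphi_{Y,*}\iota_*\cG = \iota_{Y,*}\varphi_{Z_Y,*}\cG$ via Lemma \ref{specialization_map_topoi}; (4) invoke the scheme-theoretic identity $H^i(Y_\et,\iota_{Y,*}\cH) \cong H^i(Z_{Y,\et},\cH)$ for the closed immersion of schemes $\iota_Y$, together with the observation that filtered colimits over the cofiltered index system of $X$-modifications commute with the scheme-level identification. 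Concatenating the resulting four isomorphisms yields the desired statement in the abelian case.

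Step (1) is where the technical content sits. Since the underlying adic spaces of $\cZ$ and $\cX$ coincide and $|\cZ|$ is a proconstructible, specialization-closed subset of $|\cX|$, the morphism $\iota$ should behave like a closed immersion of topoi in the strongly étale setting. Concretely, for a strongly étale $U \to \cX$ one has $(\iota_*\cG)(U) = \cG(U\times_\cX\cZ)$, where the pullback is the pseudoadic space $(\underline{U},|U|\cap g^{-1}|\cZ|)$ for $g\colon \underline{U}\to\underline{\cX}$ the structure map. Exactness then follows from the stalkwise description $(\iota_*\cG)_x = \cG_x$ for $x \in |\cZ|$ and trivial stalk otherwise, the trivial-stalk statement being precisely the disjointness argument carried out in the proof of Lemma \ref{specialization_map_topoi} (where opens disjoint from $\cZ$ are covered by $\varnothing$ in $\cZ_\set$). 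The cohomological identification $H^i(\cX_\set,\iota_*\cG) \cong H^i(\cZ_\set,\cG)$ then drops out of the Leray spectral sequence, since $R^q\iota_* = 0$ for $q>0$ by the same disjointness.

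For the nonabelian statement the chain of identifications applies verbatim in degree $0$ (each step preserves sections) and extends to degree $1$ for sheaves of groups using the nonabelian Leray five-term sequence attached to $\iota$ and $\varphi_Y$ together with the vanishing of $R^1\iota_*$ on the complement of $\cZ$; the filtered colimit compatibility in low degrees holds on the quasi-compact, quasi-separated sites involved. The main obstacle I foresee is the careful verification in step (1) that $\iota$ really induces a closed immersion of topoi in the strongly étale pseudoadic framework, in particular matching up the definitions of strongly étale covers and unramified residue field conditions between $\cZ_\set$ and $\cX_\set$ so that the expected stalk formula and vanishing of higher direct images hold. Once this identification is in place, steps (2)--(4) are formal concatenations of \cref{cohomology_models} with classical facts about closed immersions of schemes.
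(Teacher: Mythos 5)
Your proposal is correct and takes essentially the same route as the paper: push forward along the closed immersion $\iota\colon\cZ\hookrightarrow\cX$, identify $H^i(\cZ_\set,\cG)\cong H^i(\cX_\set,\iota_*\cG)$, apply \cref{cohomology_models} to $\iota_*\cG$, and unravel $\varphi_{Y,*}\iota_*\cG = \iota_{Y,*}\varphi_{Z_Y,*}\cG$ via the commutative square of \cref{specialization_map_topoi} together with the scheme-level cohomology identity for the closed immersion $\iota_Y$. The only substantive addition in your write-up is that you spell out the stalk-level verification that $R^q\iota_*=0$ for $q>0$, which the paper treats as implicit (and records later as \cref{closed_immersion_acyclic}); the remainder is the same chain of identifications.
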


\begin{proof}
 We consider the commutative square of morphisms of topoi
 \[
  \begin{tikzcd}
   \Shv(\cZ_\set)	\ar[r,"\varphi_{Z_Y}"]	\ar[d,hook,"iota"']	& \Shv(Z_{Y,\et})	\ar[d,"\iota_Y"]	\\
   \Shv(\cX_\set)	\ar[r,"\varphi_Y"']							& \Shv(Y_\et)
  \end{tikzcd}
 \]
 from \cref{specialization_map_topoi}.
 Then
 \[
  H^i(\cZ_\set,\cG) \cong H^i(\Spa(X,S),\iota_* \cG)
 \]
 and
 \[
  H^i(Z_{Y,\et},\varphi_{Z_Y,*}\cG) \cong H^i(Y_\et,\iota_{Y,*}\varphi_{Z_Y,*}\cG) \cong H^i(Y_\et,\varphi_{Y,*}\iota_*\cG).
 \]
 Therefore, the assertion follows from \cref{cohomology_models} applied to~$\iota_*\cG$.
\end{proof}

\section{Proper morphisms of discretely ringed adic spaces} \label{sect_proper}

In this section we want to understand the proper adic spaces over a discretely ringed adic space $\cS = \Spa(S,S^+)$ for a morphism of schemes $S \to S^+$.
It will turn out that they are all of the form $\Spa(X,S^+)$ for a proper scheme~$X$ over~$S$, that we call support scheme.
The construction of the support scheme is not limited to proper spaces over~$\cS$ but quite general.

\begin{lemma} \label{support_scheme}
 Let~$\cX$ be a discretely ringed adic space.
 Then there is a scheme~$X$ and an open quotient map (denoted \emph{support morphism}) of locally ringed spaces
 \[
  \supp : (\cX,\cO_\cX) \longrightarrow (X,\cO_X)
 \]
 whose fibers are the sets of points with a common vertical generalization and such that $\cO_X \to \supp_*\cO_\cX$ is an isomorphism.
\end{lemma}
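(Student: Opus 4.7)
My plan is to construct the support morphism locally on affinoid patches and then glue. For an affinoid discretely ringed space $\cX = \Spa(A,A^+)$ I would define $\supp(v) = \supp v \in \Spec A$; this map is continuous and spectral by inspection. To check that it is surjective and open, I would compute the image of a rational subset $R(f_1,\ldots,f_n/g) = \{v : v(f_i) \le v(g) \ne 0\}$ and show it equals the basic open $D(g) \subseteq \Spec A$. The inclusion is immediate since $g \notin \supp v$ for any $v$ in the rational subset. Conversely, for each $\p \in D(g)$, the trivial valuation $v_\triv$ on $k(\p)$ lies in $\Spa(A,A^+)$ (being $\le 1$ on $A^+$) and satisfies $v_\triv(f_i) \le 1 = v_\triv(g)$. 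Since rational subsets form a basis of $\cX$ and $D(g)$'s form a basis of $\Spec A$, this yields surjectivity, openness, and the topological quotient property.

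Next I would identify fibers and verify the sheaf condition. Vertical generalizations preserve supports, so points sharing a common vertical generalization lie in the same fiber. Conversely the trivial valuation $v_\triv$ on $k(\p)$ constructed above is a vertical generalization (quotient of $\Gamma_v$ by itself) of every valuation with support $\p$, so fibers are exactly the equivalence classes under "having a common vertical generalization". For the structure sheaf, $\supp^{-1}(D(g))$ is the rational open $\{v(g)\ne 0\}$; since $A$ is discrete its ring of sections is $A[1/g] = \cO_{\Spec A}(D(g))$, giving an isomorphism $\cO_{\Spec A} \to \supp_*\cO_\cX$ on a basis, hence globally. The induced map on stalks $A_\p \to \cO_{\cX,v}$ (for $\supp v = \p$) is local because the maximal ideal of the stalk consists of sections vanishing at $v$, which pull back precisely to $\p A_\p$.

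For the general case I would cover $\cX$ by affinoid discretely ringed opens $\cU_i = \Spa(A_i,A_i^+)$, form affine schemes $X_i = \Spec A_i$, and glue them along the support maps. The key compatibility is that if $\cV \subseteq \cU_i$ is a rational open of the form $\Spa(A_i[1/g],B)$ for some integrally closed $B$, then the support morphism of $\cV$ identifies its support scheme with the open subscheme $D(g) \subseteq X_i$ and agrees with $\supp_{\cU_i}|_\cV$. Refining $\cU_i \cap \cU_j$ by rational opens in both patches then furnishes open immersions into $X_i$ and $X_j$, hence descent data for a global scheme $X$. The fiber description, openness, quotient property and sheaf isomorphism are all local and descend along the gluing.

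The main obstacle is the verification that this gluing is well-defined, i.e.\ that the support scheme is independent of the chosen affinoid presentation and that every open immersion of affinoid discretely ringed opens $\cU \hookrightarrow \cU'$ induces an open immersion on associated $\Spec$s. This reduces to the computation that for a rational $\cV = R(f_1,\ldots,f_n/g) \subseteq \Spa(A,A^+)$ the global sections of $\cO_\cV$ form the naive localization $A[1/g]$, an assertion that fails outside the discretely ringed setting (where completions intervene), and it is here that the discrete topology of $A$ is used decisively.
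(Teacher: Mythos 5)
Your proposal follows the same approach as the paper: define $\supp$ on affinoids as the support map to $\Spec A$, show it is an open quotient by computing images of rational opens, identify $\supp_*\cO_\cX$ with $\cO_{\Spec A}$ via the discrete-topology fact that $\cO_\cX(R(f_1,\ldots,f_n/g)) = A[1/g]$, and glue over an affinoid cover. You supply more explicit detail than the paper (the trivial-valuation argument for surjectivity and the fiber description, the stalk check), but the structure of the argument and the decisive use of the discrete topology are the same.
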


\begin{proof}
 For affinoid adic spaces, i.e. $\cX = \Spa(A,A^+)$, the support morphism is the map
 \[
  \supp \colon \Spa(A,A^+) \longrightarrow \Spec A
 \]
 that maps a point $x \in \cX$ to the prime ideal
 \[
  \supp(x) = \{ a \in A \mid \lvert a(x) \rvert = 0\}
 \]
 of~$A$.
 By construction, this morphism is surjective and identifies points with a common vertical generalization.
 Moreover, the map is surjective and maps a rational subset $\{ x \in \cX \mid \lvert f_i(x) \rvert \le \lvert g(x) \rvert \ne 0\}$ to the open subset $\{\p \in \Spec A \mid g \notin \p\}$.
 Hence, it is an open quotient map.
 Note that indeed the pushforward of the structure sheaf of $\Spa(A,A^+)$ identifies with the structure sheaf on $\Spec A$.
 
 In the general case we need to convince ourselves that the affine construction glues.
 This is the case because glueing data for affinoid opens in~$\cX$ induce scheme theoretic glueing data on the spport.
\end{proof}

It is clear that the support map is functorial in the following sense.
A morphism $\cX \to \cS$ of discretely ringed adic spaces induces a morphism $X \to S$ of support schemes fitting into the commutative diagram
\[
 \begin{tikzcd}
  \cX	\ar[r]	\ar[d,"\supp"']	& \cS 	\ar[d,"\supp"]	\\
  X		\ar[r]					& S.
 \end{tikzcd}
\]
If~$\cX$ is of the form $\Spa(X,X^+)$ for a morphism of schemes $X \to X^+$, then $\supp(\cX) = X$ and the support map sends a triple $(x,k(x)^+,c_x) \in \cX$ to $x \in X$.
In particular, if~$\cX$ is a discretely ringed adic space over $\cS = \Spa(S,S^+)$, then its support~$X$ is a scheme over~$S$.
Moreover $\supp$ commutes with fiber products and cofiltered limits.

Recall that a morphism of adic spaces $\cX \to \cS$ is \emph{proper} if it is of $+$weakly finite type, separated, and universally specializing.

\begin{lemma} \label{support_properties}
 Let $S \to S^+$ be a morphism of schemes and~$\cX$ a discretely ringed adic space over $\cS = \Spa(S,S^+)$.
 Suppose that $\cX \to \cS$ has one of the following properties:
 \begin{enumerate}[(a)]
  \item	quasicompact,
  \item	(locally) of weakly finite type,
  \item	separated,
  \item	proper.
 \end{enumerate}
 Then $X \to S$ has the same property (without the adverb ``weakly'' for property (b)).
\end{lemma}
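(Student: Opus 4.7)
The strategy is to dispatch the four properties in turn, exploiting throughout the structural properties of the support morphism $\supp : \cX \to X$ established in \cref{support_scheme}: it is surjective, continuous, open (hence a topological quotient), compatible with fibre products over~$\cS$, and satisfies $\cO_X \cong \supp_*\cO_\cX$. On a discretely ringed affinoid chart $\Spa(A,A^+)$, the support is simply $\Spec A$, so ring-theoretic structure transfers transparently from the adic to the schematic side.

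For (a), given a quasicompact open $U \subseteq S$, one lifts it to the quasicompact open $\supp_\cS^{-1}(U) \subseteq \cS$ by covering $U$ by finitely many affines and taking the corresponding affinoid opens of~$\cS$. Quasicompactness of $\cX \to \cS$ makes $\supp_\cX^{-1}(f^{-1}(U))$ quasicompact in~$\cX$, and the continuous surjection $\supp_\cX$ then pushes this forward to a quasicompact $f^{-1}(U) \subseteq X$. For (b), being locally of weakly finite type on a discretely ringed affinoid chart $\Spa(A,A^+) \to \Spa(B,B^+)$ unpacks, thanks to the discrete topology, to ``$A$ finitely generated over $B$'', which is precisely locally of finite type for the support map $\Spec A \to \Spec B$. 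Such charts cover $X \to S$, and combining with (a) covers the quasicompact variant.

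For (c), the key observation is that a closed immersion of discretely ringed adic spaces descends to a closed immersion of support schemes: locally it takes the form $\Spa(A/I,(A/I)^+) \hookrightarrow \Spa(A,A^+)$, whose support is the closed immersion $\Spec A/I \hookrightarrow \Spec A$. Applying this to the diagonal $\Delta_\cX : \cX \hookrightarrow \cX \times_\cS \cX$ and invoking compatibility of $\supp$ with fibre products yields that $\Delta_X : X \to X \times_S X$ is a closed immersion, establishing separatedness.

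For (d), parts (a)--(c) already provide that $X \to S$ is of finite type and separated, so it remains to verify the valuative criterion for properness. Given the standard square with a valuation ring $R$ of fraction field $K$, translate into adic data: the scheme maps give a morphism $\Spa(K,K) \to \cX$ on the trivial (generic) point together with an extension $\Spa(K,R) \to \cS$. Since the closed point of $\Spa(K,R)$ specializes the generic one, universally specializing of $\cX \to \cS$ lifts this specialization to~$\cX$, producing a morphism $\Spa(K,R) \to \cX$; separatedness of $\cX \to \cS$ ensures this lift is unique. Taking supports yields the desired $\Spec R \to X$. The main obstacle I anticipate is precisely this last step: one has to verify carefully that the lift produced by universally specializing carries the valuation datum of~$R$ (rather than a coarsening thereof), and that its support is the intended map $\Spec R \to X$. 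This relies on the explicit description of points of $\cX = \Spa(X,S^+)$ as triples $(\supp x, k(x)^+, c_x)$ together with the uniqueness coming from separatedness.
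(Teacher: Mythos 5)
Your arguments for (a) and (b) track the paper's closely: both read off the properties affinoid-chart by affinoid-chart via the support map, so there is nothing to flag there.

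For (c) you take a genuinely different route. The paper argues at the level of topological spaces: it lifts a specialization $x_1 \rightsquigarrow x_2$ in $X\times_S X$ (with $x_1$ on the diagonal) to a specialization of trivial-valuation points in $\cX\times_\cS \cX$, uses that the adic diagonal is closed, and then concludes that the schematic diagonal is closed under specialization, hence closed (being locally closed). You instead argue that a closed immersion of discretely ringed adic spaces has supports that form a closed immersion of schemes, which applied to $\Delta_\cX$ gives $\Delta_X$ a closed immersion outright. Both work; the paper's route is a bit more economical because it needs only topological closedness of $\Delta_\cX$ rather than its affine-local description as $\Spa(A/I,\cdot)\hookrightarrow\Spa(A,A^+)$. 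Your argument also tacitly relies on $\supp$ commuting with fibre products, which \cref{support_scheme} does give you; it would be worth stating that you are using it.

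For (d) there is a genuine gap, and it is exactly at the place you hedge. Consider a test diagram for the valuative criterion, with $x_0\in X$ the image of $\Spec K$ and $R\subset K$ the given valuation ring. A morphism $\Spa(K,R)\to\cX$ obtained from the adic valuative criterion sends \emph{every} point of $\Spa(K,R)$ to a point of $\cX$ whose support is $x_0$ (because $\supp(\Spa(K,R))$ is the single point $\Spec K$, and $\supp$ is compatible with morphisms). Hence ``taking supports'' of the lift produces only the constant map $\Spec K\to X$, $ \star\mapsto x_0$; it does \emph{not} produce a map $\Spec R\to X$. What the lift does give you is a point $\xi'\in\cX$ with support $x_0$, valuation ring $R\cap k(x_0)$, and a center morphism $\Spec(R\cap k(x_0))\to S^+$; but none of this hands you the local homomorphism $\cO_{X,x'}\to R$ (for $x'$ the image of the closed point of $\Spec R$) that the schematic valuative criterion actually requires. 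In particular $\cO_{X,x'}\to K$ factors through the full residue field $k(x')$, which is typically not contained in $R$. The paper sidesteps all this: it shows directly that $X\to S$ is universally closed by observing that, for any base change $S'\to S$, the square $\cX'\to\cS'$, $X'\to S'$ has surjective open (hence quotient) vertical maps and a closed top arrow (because $\cX\to\cS$ is universally specializing and spectral), so the bottom arrow is closed as well. If you want to keep the valuative-criterion route, you must track not just the topological specialization in $\cX$ but the full local-ring data, which in effect pushes you back toward the paper's quotient argument.
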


\begin{proof}
 (a). If the preimage of an affinoid $\Spa(R,R^+) \subseteq \cS$ has a cover by finitely may affinoids $\Spa(A_i,A_i^+) \subseteq \cX$, then the preimage of $\Spec R \subseteq S$ is covered by the affines $\Spec A_i$.
 Moreover~$S$ is covered by affines $\Spec R$ arising in this way.
 
 (b). Let $x \in X$ be a point.
 We pick a preimage $\underline{x} \in \cX$ under the support map.
 By assumption $\underline{x}$ has an open affinoid neighborhood $\Spa(A,A^+)$ mapping into an affinoid open $\Spa(R,R^+) \subseteq \cS$ such that~$A$ is of finite type over~$R$.
 But then $\Spec A$ is an affine neighborhood of~$x$ mapping into $\Spec R \subseteq S$.
 
 (c). We consider the diagram
 \[
  \begin{tikzcd}
   \cX	\ar[r]	\ar[d,"\supp"']		\ar[r]	& \cX \times_\cS \cX	\ar[d,"\supp"']	\\
   X	\ar[r]								& X \times_S X.
  \end{tikzcd}
 \]
 Let $x_1 \in X \times_S X$ be in the image of the diagonal and $x_2 \in X \times_S X$ a specialization of~$x_1$.
 Then the trivial valuations~$x_1^\triv$ and~$x_2^\triv$ on~$k(x_1)$ and~$k(x_2)$, respectively, define points of $\cX \times_\cS \cX$ such that~$x_2^\triv$ is a specialization of~$x_1^\triv$ and~$x_1^\triv$ is contained in the diagonal.
 Since $\cX \to \cS$ is separated, it follows that~$x_2^\triv$ is contained in the diagonal as well.
 Then also~$x_2$ is contained in the diagonal and consequently $X \to S$ is separated.
 
 (d). By~(b) and~(c) the morphism $X \to S$ is of finite type and separated.
 It remains to show that it is universally closed.
 We consider the base change
 \[
  X' \colon = X \times_S S' \longrightarrow S'
 \]
 of $X \to S$ along a morphism of schemes $S' \to S$.
 It induces morphisms of adic spaces
 \[
  \cS' \colon = \Spa(S',S^+) \longrightarrow \Spa(S,S^+)
 \]
 and
 \[
  \cX' \colon = \cX \times_\cS \cS' \longrightarrow \cS'
 \]
 fitting into the commutative diagram
 \[
  \begin{tikzcd}
   \cX'	\ar[r]	\ar[d,"\supp"']	& \cS'	\ar[d,"\supp"]	\\
   X'	\ar[r]					& S'.
  \end{tikzcd}
 \]
 The upper horizontal map is closed.
 Since the support map is a quotient map, the same holds true for the lower horizontal map.
\end{proof}

\begin{lemma} \label{vertical_compactification}
 Let $S \to S^+$ be a morphism of schemes and~$\cX$ a discretely ringed adic space separated over $\cS = \Spa(S,S^+)$ with support~$X$.
 Then the map
 \begin{align*}
  \iota \colon \cX	& \longrightarrow \Spa(X,S^+),	\\
  x					& \longmapsto (\supp(x),k(x)^+,c_x)),
 \end{align*}
 where $s \in \cS$ is the image of~$x$ and~$c_x$ denotes the composition
 \[
  c_x \colon \Spec k(x)^+ \longrightarrow \Spec k(s)^+ \overset{c_s}{\longrightarrow} S^+,
 \]
 defines a pro-open immersion of adic spaces over~$\cS$ with dense image.
\end{lemma}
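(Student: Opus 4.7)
My approach is to verify well-definedness and the morphism-over-$\cS$ claim directly, then reduce pro-openness and density to the affinoid case before globalizing.

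For well-definedness, let $x \in \cX$ map to $s \in \cS$. Then $\supp x \in X$ has residue field $k(x)$, and the composition $c_x \colon \Spec k(x)^+ \to \Spec k(s)^+ \overset{c_s}{\to} S^+$ fits into the required commutative square because the analogous square for~$s$ commutes and the closed point of $\Spec k(x)^+$ maps to that of $\Spec k(s)^+$ compatibly with $\supp x \mapsto \supp s$ in $X \to S$. Composing $\iota$ with the tautological map $\Spa(X,S^+) \to \cS$, which sends $(y,R,\phi)$ to the image~$s$ of~$y$ in~$S$ together with $R|_{k(s)}$ and~$\phi$, recovers the structural morphism $\cX \to \cS$, so $\iota$ is indeed a morphism over~$\cS$.

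For the pro-open immersion claim I would reduce to the affinoid case. Pick a discretely ringed affinoid open $\cU = \Spa(A,A^+) \subseteq \cX$ mapping to an affine open $\Spec B^+ \subseteq S^+$, so that $\supp \cU = \Spec A$ and the composite $B^+ \to A$ is given. Then $\Spa(\supp \cU, S^+) = \Spa(A,B)$, where $B \subseteq A$ is the integral closure of the image of~$B^+$: a valuation of~$A$ admits a compatible center in~$\Spec B^+$ iff its valuation ring is $\le 1$ on the image of~$B^+$, iff it is $\le 1$ on~$B$. Since~$A^+$ is an integrally closed subring of~$A$ containing the image of~$B^+$, it contains~$B$, and $\iota|_\cU$ is the natural inclusion $\Spa(A,A^+) \hookrightarrow \Spa(A,B)$. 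For every finite $F \subseteq A^+$ the set
\[
U_F = \{x \in \Spa(A,B) \mid |a(x)| \le 1 \text{ for all } a \in F\}
\]
is a finite intersection of rational opens $\{|a| \le |1|\}$, hence a rational open subspace. Ordered by reverse inclusion the $U_F$ form a cofiltered system with intersection $\Spa(A,A^+)$ by the very defining condition of~$A^+$, exhibiting $\iota|_\cU$ as a pro-open immersion.

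Density in the affinoid case is immediate: every $y = (x,R,\phi) \in \Spa(A,B)$ admits as vertical generalization its trivialization $y^\triv = (x, k(x), \phi|_{\Spec k(x)})$ given by the trivial valuation with support~$x$. Since the trivial valuation satisfies $|a| \le 1$ for every $a \in A$, we have $y^\triv \in \Spa(A,A^+) = \iota(\cU)$; because open subsets of adic spaces are stable under generalization, every open neighborhood of~$y$ meets $\iota(\cU)$, so $\iota(\cU)$ is dense in $\Spa(\supp\cU, S^+)$.

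Finally I would globalize. The formation of $\Spa(-,S^+)$ is compatible with open immersions in the first variable and with affine opens of~$S^+$, so the local pro-open presentations from affinoid charts of~$\cX$ patch; separatedness of $\cX$ over~$\cS$ ensures, via the diagonal being a closed immersion, that the presentations agree on overlaps. Density is local and hence survives the gluing. The step I expect to require the most care is precisely this globalization: one must verify that the cofiltered systems of rational subsets chosen on different affinoid charts combine coherently into a single cofiltered system of opens in $\Spa(X,S^+)$ whose intersection is $\iota(\cX)$. This ultimately reduces to showing that a rational subset in the overlap of two charts arises by restriction from rational subsets in each chart, which is guaranteed by separatedness.
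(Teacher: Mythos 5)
There is a genuine gap: you never establish that $\iota$ is \emph{globally} injective, and this is precisely what the globalization you flag as the delicate step requires. Your affinoid analysis shows that $\iota|_\cU$ is a pro-open immersion into an open affinoid piece of $\Spa(X,S^+)$, hence locally injective, but local injectivity alone cannot rule out the possibility that two points in different charts have the same image. Your invocation of separatedness (``the presentations agree on overlaps'') is aimed at gluing the cofiltered systems, which is a different issue; it does not address injectivity. The paper's proof begins with a direct injectivity argument that you are missing: if $\iota(x_1)=\iota(x_2)$, then both map to the same $s\in\cS$ and share the same valued residue field $(k,k^+)$, giving a map $\Spa(k,k^+)\to\cX\times_\cS\cX$; since $\supp(x_1)=\supp(x_2)$, the generic (trivial-valuation) point of $\Spa(k,k^+)$ lands on the diagonal, and separatedness of $\cX/\cS$ (closed diagonal) then forces $x_1=x_2$. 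Once injectivity is in hand, the local affinoid description $(A,R^+)\to(A,A^+)$ gives the pro-open structure of $\iota$ globally with essentially no further gluing work, which is why the paper's route is shorter than the gluing you sketch.

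There is also a smaller imprecision: the identification $\Spa(\supp\cU,S^+)=\Spa(A,B)$ with $B$ the integral closure of (the image of) $B^+$ is not correct when $\Spec B^+$ is a proper open of $S^+$, since a point of $\Spa(\Spec A,S^+)$ may have center outside $\Spec B^+$. The correct local open piece of $\Spa(X,S^+)$ is $\Spa(A,R^+)=\Spa(\Spec A,\Spec R^+)$ sitting inside $\Spa(X,S^+)$ as the preimage of $\Spec A\subseteq X$ under the support map intersected with the preimage of $\Spec R^+\subseteq S^+$ under the center map. This also simplifies your density argument: density in $\Spa(X,S^+)$ follows immediately from the single global observation that any point $(x,R,\phi)$ has the trivial valuation $(x,k(x),\phi|_{\Spec k(x)})$ as a vertical generalization lying in $\iota(\cX)$, and opens are stable under generalization; no chart-by-chart reduction is needed.
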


\begin{proof}
 Note that~$\iota$ is indeed defined over~$\cS$.
 By construction the image of the point $(\supp(x),k(x)^+,c_x)$ in~$\cS$ is $(\supp(s),k(s)^+,c_s) = s$.
 
 Let us convince ourselves that~$\iota$ is injective.
 Suppose that~$x_1$ and~$x_2$ are two points of~$\cX$ with $\iota(x_1) = \iota(x_2)$.
 Both points map to the same point $s \in \cS$.
 Moreover,
 \[
  (k(x_1),k(x_1)^+) = (k(x_2),k(x_2)^+) =: (k,k^+).
 \]
 We get a commutative square
 \[
  \begin{tikzcd}
   \Spa(k,k^+)	\ar[r,"x_1"]	\ar[d,"x_2"']	& \cX	\ar[d]	\\
   \cX			\ar[r]							& \cS
  \end{tikzcd}
 \]
 and thus a morphism
 \[
  \Spa(k,k^+) \longrightarrow \cX \times_\cS \cX.
 \]
 Both~$x_1$ and~$x_2$ generalize to the trivial valuation on~$k$.
 Since $\supp(x_1) = \supp(x_2)$, the two resulting maps $\Spa(k,k) \to \cX$ are the same and thus the generic point of $\Spa(k,k^+)$ maps to the diagonal in $\cX \times_\cS \cX$.
 Since~$\cX$ is separated over~$\cS$, the diagonal is closed and hence $x_1 = x_2$.

 Locally on an affinoid open $\Spa(A,A^+)$ of~$\cX$ mapping into an affinoid open $\Spa(R,R^+)$ of $\cS$ the map~$\iota$ is defined by the homomorphism of Huber pairs
 \[
  (A,R^+) \longrightarrow (A,A^+)
 \]
 induced by the identity on~$A$, so~$\iota$ defines a morphism of adic spaces.
 Here we also see that~$\iota$ is a pro-open immersion.
 Indeed, the adic spectrum of the above map of Huber pairs is the inverse limit of the open immersions
 \[
  \Spa(A,B^+) \longrightarrow \Spa(A,R^+),
 \]
 where~$B^+$ ranges over the finitely generated $R^+$-subalgebras of~$A^+$.
 It is also clear that~$\iota$ has dense image.
 \end{proof}


\begin{lemma} \label{Spa_properties}
 Let $S \to S^+$ and $X \to S$ be morphisms of schemes and set
 \[
  \cS = \Spa(S,S^+), \qquad \cX = \Spa(X,S^+).
 \]
 We consider the properties
 \begin{enumerate}[(a)]
  \item	quasicompact,
  \item (locally) of ${}^+$weakly finite type,
  \item	separated,
  \item	proper.
 \end{enumerate}
 Then $X \to S$ has one of these properties (without ``${}^+$-weakly'' for property~(b)) if and only if $\cX \to \cS$ has the respective property.
\end{lemma}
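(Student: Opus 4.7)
The forward direction is exactly \cref{support_properties}, so I focus on the reverse implications, relying on the functoriality of the support construction and its compatibility with fibre products and affinoid covers. The key technical input is that for an affine open $\Spec R \subseteq S$ sitting below a compatible affinoid chart $\Spa(R, R^+) \subseteq \cS$, every affine open $\Spec A \subseteq X$ mapping into $\Spec R$ gives an affinoid chart $\Spa(A, R^+) \subseteq \cX$ whose support is $\Spec A$.

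For (a) and (b), I would cover $\cS$ by affinoids $\Spa(R_j, R_j^+)$ arising from affine opens $\Spec R_j \subseteq S$. Quasicompactness (resp.\ being locally of finite type) of $X \to S$ then yields a finite (resp.\ arbitrary) affine cover of the preimage of $\Spec R_j$ by $\Spec A_{j,i}$ with $A_{j,i}$ finitely generated over $R_j$; the corresponding $\Spa(A_{j,i}, R_j^+)$ form an affinoid cover of the preimage of $\Spa(R_j, R_j^+)$ in $\cX$, of the same cardinality, and each chart is ${}^+$weakly of finite type over $\Spa(R_j, R_j^+)$ by construction.

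For (c), the construction $\Spa(-, S^+)$ is compatible with fibre products over $\cS$, so $\cX \times_\cS \cX \cong \Spa(X \times_S X, S^+)$, and under this identification the diagonal of $\cX$ corresponds to the morphism induced by the scheme diagonal $X \hookrightarrow X \times_S X$. Separatedness of $X \to S$ makes the latter a closed immersion, and on an affinoid chart a closed immersion $\Spec A/I \hookrightarrow \Spec A$ induces $\Spa(A/I, R^+) \hookrightarrow \Spa(A, R^+)$ realised as the vanishing locus of $I$; so the adic diagonal is a closed immersion, i.e.\ $\cX \to \cS$ is separated.

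For (d), only the ``universally specializing'' condition remains after invoking (b) and (c). Given a base change $\cS' \to \cS$, set $\cX' = \cX \times_\cS \cS'$ and suppose $x \in \cX'$ lies above $s \in \cS'$ with a specialization $s \rightsquigarrow t$ in $\cS'$. I would decompose $s \rightsquigarrow t$ into its horizontal and vertical components, encode each as a valuation-ring datum on the supports, and apply the valuative criterion of properness to the morphism of schemes $X \times_S \supp(\cS') \to \supp(\cS')$ (which is proper by scheme-theoretic base change) to lift the datum into the support of $\cX'$. Reassembling the lift together with $k(x)^+$ produces the desired $y \in \cX'$ above $t$ with $x \rightsquigarrow y$. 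The main obstacle is precisely this last step, since translating an adic specialization faithfully through the support map and back requires careful bookkeeping of the convex subgroups and supports of the valuations involved, especially in the horizontal case; when $\cS'$ is not of the form $\Spa(-,-)$, I would first reduce to an affinoid base change by covering $\cS'$ and using the behaviour of $\Spa(X,-)$ under such reductions.
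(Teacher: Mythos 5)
Your treatment of (a), (b), and (c) matches the paper's in substance, and for (c) your chart-by-chart description of the closed immersion on supports is a perfectly legitimate variant of the paper's topological argument.

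For (d), though, there is a genuine gap that the valuative criterion does not fill. You correctly reduce to $\cS' = \Spa(S', S'^+)$ and to lifting a specialization $s' \rightsquigarrow s'_1$ in $\cS'$ to a specialization of a given $x' \in \cX'$. But when $s' \rightsquigarrow s'_1$ is \emph{vertical}, the supports of $s'$ and $s'_1$ agree, so the valuative criterion applied to $X \times_S S' \to S'$ gives no information whatsoever: there is nothing to lift at the level of supports, and the entire content is to produce a new valuation ring $R'_1 \subseteq k(x')^+$ lying over $k(s'_1)^+$. This is an extension problem for valuation rings, not a properness problem, and it is solved in the paper by Chevalley's extension theorem (via \cite{EP2005}, Theorem~3.1.2) applied to the residue fields $k(s')^\succ \hookrightarrow k(x')^\succ$; the sought ring $R'_1$ is then the preimage of that extension inside $k(x')^+$. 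Your phrase ``reassembling the lift together with $k(x)^+$'' glosses over exactly this step, and nothing in your proposed toolkit produces $R'_1$. Even in the horizontal case, where the valuative criterion does supply the lift of supports, one first needs to extend the valuation ring $R \supseteq k(s')^+$ of $k(s')$ to a valuation ring $R'_1 \supseteq k(x')^+$ of $k(x')$ (\cite{EP2005}, Lemma~3.1.5) before the criterion can be applied; so in both subcases an extension theorem for valuations is the real engine, with the valuative criterion only contributing the scheme-theoretic part of the horizontal case. You flag (d) as the main obstacle, which is honest, but the specific missing input is Chevalley.
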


\begin{proof}
 Note that~$X$ is the support scheme of~$\cX$.
 Therefore, the ``if'' part of the assertion follows from \cref{support_properties}.
 
 (a). Suppose that $X \to S$ is quasicompact.
 We take an affinoid open $\Spa(R,R^+) \subseteq \cS$ coming from a commutative square
 \[
  \begin{tikzcd}
   \Spec R		\ar[r]	\ar[d]	& S	\ar[d]	\\
   \Spec R^+	\ar[r]			& S^+.
  \end{tikzcd}
 \]
 The preimage of $\Spa(R,R^+)$ in $\cX$ equals $\Spa(X \times_S \Spec R,\Spec R^+)$.
 By assumption $X \times_S \Spec R$ is quasicompact, hence also $\Spa(X \times_S \Spec R,R^+)$.
 
 (b). Let $x \in \cX$ be a point.
 In order to show that $\cX \to \cS$ is locally of ${}^+$-weakly finite type, we may assume that~$S^+$ is affine, $S^+ = \Spec R^+$.
 If $X \to S$ is locally of finite type, there is an affine open neighborhood $\Spec A$ of $\supp(x)$ mapping into an affine open $\Spec R \subseteq S$ such that~$A$ is finitely generated over~$R$.
 Then $\Spa(A,R^+)$ is an affinoid neighborhood of~$x$ mapping into $\Spa(R,R^+)$ showing that $\cX \to \cS$ is locally of ${}^+$weakly finite type.
 
 Combining~(a) with what we have just proved, we obtain that $\cX \to \cS$ is of ${}^+$weakly finite type if $X \to S$ is of finite type.
 
 (c). Note that
 \[
  \cX \times_\cS \cX = \Spa(X \times_S X,S^+).
 \]
 We have to show that~$\cX$ is closed in $\cX \times_\cS \cX$ if~$X$ is closed in $X \times_S X$.
 For a point $x \in (\cX \times_\cS \cX) \setminus \cX$ there is an open neighborhood~$U$ of $\supp(x)$ in $X \times_S X$ disjoint from the diagonal.
 Since a point in $\cX \times \cS \cX$ is contained in~$\cX$ if and only if its support is contained in~$X$, the open neighborhood $\Spa(U,S^+)$ of~$x$ is disjoint from the diagonal.
 
 (d). Assume that $X \to S$ is proper.
 By~(b) and~(c) we know that $\cX \to \cS$ is of finite type and separated.
 It remains to show it is universally specializing.
 Pick a morphism $\cS' \to \cS$.
 We may assume that~$\cS'$ is of the form $\Spa(S',S'^+)$ for a commutative square
 \[
  \begin{tikzcd}
   S'	\ar[r]	\ar[d]	& S	\ar[d]	\\
   S'^+	\ar[r]			& S^+
  \end{tikzcd}
 \]
 because the statement is local on~$\cS'$.
 Set $X' = X \times_S S'$ and
 \[
  \cX' = \cX \times_\cS \cS' = \Spa(X',S'^+).
 \]
 We pick a point $x' = (\supp(x'),k(x')^+,c_{x'})$ in~$\cX'$ and assume that its image $s' = (\supp(s'),k(s')^+,c_{s'})$ in~$\cS'$ has a specialization $s_1' \in \cS'$.
 We have to lift~$s'_1$ to a specialization of~$x'$.
 We can treat vertical and horizontal specializations separately.
 Let us first assume that~$s'_1$ is a vertical specialization of~$s'$.
 Then $\supp(s') = \supp(s'_1)$ and
 \[
  k(s'_1)^+ \subseteq k(s')^+ \subseteq k(s').
 \]
 In fact, $k(s'_1)^+$ is the preimage in $k(s')^+$ of the valuation ring $k(s'_1)^+/\m_{k(s')^+}$ of $k(s')^\succ = k(s')^+/\m_{k(s')}^+$.
 Using Chevalley's extension theorem we find a valuation ring~$R'_1$ of $k(x')$ extending $k(s'_1)^+$ and such that
 \[
  R'_1 \subseteq k(x')^+ \subseteq k(x').
 \]
 More precisely, we extend $k(s'_1)^+/\m_{k(s')^+}$ from $k(s')^\succ$ to $k(x')^\succ$ using \cite{EP2005}, Theorem~3.1.2.
 Then~$R'_1$ is the preimage of this valuation ring in $k(x')^+$.
 Defining the homomorphism~$c'_1$ as the composition
 \[
  c'_1 : \Spec R'_1 \longrightarrow \Spec k(s'_1) \longrightarrow S^+
 \]
 we obtain a point
 \[
  x'_1 = (\supp x',R'_1,c'_1) \in \cX'
 \]
 lying over~$s'_1$ and specializing~$x'$.
 
 Now assume that $s' \rightsquigarrow s'_1$ is a horizontal specialization.
 This means that there is a valuation ring~$R$ with
 \[
  k(s')^+ \subseteq R \subseteq k(s')
 \]
 such that the composition
 \[
  \Spec R \longrightarrow \Spec k(s')^+ \overset{c_{s'}}{\longrightarrow} S'^+
 \]
 factors through~$S'$ sending the maximal ideal~$\m_R$ to $\supp(s'_1)$ and such that
 \[
  k(s'_1)^+ = \left(k(s')^+/\m_R\right) \cap k(s'_1).
 \]
 By \cite{EP2005}, Lemma~3.1.5, we find a valuation ring~$R'_1$ of~$k(x')$ extending~$R$ such that
 \[
  k(x')^+ \subseteq R'_1 \subseteq k(x').
 \]
 We consider the solid arrow commutative diagram
 \[
  \begin{tikzcd}
   \Spec k(x')	\ar[rr]	\ar[d]			&					& X	\ar[d]	\\
   \Spec R'_1	\ar[r]	\ar[urr,dashed]	& \Spec R \ar[r]	& S.
  \end{tikzcd}
 \]
 Since $X \to S$ is proper, the dashed arrow exists.
 It defines a horizontal specialization~$x'_1$ of~$x'$ lying over~$s'_1$.
 More precisely, $\supp(x'_1)$ is the image of the closed point of $\Spec R'_1$ in~$X$.
 The valuation ring~$k(x'_1)^+$ is obtained as follows.
 Writing~$\m_{R'_1}$ fo the maximal ideal of~$R'_1$, the quotien $k(x')^+/\m_{R'_1}$ is a valuation ring of $R'_1/\m_{R'_1}$ and we can restrict it to $k(\supp(x'_1)) = k(x'_1)$ (via the dashed arrow):
 \[
  k(x'_1)^+ = \left(k(x')^+/\m_{R'_1}\right) \cap k(x'_1).
 \]
 Finally the map~$c_{x'_1}$ is the composition
 \[
  \Spec k(x'_1)^+ \longrightarrow \Spec k(s'_1)^+ \overset{c_{s'_1}}{\longrightarrow} S^+.
 \]
\end{proof}

\begin{proposition} \label{characterisation_proper_over_S}
 Let $S \to S^+$ be a morphism of schemes and set $\cS = \Spa(S,S^+)$.
 Then the functors
 \[
  \begin{tikzcd}[row sep=tiny]
   \{\text{proper schemes}/S\}	\ar[r,leftrightarrow]	& \{\text{proper adic spaces}/\Spa(S,S^+)\}	\\
   F \colon (X \to S)			\ar[r,mapsto]		& (\Spa(X,S^+) \to \Spa(S,S^+))				\\
   \supp(\cX)					\ar[r,mapsfrom]		& (\cX \to \cS) \colon G
  \end{tikzcd}
 \]
 are mutually inverse equivalences of categories.
\end{proposition}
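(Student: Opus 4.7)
The plan is to verify that $F$ and $G$ are well-defined, observe that $G \circ F$ is the identity essentially by construction, and show that the natural pro-open immersion $\iota_\cX \colon \cX \to \Spa(X, S^+)$ from \cref{vertical_compactification} is an isomorphism, where $X = \supp(\cX)$. Well-definedness of $F$ on objects is \cref{Spa_properties}(d), and of $G$ is \cref{support_properties}(d); the support map is already known to be functorial by \cref{support_scheme}, and $\supp(\Spa(X, S^+)) = X$ by construction, so $G \circ F$ equals the identity on both objects and morphisms.

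For the other composition, let $\cX \to \cS$ be proper with support scheme $X \to S$. Since $\cX$ is separated over $\cS$, \cref{vertical_compactification} produces a pro-open immersion $\iota_\cX$ over $\cS$ that is injective and has dense image. To show it is surjective, I would note that $\Spa(X, S^+)$ is separated over $\cS$ by \cref{Spa_properties}(c), using that $X$ is separated over $S$ (a consequence of properness via \cref{support_properties}(c)). Hence $\iota_\cX$ factors as the graph morphism $\cX \to \cX \times_\cS \Spa(X, S^+)$, which is a closed immersion by separatedness of the target, followed by the second projection, which is proper as a base change of $\cX \to \cS$. The composition $\iota_\cX$ is therefore proper, so its image is closed in $\Spa(X, S^+)$; combined with density this yields surjectivity, and combined with injectivity a continuous bijection.

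The step I expect to be the main technical obstacle is upgrading this bijection to an isomorphism of adic spaces. I would work locally and reduce to the affinoid situation where $\iota_\cX$ restricts to $\Spa(A, A^+) \hookrightarrow \Spa(A, R^+)$, with $(A, A^+)$ a discretely ringed Huber pair over $(R, R^+)$. As a pro-open immersion, this presents $\Spa(A, A^+)$ as the cofiltered intersection of the quasi-compact opens $\Spa(A, B^+) \subseteq \Spa(A, R^+)$, where $B^+$ runs over the finitely generated $R^+$-subalgebras of $A^+$. Topological surjectivity of $\iota_\cX$ says this intersection is all of $\Spa(A, R^+)$; the complements then form a cofiltered family of constructibly closed subsets of the spectral space $\Spa(A, R^+)$ with empty intersection, and quasi-compactness in the constructible topology forces some $\Spa(A, B^+)$ to equal $\Spa(A, R^+)$ already. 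This in turn forces $A^+$ to coincide with the integral closure of $R^+$ in $A$, which is exactly the local description of $\Spa(X, S^+)$, so $\iota_\cX$ is an isomorphism locally and hence globally.

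Finally, full faithfulness on morphisms is straightforward: a morphism $f \colon \Spa(X_1, S^+) \to \Spa(X_2, S^+)$ over $\cS$ is determined by $\supp(f) \colon X_1 \to X_2$, since the valuation ring at the image of any point must be the restriction of the valuation ring at the source and the map to $S^+$ is forced by compatibility with $\cS$. Hence $G$ is fully faithful, and $F$ provides the two-sided inverse on Hom-sets, completing the equivalence.
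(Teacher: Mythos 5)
Your proof is correct and follows essentially the same route as the paper: well-definedness of $F$ and $G$ via \cref{Spa_properties} and \cref{support_properties}, $G \circ F = \id$ by construction, and for $F \circ G$ the pro-open dense immersion from \cref{vertical_compactification}. The graph/closed-image argument you give for surjectivity is precisely the content of \cref{immersion_proper}, which the paper invokes at this step; the additional care you take with the pro-open subtlety is a reasonable refinement (the paper's lemma is stated for open rather than pro-open immersions), though the constructible-compactness step is more than needed: since $\iota_\cX$ is topologically surjective and factors through each intermediate open $\Spa(A,B^+) \hookrightarrow \Spa(A,R^+)$, every such inclusion is already a surjective open immersion, hence an isomorphism, and the cofiltered limit equals $\Spa(A,R^+)$ without invoking compactness in the patch topology.
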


\begin{proof}
 By \cref{Spa_properties}, proper schemes are indeed mapped to proper adic spaces, so~$F$ is well defined.
 Moreover, the support of a proper adic space over~$\cS$ is proper over~$S$ by \cref{support_properties}~(d), so~$G$ is well defined.
 The composition $G \circ F$ maps a proper scheme~$X$ over~$S$ to the support of $\Spa(X,S)$, which is naturally identified with~$X$.
 Finally, the composite $F \circ G$ maps~$\cX$ over~$\cS$ to $\Spa(\supp(\cX),S^+)$.
 In \cref{vertical_compactification} we have constructed a natural pro-open immersion
 \[
  \cX \hookrightarrow \Spa(\supp(\cX),S^+)
 \]
 with dense image.
 Since $\supp(\cX)$ is separated by \cref{support_properties}, this map is an isomorphism by \cref{immersion_proper} below.
\end{proof}

We formulated the following lemma in more generality than needed here because we are going to use it in the next section, as well.

\begin{lemma} \label{immersion_proper}
 We consider a diagram
 \[
  \begin{tikzcd}
   \cX	\ar[rr,open,"\iota"]	\ar[dr,"f"']	&		& \cX'	\ar[dl,"f'"]	\\
   											& \cS
  \end{tikzcd}
 \]
 of pseudoadic spaces with an open immersion~$\iota$ with dense image.
 If~$f$ is proper and~$f'$ separated, then~$\iota$ is an isomorphism.
\end{lemma}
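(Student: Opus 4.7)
The plan is to factor $\iota$ in the standard way as a graph morphism followed by a base change of $f$, and then show that $\iota$ itself is universally closed, hence its image is clopen; density then forces the image to be all of $\cX'$.

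More concretely, I would first form the cartesian diagram
\[
 \begin{tikzcd}
  \cX	\ar[r,"\Gamma_\iota"]	\ar[d,"\iota"']		& \cX \times_\cS \cX'	\ar[d,"\iota \times \id"]	\ar[r,"\pi_2"]	& \cX'	\ar[d,"\Delta_{\cX'/\cS}"]	\\
  \cX'	\ar[r,"\Delta_{\cX'/\cS}"]					& \cX' \times_\cS \cX'													& \cX' \ar[l,"\id \times \iota"]
 \end{tikzcd}
\]
(I would draw the correct two squares, with $\Gamma_\iota$ the base change of $\Delta_{\cX'/\cS}$ along $\iota\times\id$, and $\pi_2$ the base change of $f$ along $f'$.) Since $f'$ is separated, $\Delta_{\cX'/\cS}$ is a closed immersion, hence so is $\Gamma_\iota$. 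Since $f$ is proper, it is universally closed, and hence its base change $\pi_2$ is universally closed too. Composing, $\iota = \pi_2 \circ \Gamma_\iota$ is universally closed. (Alternatively, this is an instance of the general fact that in a composition $g \circ h$ with $g$ separated and $g \circ h$ proper, $h$ is proper.)

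Now I would exploit the fact that $\iota$ is simultaneously an open immersion and universally closed. The image $\iota(\cX) \subseteq \cX'$ is open (since $\iota$ is an open immersion) and closed (since $\iota$ is closed), hence clopen. By hypothesis the image is also dense, so $\iota(\cX) = \overline{\iota(\cX)} = \cX'$. Since an open immersion onto its image is an isomorphism, we conclude that $\iota : \cX \to \cX'$ is an isomorphism.

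The only point that requires a little care is that the formal manipulations above are valid in the category of pseudoadic spaces. Specifically, one needs the compatibility of separatedness with base change via the diagonal, that universally closed morphisms are closed under base change, and that an open immersion with a closed image is a closed-and-open immersion onto that image; all of these are standard in Huber's framework for pseudoadic spaces (\cite{Hu96}, \S1.10 and \S1.3). I do not expect any real obstacle: the argument is essentially the scheme-theoretic proof that a proper morphism into a separated target factoring through an open dense subspace is an isomorphism, transported verbatim to the pseudoadic setting.
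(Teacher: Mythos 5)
Your proof is correct and follows essentially the same route as the paper: the graph of $\iota$ (the section $s$ of the first projection $\cX \times_\cS \cX' \to \cX$) is a closed subspace because $f'$ is separated, the second projection is closed because $f$ is proper, hence $\iota(\cX)$ is closed, and being open, closed, and dense it must be all of $\cX'$. The paper phrases this directly in terms of closed images rather than first asserting $\iota$ is universally closed, but the mechanism is identical.
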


\begin{proof}
 We consider the base change
 \[
  \cX \times_\cS \cX'.
 \]
The first projection $\cX \times_\cS \cX' \to \cX$ has a section~$s$, whose image $s(\cX)$ is a closed subspace of $\cX \times_\cS \cX'$ since~$f'$ is separated.
The image of $s(\cX)$ in~$\cX'$ is~$\cX$.
By assumption, $\cX \times_\cS \cX' \to \cX'$ is closed, so~$\cX$ is closed in~$\cX'$.
But~$\cX$ is also open with dense image in~$\cX'$.
It follows that the inclusion~$\iota$ is an isomorphism.
\end{proof}

Up until now we have only studied adic spaces but we will have to treat pseudoadic spaces as well.
Proper pseudoadic spaces can be represented by closed subsets of proper adic spaces as we will see in the next proposition.

\begin{proposition} \label{pseudoadic_proper}
 Let $S \to S^+$ be a morphism of qcqs schemes and set $\cS = \Spa(S,S^+)$.
 For a proper pseudoadic space~$\cZ$ over~$\cS$ there exists a proper scheme~$X$ over~$S$ and an open embedding
 \[
  j \colon \underline{\cZ} \hookrightarrow \Spa(X,S^+)
 \]
 such that $j|_{\lvert Z \rvert}$ is a closed embedding.
\end{proposition}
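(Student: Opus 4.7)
The plan is to reduce the pseudoadic case to the adic case treated in \cref{characterisation_proper_over_S} by producing a proper $S$-scheme $X$ via Nagata compactification of the support scheme of $\underline{\cZ}$ and then realizing $\underline{\cZ}$ as an open subspace of $\Spa(X,S^+)$ in which $|Z|$ is closed.

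First I would analyze the support. Properness of $\cZ$ forces the underlying map $\underline{\cZ}\to\cS$ to be of ${}^+$weakly finite type and separated, so by \cref{support_properties} the support scheme $Y:=\supp(\underline{\cZ})$ is separated and of finite type over $S$. Properness of the pseudoadic morphism also makes $|Z|\to\cS$ quasi-compact, so the image of $|Z|$ in $Y$ is contained in a quasi-compact open, and after replacing $Y$ by such a quasi-compact open we may assume $Y$ is qcqs over the qcqs base $S$. Nagata's compactification theorem then provides an open immersion $Y\hookrightarrow X$ with $X$ proper over $S$, and by passing to adic spectra this produces an open immersion $\Spa(Y,S^+)\hookrightarrow\Spa(X,S^+)$, characterised by the condition that a point of $\Spa(X,S^+)$ lies in $\Spa(Y,S^+)$ iff its support lies in $Y$.

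Next I would use \cref{vertical_compactification} to obtain a pro-open immersion $\iota\colon\underline{\cZ}\hookrightarrow\Spa(Y,S^+)$ with dense image, and set $j$ to be the composition with the open immersion $\Spa(Y,S^+)\hookrightarrow\Spa(X,S^+)$. It remains to show two things: that $j$ is a genuine open immersion (not just pro-open) and that $j(|Z|)$ is closed. For the second, the universally specializing property of $|Z|\to\cS$ together with the fact that $X\to S$ (and hence $\Spa(X,S^+)\to\cS$) is closed forces the image of $|Z|$ to be closed under specialization in $\Spa(X,S^+)$; combined with proconstructibility of $|Z|$ this gives the closedness of $j(|Z|)$. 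For the first, the key observation is that near $|Z|$ the valuation rings $A_i^+$ in the local affinoid descriptions $\Spa(A_i,A_i^+)\subseteq\underline{\cZ}$ cannot be further enlarged without creating horizontal specializations violating the proper lifting property, so the pro-open immersion stabilizes at a finite level, i.e.\ it is an honest open immersion on a neighborhood of $|Z|$.

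The main obstacle is the transition from pro-open to open in the last paragraph. Concretely, one has to prove that every point of $\Spa(Y,S^+)$ that specializes into $\iota(|Z|)$ already lies in $\iota(\underline{\cZ})$: for such a point one constructs, by a Chevalley-type valuation extension as in the proof of \cref{Spa_properties}(d), a horizontal specialization of a preimage in $|Z|$, and properness of $\cZ\to\cS$ forces this specialization to remain in $|Z|\subseteq\underline{\cZ}$, thereby pinning down the relevant $A_i^+$ exactly. Once this rigidity is established, combining it with \cref{immersion_proper} applied to the image of $j$ completes the proof.
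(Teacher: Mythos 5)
Your proposal follows the same overall skeleton as the paper: pass to the support scheme, compactify it (via Nagata/\cite{Con07}), and compose the (pro-)open immersion from \cref{vertical_compactification} with $\Spa(Z,S^+)\hookrightarrow\Spa(X,S^+)$. The closedness of $j(\lvert\cZ\rvert)$ is also handled in spirit as in the paper, but where you argue directly via ``universally specializing plus proconstructible,'' the paper instead forms the closure $\overline{\cZ}$ of $j(\lvert\cZ\rvert)$, regards $\lvert\cZ\rvert\hookrightarrow\overline{\cZ}$ as a morphism of pseudoadic spaces with proper source and separated target, and invokes \cref{immersion_proper}. Your version is essentially \cref{immersion_proper} unwound by hand, but as sketched it has a gap: given $y\in j(\lvert\cZ\rvert)$ and $y\rightsquigarrow y'$, the specializing property only produces \emph{some} specialization of $j^{-1}(y)$ in $\lvert\cZ\rvert$ over $f(y')$, and you still have to argue that $j$ sends it to $y'$ rather than to a different point of the fibre. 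The graph/closed-diagonal mechanism in \cref{immersion_proper} does exactly this bookkeeping, and I would recommend using it rather than trying to rebuild it inline.

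The more serious issue is your treatment of the pro-open-to-open step. Your proposed argument -- that every point of $\Spa(Y,S^+)$ specializing into $\iota(\lvert Z\rvert)$ already lies in $\iota(\underline{\cZ})$ -- is \emph{automatically} true for any pro-open immersion, since a pro-open subset (being an intersection of opens) is closed under generalization; it says nothing about openness of $\iota(\underline{\cZ})$. Likewise, a filtered limit of open immersions does not ``stabilize at a finite level'' unless the index poset has a maximum, which is precisely what would have to be shown. The reason the immersion is in fact open here, and the reason the paper can call it one, is the hypothesis that $\cZ$ is proper, hence $\underline{\cZ}\to\cS$ is of ${}^+$weakly finite type: locally $\underline{\cZ}=\Spa(A,A^+)$ with $A^+$ the integral closure of $R^+[e_1,\dots,e_n]$ in $A$ for a finite set $\{e_i\}$, so that $\Spa(A,A^+)$ is the rational subset $\{\,\lvert e_i(x)\rvert\le 1\text{ for all }i\,\}$ of $\Spa(A,R^+)$, which is genuinely open. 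This is the argument you should supply instead of the rigidity heuristic; once it is in place, \cref{immersion_proper} takes care of the closedness cleanly.
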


\begin{proof}
 Let~$Z$ be the support of~$\underline{\cZ}$.
 By \cref{vertical_compactification} we have an open immersion
 \[
  \underline{\cZ} \hookrightarrow \Spa(Z,S^+).
 \]
 Let~$X$ be a compactification of~$Z$ over~$S$ (which exists by \cite{Con07} as~$S$ is qcqs).
 Then we consider the composition of open immersions
 \[
  j \colon \underline{\cZ} \hookrightarrow \Spa(Z,S^+) \hookrightarrow \Spa(X,S^+).
 \]
 By \cref{Spa_properties} the adic space $\Spa(X,S^+)$ is proper over~$\cS$.
 Let~$\overline{\cZ}$ be the closure of the image of~$\lvert \cZ \rvert$ in $\Spa(X,S^+)$.
 We obtain an open immersion
 \[
  j_{\lvert \Z \rvert} \colon \lvert \cZ \rvert \hookrightarrow \overline{\cZ}
 \]
 of pseudoadic spaces.
 The source is proper and the target separated over~$\cS$.
 It follows by \cref{immersion_proper} that $\lvert \cZ \rvert \hookrightarrow \overline{\cZ}$ is an isomorphism, or in other words that $j_{\lvert \cZ \rvert} \colon \lvert \cZ \rvert \to \Spa(X,S^+)$ is a closed immersion.
\end{proof}

\section{The proper base change theorem}

 We consider a Cartesian diagram
 \[
  \begin{tikzcd}
   \cX'	\ar[r,"g'"]	\ar[d,"f'"]	& \cX	\ar[d,"f"]	\\
   \cS'	\ar[r,"g"]				& \cS,
  \end{tikzcd}
 \]
 of pseudoadic spaces.
 Then for any sheaf~$\cF$ on the tame site~$\cX_t$ we get a base change morphism
 \[
  g^*Rf_*\cF \overset{\sim}{\longrightarrow} Rf'_*(g'^*\cF).
 \]
 The proper base change theorem is concerned with the question whether it is an isomorphism in case~$f$ is proper.
 By formal reasons it suffices to study the base change homomorphism only in two special settings.
 The first one is the situation where~$\cS$ is tamely local with closed point~$\cS'$.
 In the second case~$\cS$ and~$\cS'$ are tamely closed fields.
 In the second case the base change homomorphism cannot always be an isomorphism (see introduction).
 In this article we only deal with the first case.
 
 The key result for the proper base change theorem is the following.
 
 \begin{proposition} \label{key_bc}
  Let $(R,R^+)$ be a local Huber pair.
  We set $\cS = \Spa(R,R^+)$ and denote by $s \in \cS$ the closed point.
  Let $f \colon X \to \Spec R$ be proper and~$\cF$ a torsion sheaf on the strongly étale site of $\cX = \Spa(X,R^+)$.
  We consider the cartesian diagram
  \[
   \begin{tikzcd}
    \cX_s	\ar[r]	\ar[d,"f_s"']	& \cX	\ar[d,"f"]	\\
    s		\ar[r]					& \cS.
   \end{tikzcd}
  \]
 Then
 \[
  H^i(\cX_\set,\cF) \overset{\sim}{\longrightarrow} H^i(\cX_{s,\set},\cF|_{\cX_s}).
 \]
 \end{proposition}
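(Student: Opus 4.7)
The approach is to present both cohomologies as colimits over $X$-modifications of $\Spec R^+$ using \cref{cohomology_models} and \cref{pseudoadic_colim_models}, and then reduce term by term to the classical proper base change theorem for étale cohomology of schemes. By \cref{characterisation_proper_over_S} there is nothing extra to do on the adic side: we are genuinely working with the proper scheme $X$ over $\Spec R$, its modifications $Y$ over $\Spec R^+$, and their closed fibres.

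First I would apply \cref{cohomology_models} to obtain
\[
 H^i(\cX_\set, \cF) \cong \colim_Y H^i(Y_\et, \varphi_{Y,*}\cF),
\]
with $Y$ ranging over $X$-modifications of $\Spec R^+$. Since $\cX_s$ is a closed pseudoadic subspace of $\cX$ (as the preimage of the closed point $s \in \cS$), \cref{pseudoadic_colim_models} analogously gives
\[
 H^i(\cX_{s,\set}, \cF|_{\cX_s}) \cong \colim_Y H^i(Z_{s,Y,\et}, \varphi_{Z_{s,Y},*}(\cF|_{\cX_s})),
\]
where $Z_{s,Y}$ is the image of $\cX_s$ in $Y$ under the center map. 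The next step is to identify $Z_{s,Y}$ with the scheme-theoretic closed fibre $Y_{s_0}$, where $s_0 \in \Spec R^+$ is the image of $s$ under $\cS \to \Spec R^+$. The inclusion $Z_{s,Y} \subseteq Y_{s_0}$ is immediate, and the converse uses the valuative criterion: because $Y \to \Spec R^+$ is proper and $X \to Y$ is dominant, every point of $Y_{s_0}$ is the centre of some valuation pulled back from a generic point of $X$ whose restriction to $R^+$ is centred at $s_0$, yielding a point of $\cX_s$. The compatibility $(\varphi_{Y,*}\cF)|_{Y_{s_0}} \cong \varphi_{Z_{s,Y},*}(\cF|_{\cX_s})$ of sheaves would follow from the commutative square of topoi in \cref{specialization_map_topoi}, applied to the closed embedding $Y_{s_0} \hookrightarrow Y$.

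With these identifications the problem reduces, at each level $Y$, to showing that the restriction map
\[
 H^i(Y_\et, \varphi_{Y,*}\cF) \longrightarrow H^i(Y_{s_0,\et}, (\varphi_{Y,*}\cF)|_{Y_{s_0}})
\]
is an isomorphism, compatibly with the transitions in the colimit. The classical proper base change theorem applied to $Y \to \Spec R^+$ gives precisely this on stalks at $s_0$. The main obstacle will be to bridge the gap between the stalk-level statement and a genuine global isomorphism, since $R^+$ is only assumed local rather than henselian. My plan to overcome this is to enlarge the diagram of modifications so as to absorb the filtered system of étale neighbourhoods of $s_0$ in $\Spec R^+$: strongly étale cohomology of $\cX$ is compatible with such limits, and this reduces matters to the case in which $R^+$ is replaced by its strict henselization, where $R\Gamma(\Spec R^+,-)$ agrees with the stalk at $s_0$ and the stalk-level proper base change upgrades directly to the required global isomorphism. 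Passing to the colimit over the enlarged diagram of modifications then concludes the proof.
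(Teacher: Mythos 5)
The overall architecture you propose is the right one and matches the paper's proof in its skeleton: express both cohomology groups as colimits over $X$-modifications $Y$ of $\Spec R^+$ using \cref{cohomology_models} and \cref{pseudoadic_colim_models}, identify the image of $\cX_s$ in $Y$ with the closed fibre $Y_c$, and feed the result to the scheme-theoretic proper base change theorem. However, the proposal has a genuine gap at the crucial step.

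You assert that the sheaf-level compatibility $(\varphi_{Y,*}\cF)|_{Y_{s_0}} \cong \varphi_{Z_{s,Y},*}(\cF|_{\cX_s})$ ``would follow from the commutative square of topoi in \cref{specialization_map_topoi}''. This is exactly where the argument fails. \cref{specialization_map_topoi} only provides the commutativity $\iota_{Y,*}\varphi_{Z_Y,*} = \varphi_{Y,*}\iota_*$, i.e. the compatibility of the \emph{pushforwards}; what you need is the base change isomorphism $\iota_Y^*\varphi_{Y,*} \cong \varphi_{Z_Y,*}\iota_s^*$, which is a different assertion and is not a formal consequence. The paper explicitly flags that this base change map is \emph{not} an isomorphism at any fixed level $Y$. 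The actual proof factors the base change map through the Riemann--Zariski topoi of \cref{comparison_RZ} as
\[
 \iota_c^* \Psi_{Y,*} \pi_{\cX,*} \cF \longrightarrow \Psi_{Y_c,*} \iota_{s,RZ}^* \pi_{\cX,*} \cF \longrightarrow \Psi_{Y_c,*} \pi_{\cX_s,*} \iota_s^*\cF,
\]
shows the second arrow is an isomorphism via \cref{bc_pi_RZ}, and then shows the first becomes an isomorphism only after passing to the colimit over all modifications $Y$ — this last step uses the topos equivalence $\colim_Y \Psi_Y^*\Psi_{Y,*} \cong \id$ coming from \cref{psi_equivalence} to rewrite the colimit of the level-$Y$ comparison maps as an identity. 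Without this two-step decomposition and the colimit-level rewriting, the term-by-term reduction to scheme proper base change you sketch does not go through.

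Your secondary concern — that $R^+$ is only assumed local, not henselian — is a reasonable one to raise, but it is not where the real work lies: in the application (\cref{bc_set}) one has already localized to a strongly henselian $\cS$, so in the setting where \cref{key_bc} is actually used, $\Spec R^+$ is strictly local and the identification of global étale cohomology of $Y$ with the stalk at the closed point goes through directly via the classical theorem. Your proposed detour through étale neighbourhoods of $s_0$ is therefore not needed, and in any case does not repair the level-$Y$ sheaf mismatch above.
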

 
\begin{proof}
 For every $X$-modification~$Y$ of~$R^+$ we consider the commutative diagram
 \begin{equation} \label{X_s_Y_s}
  \begin{tikzcd}
  \cX_s	\ar[r,"\iota_s"]	\ar[d,"\varphi_{Y_c}"']	& \cX	\ar[d,"\varphi_Y"]	\\
  Y_c		\ar[r,"\iota_c"]						& Y,
  \end{tikzcd}
 \end{equation}
 where $c \in \Spec R^+$ denotes the closed point.
 Note that~$Y_c$ equals the image of~$\cX_s$ in~$Y$.
 By \cref{pseudoadic_colim_models} we have functorial isomorphisms
 \[
  H^i(\cX_\set,\cF) \cong \colim_Y H^i(Y_\et,\varphi_{Y,*}\cF)
 \]
 and
 \[
  H^i(\cX_{s,\set},\cF_s) \cong \colim_Y H^i(Y_{c,\et},\varphi_{Y_c,*}\cF_s).
 \]
 But by proper base change for the étale cohomology of a scheme, we get compatible isomorphisms
 \[
  H^i(Y_\et,\varphi_{Y,*}\cF) \overset{\sim}{\longrightarrow} H^i(Y_{c,\et},(\varphi_{Y,*}\cF)_c).
 \]
 It remains to connect the sheaves $(\varphi_{Y,*}\cF)_c = \iota_c^*\varphi_{Y,*}\cF$ and $\varphi_{Y_c,*}\cF_s = \varphi_{Y_c,*} \iota_s^* \cF$.
 At the level of topoi the diagram (\ref{X_s_Y_s}) decomposes as
 \[
  \begin{tikzcd}
   \Shv(\cX_{s,\set})		\ar[r,"\iota_s"]		\ar[d,"\pi_{\cX_s}"]	\ar[dd,bend right=70,"\varphi_{Y_c}"']	& \Shv(\cX_\set)		\ar[d,"\pi_\cX"]	\ar[dd,bend left=70,"\varphi_Y"]	\\
   \Shv(\cX_{s,\set,RZ})	\ar[r,"\iota_{c,RZ}"]	\ar[d,"\Psi_{Y_c}"]												& \Shv(\cX_{\set,RZ})	\ar[d,"\Psi_Y"]	\\
   \Shv(Y_{c,\et})			\ar[r,"\iota_c"]																		& \Shv(Y_\et)
  \end{tikzcd}
 \]
 Accordingly, the base change homomorphism
 \[
  \iota_c^* \varphi_{Y,*} \cF \longrightarrow \varphi_{Y_c,*} \iota_s^* \cF
 \]
 factors as
 \begin{equation} \label{bc_split}
  \iota_c^* \varphi_{Y,*} \cF = \iota_c^* \Psi_{Y,*} \pi_{\cX,*} \cF \longrightarrow \Psi_{Y_c,*} \iota_{s,RZ}^* \pi_{\cX,*} \cF \longrightarrow \Psi_{Y_c,*} \pi_{\cX_s,*} \iota_s^*\cF = \varphi_{Y_c,*} \iota_s^* \cF.
 \end{equation}
 The second homomorphism is an isomorphism by \cref{bc_pi_RZ}.
 The first homomorphism does not have to be an isomorphism.
 This will only turn out to be true in the limit over all $X$-modifications~$Y$ of~$S$.
 
 The equivalence of topoi from \cref{psi_equivalence} entails that the natural homomorphism
 \[
  \cG \longrightarrow \colim_Y \Psi_Y^* \Psi_{Y,*} \cG
 \]
 is an isomorphism for all sheaves~$\cG$ on~$\cX_\set$.
 Similarly,
 \[
  \cG \longrightarrow \colim_Y \Psi_{Y_c}^* \Psi_{Y_c,*} \cG
 \]
 is an isomorphism for all sheaves~$\cG$ on~$\cX_{s,\set}$.
 Using this we identify
 \[
  \colim_Y \Psi_{Y_c}^* \iota_c^* \Psi_{Y,*} \pi_{X,*} \cF = \colim_Y \iota_{s,RZ}^* \Psi_Y^* \Psi_{Y,*} \pi_{X,*} \cF \longrightarrow \colim_Y \Psi_{Y_c}^* \Psi_{Y_c,*} \iota_{s,RZ}^* \pi_{X,*}\cF
 \]
 with the identity
 \[
  \iota_{s,RZ}^* \pi_{X,*} \cF \longrightarrow \iota_{s,RZ}^* \pi_{X,*} \cF.
 \]
 Consequently,
 \begin{align*}
  \colim_Y H^i(Y_{c,\et},\iota_c^*\Psi_{Y,*}\pi_{X,*}\cF)	&= H^i(\cX_{x,RZ,\set},\colim_Y \Psi_{Y_c}^*\iota_c^*\Psi_{Y,*}\pi_{X,*}\cF)	\\
															&= H^i(\cX_{s,RZ,\set},\colim_Y \Psi_{Y_c}^* \Psi_{Y_c,*}\iota_{x,RZ}^*\pi_{X,*}\cF)	\\
													&= \colim_Y H^i(Y_{c,\et},\Psi_{Y_c,*} \iota_{s,RZ}^* \pi_{X,*}\cF).
 \end{align*}
 Via the second base change map in (\ref{bc_split}), which we already showed to be an isomorphism, we identify this with
 \[
  \colim_Y H^i(Y_{c,\et},\Psi_{Y_c,*} \iota_s^* \cF)
 \]
 and we are done
 \end{proof}

We want to enhance \cref{key_bc} to a proper base change theorem along locally closed immersions.
We first need to settle the baby example where the proper morphism is a closed immersion.

\begin{lemma} \label{closed_immersion_acyclic}
 Let $\iota \colon \cZ \to \cX$ be a closed immersion of pseudoadic spaces and~$\cF$ an abelian sheaf on~$\cZ_\set$.
 Then
 \[
  R\iota_*\cF \cong \iota_*\cF.
 \]
\end{lemma}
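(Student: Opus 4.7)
The plan is to verify that the higher direct images $R^i \iota_* \cF$ vanish for $i \geq 1$; combined with the identification $R^0 \iota_* = \iota_*$, this proves the claim. I would check vanishing at the level of stalks: $(R^i \iota_* \cF)_x = 0$ for every $x \in \cX$ and every $i \geq 1$. Recall that $R^i \iota_* \cF$ is the sheafification of the presheaf sending a strongly \'etale $U \to \cX$ to $H^i(U \times_\cX \cZ, \cF|_{U \times_\cX \cZ})$, so the stalk at $x$ is a colimit of such groups over strongly \'etale neighborhoods $U$ of $x$ in $\cX$.

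First, for a point $x \in \cX$ with $x \notin |\cZ|$, closedness of $|\cZ|$ in $|\cX|$ yields an open (in particular strongly \'etale) neighborhood $U$ of $x$ whose underlying set is disjoint from $\iota^{-1}(|\cZ|)$. For such $U$ the pseudoadic space $U \times_\cX \cZ$ has empty point set, so all its cohomology groups vanish and the stalk is zero.

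Second, for $x \in |\cZ|$, I would show that the family of pseudoadic spaces $U \times_\cX \cZ$, indexed by strongly \'etale neighborhoods $U$ of $x$ in $\cX$, is cofinal in the system of strongly \'etale neighborhoods of $x$ in $\cZ$. Granting this cofinality, the stalk $(R^i \iota_* \cF)_x$ coincides with the strongly \'etale stalk cohomology of $\cF$ at $x$ inside $\cZ$, which is $\cF_x$ for $i = 0$ and vanishes for $i \geq 1$.

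The main obstacle is the cofinality required in the second case: concretely, one must show that any strongly \'etale morphism $V \to \cZ$ together with a chosen lift of~$x$ to $V$ can be extended, locally around that lift, to a strongly \'etale morphism $\tilde V \to \cX$ with $\tilde V \times_\cX \cZ \cong V$ near the chosen point. This is the analogue for pseudoadic spaces of the classical fact that \'etale morphisms lift along closed immersions of schemes; it should follow from Huber's existence and extension results for \'etale morphisms in \cite{Hu96} together with the observation that unramifiedness of the residue field extensions, which is the only extra condition distinguishing strongly \'etale from \'etale, is automatic for points lying over points of $\cZ$ once it holds on $\cZ$ itself.
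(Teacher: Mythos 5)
Your stalkwise reduction matches the paper's, and your two-case split is a natural way to organize the computation. The paper finishes slightly more economically: after reducing to the stalk at $x$, it replaces $\cX$ by the strongly local (strongly henselian) space obtained by localizing at $x$, and then observes that the preimage of $\cZ$ there is a closed subspace of a strongly local pseudoadic space, hence is itself strongly local, hence has vanishing higher strongly \'etale cohomology. Crucially, the paper never needs the cofinality you single out as your main obstacle --- it does not identify that preimage with the strong henselization of $\cZ$ at $x$, it only needs it to be strongly local. Your route via cofinality would also succeed and in fact proves a stronger intermediate claim, but it requires lifting strongly \'etale neighborhoods along the closed immersion, which you rightly flag as the technical crux. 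One genuine gap in your sketch: the remark that unramifiedness is ``automatic for points lying over points of $\cZ$'' leaves open what happens at points of $\lvert\tilde V\rvert$ lying over $\lvert\cX\rvert \setminus \lvert\cZ\rvert$; since strong \'etaleness of $\tilde V \to \cX$ is a condition over all of $\lvert\cX\rvert$, you would need either to choose the proconstructible subset $\lvert\tilde V\rvert$ more carefully or to argue that unramifiedness spreads out to a neighborhood. Both arguments ultimately rest on closely related facts about strongly local spaces and closed subspaces, but the paper's packaging sidesteps the explicit lifting question.
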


\begin{proof}
 We have to show that $R^i\iota_*\cF = 0$ for $i>0$.
 We can check this on stalks.
 We now have to show the following statement.
 Let~$\cX$ be a strongly local pseudoadic space and $\cZ \subseteq \cX$ a closed subspace.
 For any abelian sheaf~$\cF$ on~$\cZ_\set$
 \[
  H^i(\cZ_\set,\cF) = 0 \qquad \forall i > 0.
 \]
 But closed subspaces of strongly local spaces are strongly local, so these cohomology groups vanish.
\end{proof}

\begin{lemma} \label{bc_closed_immersion}
 Let $\iota \colon \cZ \hookrightarrow \cX$ be a closed immersion of pseudoadic spaces and~$\cF$ a sheaf on~$\cX_\set$
 Then for any cartesian square
 \[
  \begin{tikzcd}
   \cZ'	\ar[r,"g'"]	\ar[d,closed,"\iota'"']	& \cZ	\ar[d,closed,"\iota"]	\\
   \cX'	\ar[r,"g"']							& \cX
  \end{tikzcd}
 \]
 the base change homomorphism
 \[
  g^*\iota_*\cF \longrightarrow \iota'_* g'^* \cF
 \]
 is an isomorphism (note that $\iota_* = R\iota_*$ by \cref{closed_immersion_acyclic}).
\end{lemma}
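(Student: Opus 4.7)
The plan is to check the base change map stalkwise on $\cX'_\set$. Pick a geometric point $\bar{x}'$ of $\cX'_\set$, let $\bar{x} = g(\bar{x}') \in \cX$ be its image, and note that since $\iota$ is a closed immersion and the square is cartesian, $\bar{x}' \in \iota'(\cZ')$ if and only if $\bar{x} \in \iota(\cZ)$.

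First suppose $\bar{x} \notin \iota(\cZ)$. Because $\cX \setminus \iota(\cZ)$ is open, $\bar{x}$ admits a strongly étale neighborhood $U \to \cX$ whose image avoids $\iota(\cZ)$, so $U \times_{\cX} \cZ$ is empty and $\iota_*\cF(U) = \cF(\varnothing) = 0$. Taking the colimit over such $U$ gives $(g^*\iota_*\cF)_{\bar{x}'} = (\iota_*\cF)_{\bar{x}} = 0$, and the same argument on $\cX'$ gives $(\iota'_*g'^*\cF)_{\bar{x}'} = 0$, so the base change map is trivially an isomorphism at this stalk. The interesting case is $\bar{x} \in \iota(\cZ)$: write $\bar{z} \in \cZ$ for the unique preimage of $\bar{x}$ under $\iota$ and $\bar{z}' \in \cZ'$ for the unique preimage of $\bar{x}'$ under $\iota'$, so that $g'(\bar{z}') = \bar{z}$ by the cartesian property. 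I will argue that both stalks are canonically $\cF_{\bar{z}}$. For the left-hand side, $(\iota_*\cF)_{\bar{x}}$ is a colimit of $\cF(U \times_\cX \cZ)$ over strongly étale neighborhoods $U \to \cX$ of $\bar{x}$, and the key step is that pullback along $\iota$ sends this system to a cofinal system of strongly étale neighborhoods of $\bar{z}$ in $\cZ$. This is the same geometric input that underlies \cref{closed_immersion_acyclic}: closed subspaces of strongly local pseudoadic spaces are themselves strongly local, so every strongly étale neighborhood of $\bar{z}$ in $\cZ$ refines to the restriction of a strongly étale neighborhood of $\bar{x}$ in $\cX$. Granting this cofinality, $(\iota_*\cF)_{\bar{x}} = \cF_{\bar{z}}$, and by the identical computation on the primed side $(\iota'_*g'^*\cF)_{\bar{x}'} = (g'^*\cF)_{\bar{z}'} = \cF_{\bar{z}}$. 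A direct inspection of the unit--counit construction of the base change map shows that it realizes precisely this canonical identification.

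The one non-formal step is the cofinality assertion in the second case; once it is granted the two stalks are literally the same object and everything else is adjoint-functor bookkeeping. Since this cofinality is exactly the property already invoked in the proof of \cref{closed_immersion_acyclic}, no new geometric input is needed beyond what has already been used for the acyclicity of $\iota_*$ along closed immersions.
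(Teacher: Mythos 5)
Your proof is correct and follows essentially the same route as the paper's: reduce to stalks (equivalently, to the strongly local case) and observe that a closed subspace of a strongly local space is strongly local, so pushforward and pullback along the closed immersion commute with taking sections there. The paper phrases this slightly more compactly: after passing to the strongly henselian case it computes $(g'^*\cF)(\cZ')$ as a colimit over strongly \'etale $\cU \to \cZ$ admitting a factorization $\cZ' \to \cU \to \cZ$, and notes that every such $\cU \to \cZ$ splits because $\cZ$ is strongly henselian, so the colimit is $\cF(\cZ)$. You instead make the cofinality of $\{U \times_\cX \cZ\}$ among strongly \'etale neighborhoods of $\bar z$ explicit and conclude that both stalks are $\cF_{\bar z}$. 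One small inaccuracy: that cofinality statement is not quite ``exactly the property already invoked'' in \cref{closed_immersion_acyclic} --- that proof only needs the closed subspace of a strongly local space to be strongly local (hence acyclic), whereas your cofinality additionally requires a standard finite-presentation/limit argument to descend a section over $\cZ \times_\cX \cX^{\mathrm{sh}}_{\bar x}$ to a section over some $U \times_\cX \cZ$. This extra step is routine and correct, but it is worth acknowledging rather than folding into the citation. Your handling of the case $\bar x \notin \iota(\cZ)$ is fine, and the final remark that the base change map realizes the canonical identification is the kind of unit--counit verification that is indeed formal once the two stalks are identified.
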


\begin{proof}
 We can check this on stalks and therefore assume that~$\cX$ and~$\cX'$ are strongly henselian.
 But then also~$\cZ$ and~$\cZ'$ are strongly henselian.
 By construction, $(g'^*\cF)(\cZ')$ equals $\colim_{\cU \to \cZ} \cF(\cU)$, where the colimit runs over all strongly étale morphisms $\cU \to \cZ$ with a factorization $\cZ' \to \cU \to \cZ$.
 Every such $\cU \to \cZ$ splits.
 We conclude that $g'^*\cF(\cZ') = \cF(\cZ)$, which translates to the statement of the lemma.
\end{proof}

Suppose $\cZ \hookrightarrow \cX$ is a closed immersion of adic spaces.
We denote by $\lvert \cZ \rvert$ the image of~$\cZ$ in~$\cX$.
The pseudoadic space $(\cX, \lvert \cZ \rvert)$ should in a suitable sense be the same as~$\cZ$.
At the level of topoi this is true by the following lemma.

\begin{lemma} \label{topoi_pseudoadic}
 Let $\iota \colon \cZ \to \cX$ be a locally closed immersion of pseudoadic spaces inducing a homeomorphism $\lvert \cZ \rvert \to \lvert \cX \rvert$.
 Then for $\tau \in \{\set,t,\et\}$ the associated morphism of sites $\cZ_\tau \to \cX_\tau$ is an equivalence.
\end{lemma}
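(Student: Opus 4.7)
The plan is to show that the functor $\cX_\tau \to \cZ_\tau$ sending $\cU \to \cX$ to the base change $\cU \times_\cX \cZ \to \cZ$ (which gives rise to the morphism of sites in the statement) is an equivalence of categories. Compatibility with coverings will then be automatic because surjectivity of a family only depends on the underlying $|\cdot|$-subsets, and these are identified by $\iota$; fiber products are computed at the level of underlying adic spaces and then equipped with the induced $|\cdot|$-subset.

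Working locally on $\underline{\cX}$, I would factor $\iota$ as an open immersion of pseudoadic spaces followed by a closed immersion, and handle the two cases separately. In the open case---where $\underline{\cZ} \hookrightarrow \underline{\cX}$ is an open immersion of adic spaces and $|\cZ|$ coincides with $|\cX|$ via this open inclusion---the base change operation does not change the $|\cdot|$-subset of an étale $\cU \to \cX$ since $|\cU|$ already lies over $|\cX| = |\cZ|$. A quasi-inverse is given by simply composing $\cV \to \underline{\cZ}$ with the open immersion $\underline{\cZ} \hookrightarrow \underline{\cX}$, and the strongly étale/tame conditions transfer because the residue field extensions at points of $|\cV|$ are unchanged.

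For the closed case---$\underline{\cZ} \hookrightarrow \underline{\cX}$ a closed immersion with $|\cZ|$ identified as a subset of $|\cX|$---I would appeal to the étale lifting property for closed immersions in Huber's formalism (\cite{Hu96}, \S 1.6--1.7): any étale morphism $\cV \to \underline{\cZ}$ lifts, locally on $\underline{\cZ}$ and uniquely up to further shrinking, to an étale morphism $\widetilde{\cV} \to \cW$ for some open neighborhood $\cW$ of $\underline{\cZ}$ in $\underline{\cX}$. Since the étale site of the pseudoadic space $(\underline{\cX},|X|)$ only sees an arbitrarily small open neighborhood of $|X|$, distinct liftings become isomorphic after restriction and produce the same object of $\cX_\tau$. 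The residue field extensions at points above $|\cZ|$ are preserved by the lifting, so the strongly étale and tame conditions transfer correctly, yielding essential surjectivity and full faithfulness of the base change functor.

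The main obstacle is the closed case: one has to make the étale lifting statement precise enough that it assembles into a genuine inverse functor rather than merely an essential surjection up to non-canonical choices. The resolution lies in exploiting the fact that two objects of $\cX_\tau$ already coincide if they agree over a sufficiently small open neighborhood of $|\cX|$ in $\underline{\cX}$, so that the ambiguity in the lifting is absorbed into the equivalence relation defining objects of the pseudoadic étale site.
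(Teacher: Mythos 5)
The paper proves this lemma in one line: the \'etale case is Huber's Corollary~2.3.8 in \cite{Hu96}, and then tameness/strong \'etaleness is measured only at points of the $\lvert\cdot\rvert$-subsets, which are homeomorphic with trivial residue field extensions, so the conditions on either side coincide. You instead try to re-derive Huber's \'etale comparison from scratch, and this is where the gap lies.

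Your plan is to show that the base change functor $\cX_\tau \to \cZ_\tau$, $\cU \mapsto \cU\times_\cX\cZ$, is an \emph{equivalence of categories}. In the closed case this is false, and your own argument makes the problem visible. If $\widetilde\cV_1\to\cW_1$ and $\widetilde\cV_2\to\cW_2$ are two \'etale lifts of $\cV\to\underline\cZ$ to open neighborhoods of $\underline\cZ$, uniqueness of \'etale lifting only gives an isomorphism of the restrictions to some smaller common neighborhood $\cW$. The pseudoadic objects $(\widetilde\cV_1,\lvert\cV\rvert)$ and $(\widetilde\cV_2,\lvert\cV\rvert)$ are \emph{not} isomorphic in $\cX_\tau$ --- a morphism of pseudoadic spaces is, in particular, a morphism of the underlying adic spaces, and no morphism $\widetilde\cV_1\to\widetilde\cV_2$ need exist. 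The restriction $\widetilde\cV_\cW \hookrightarrow \widetilde\cV_i$ is an open immersion inducing a homeomorphism on $\lvert\cdot\rvert$-subsets, so it is simultaneously a monomorphism and a covering; that makes it invertible in the associated \emph{topos}, not in the underlying category. So ``distinct liftings produce the same object of $\cX_\tau$'' is wrong, and no genuine inverse functor exists.

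What you should prove is that the morphism of sites is an equivalence \emph{of topoi}, and the correct tool is the criterion of SGA4, III~4.1 (the one the paper uses in \cref{psi_equivalence}): base change is fully faithful, commutes with finite limits, preserves and reflects covers, and every object of $\cZ_\tau$ is \emph{covered} (not dominated by an isomorphism) by objects in the essential image. Your lifting argument then supplies exactly that cover. Alternatively, just cite \cite{Hu96}, Corollary~2.3.8 for the \'etale case, as the paper does, and argue --- correctly, as you do --- that the strongly \'etale and tame conditions on both sides match because residue field extensions over $\lvert\cZ\rvert$ and $\lvert\cX\rvert$ are identified.
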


\begin{proof}
 The statement for the étale site is true by \cite{Hu96}, Corollary~2.3.8.
 Since tameness and strong étaleness is only meassured over points of~$\lvert \cZ \rvert$ and~$\lvert \cX \rvert$, respectively, and $\lvert \cZ \rvert \to \lvert \cX \rvert$ is a homeomorphism with trivial residue field extensions, the tameness and unramifiedness conditions on either side match.
 Therefore, $\cZ_\tau \to \cX_\tau$ is an equivalence.
\end{proof}

From \cref{key_bc} we deduce the proper base change theorem for immersions in the strongly étale topology.

\begin{theorem} \label{bc_set}
 Let $f \colon \cX \to \cS$ be a proper morphism of discretely ringed pseudoadic spaces and $\iota \colon \cS' \to \cS$ a locally closed immersion.
 We set $\cX' := \cX \times_\cS \cS'$ thus obtaining a cartesian square
 \[
  \begin{tikzcd}
   \cX'	\ar[r,"\iota'"]	\ar[d,"f'"']	& \cX	\ar[d,"f"]	\\
   \cS'	\ar[r,"\iota"']					& \cS.
  \end{tikzcd}
 \]
 Then for any $i \in \N$ and any sheaf of abelian groups, the base change homomorphism
 \[
  \iota^* Rf_* \cF \longrightarrow Rf'_* \iota'^* \cF
 \]
 is an isomorphism.
\end{theorem}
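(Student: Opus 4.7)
The plan is to check that the base change map is a quasi-isomorphism on strongly étale stalks at every $s \in \cS'$. Since a locally closed immersion factors as a closed immersion into an open subspace, and pullback along an open immersion $U \hookrightarrow \cS$ is just restriction and trivially commutes with $Rf_*$ (strongly étale neighborhoods of a point inside $U$ coincide with those in the ambient $\cS$), I first reduce to the case where $\iota$ is a closed immersion. After this reduction, fix $s \in \cS'$ and pass to the strong henselization $\tilde\cS$ of $\cS$ at $\iota(s)$. Being a strongly local discretely ringed pseudoadic space with unique closed point $\iota(s)$, it has underlying adic space of the form $\Spa(R,R^+)$ for a strongly henselian local Huber pair $(R,R^+)$. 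Let $\tilde\cS' = \tilde\cS \times_\cS \cS'$ be the induced closed pseudoadic subspace (with $s$ its closed point), and set $\tilde\cX = \cX \times_\cS \tilde\cS$, $\tilde\cX' = \tilde\cX \times_{\tilde\cS} \tilde\cS'$. By \cref{pseudoadic_proper}, $\tilde\cX$ embeds as a closed pseudoadic subspace of $\Spa(X,R^+)$ for a proper scheme $X$ over $\Spec R$, and $\tilde\cX'$ sits inside it as the preimage of the closed subset $\tilde\cS' \subseteq \tilde\cS$.

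The two stalks to compare are
\[
 (\iota^*Rf_*\cF)_s \;\simeq\; R\Gamma(\tilde\cX_\set,\cF),\qquad (Rf'_*\iota'^*\cF)_s \;\simeq\; R\Gamma(\tilde\cX'_\set,\iota'^*\cF).
\]
I would compute each via \cref{pseudoadic_colim_models}, which expresses the strongly étale cohomology of any closed pseudoadic subspace of $\Spa(X,R^+)$ as a colimit over $X$-modifications $Y$ of $\Spec R^+$ of the étale cohomology of the image closed subscheme. For $\tilde\cX$ this is exactly the input to \cref{key_bc}: scheme-theoretic proper base change along the closed immersion $Y_c \hookrightarrow Y$ (where $c$ denotes the closed point of $\Spec R^+$) collapses the colimit to $R\Gamma(\cX_{s,\set},\cF|_{\cX_s})$. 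For $\tilde\cX'$, the image of $\tilde\cX'$ in any modification $Y$ is contained in $Y_c$ and equals the image of the fiber $\cX_s$; applying \cref{pseudoadic_colim_models} once more (now for the closed pseudoadic subspace $\tilde\cX' \subseteq \Spa(X,R^+)$) and again invoking scheme-theoretic proper base change, the colimit collapses to the same target $R\Gamma(\cX_{s,\set},\cF|_{\cX_s})$.

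Both stalks are thus canonically identified with $R\Gamma(\cX_{s,\set},\cF|_{\cX_s})$, and I expect the base change map in question to coincide with the identity under these identifications, because at each modification level the corresponding map is the scheme-theoretic base change along $Y_c \hookrightarrow Y$, whose compatibility with the identifications is built into the proof of \cref{key_bc}. The main obstacle will be the compatibility check: one must verify that the functoriality square produced by $\iota^*Rf_*\cF \to Rf'_*\iota'^*\cF$, when rewritten via \cref{pseudoadic_colim_models} at each finite level $Y$, coincides with the classical étale base change square for the closed immersion $Y_c \hookrightarrow Y$ applied to $\varphi_{Y,*}\cF$; this is a diagram chase exploiting the functoriality of the morphism of topoi $\varphi_{Z_Y}$ constructed in \cref{specialization_map_topoi} and the fact that formation of the image $Z_Y$ is compatible with the closed immersion $\tilde\cX' \hookrightarrow \tilde\cX$.
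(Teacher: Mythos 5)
Your overall strategy follows the paper's quite closely (stalks, strong henselization, \cref{pseudoadic_proper}, \cref{pseudoadic_colim_models}, scheme-level proper base change via \cref{key_bc}), but there is a concrete gap: the claim that ``the image of $\tilde\cX'$ in any modification $Y$ is contained in $Y_c$ and equals the image of the fiber $\cX_s$'' is not correct in general. After strongly henselizing, $\tilde\cS'$ is a strongly local pseudoadic space with closed point $s$, but $\lvert\tilde\cS'\rvert$ is a closed, convex, proconstructible subset of $\Spa(R,R^+)$ that typically contains proper generalizations of $s$, and these generalizations have centers in $\Spec R^+$ other than the closed point $c$ (think of rank-one specializations of the trivial valuation lying along a divisor through $c$ when $R^+$ has dimension $\geq 2$). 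Consequently $\tilde\cX'$ contains points whose center in a modification $Y$ lies outside $Y_c$, so $Z'_Y := \varphi_Y(\tilde\cX')$ strictly contains $Y_c$ in general, and the scheme-theoretic proper base change along $Y_c\hookrightarrow Y$ does not directly identify the two colimits you write down. Even granting $Z'_Y = Y_c$, you would still be comparing the pushforward from the topos of $\tilde\cX'$ with that from $\cX_s$, which are a priori different topoi over $Y_c$.

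The paper sidesteps exactly this by inserting an explicit reduction to the case $\cS' = s$: after passing to stalks it factors the closed immersion as $s \hookrightarrow \cS' \hookrightarrow \cS$ and applies the closed-point case twice (once for $\cX\to\cS$ over $s\hookrightarrow\cS$, once for $\cX'\to\cS'$ over $s\hookrightarrow\cS'$), together with \cref{topoi_pseudoadic} to pin down what ``the closed point'' means as a pseudoadic space. With $\lvert\cS'\rvert = \{s\}$ the center of every point of $\tilde\cX'$ in $\Spec R^+$ really is $c$, and the colimit computation you envision does collapse as claimed (this is precisely \cref{key_bc}, fed through \cref{closed_immersion_acyclic} and \cref{bc_closed_immersion} to pass between the pseudoadic $\cX$ and its ambient $\Spa(X,R^+)$). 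So your plan becomes correct if you add this two-out-of-three reduction up front; as written, the step attempting to treat general $\tilde\cS'$ directly via modifications of $\Spec R^+$ does not go through. The compatibility check you flag as the main obstacle is real but secondary — the paper handles it via the careful topos-level decomposition in the proof of \cref{key_bc}; the missing reduction to the closed point is the more essential omission.
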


\begin{proof}
 We can check on stalks that the base change homomorphism is an isomorphism.
 This reduces us to the case where~$\cS$ and~$\cS'$ are strongly henselian and $\cS' \to \cS$ is local.
 Since $\cS' \to \cS$ is also a locally closed immersion, it is a closed immersion in our situation.
 It suffices to prove the theorem in case~$\cS'$ is the closed point~$s$ of~$\cS$.
 The general case then follows by considering the composition
 \[
  s \hookrightarrow \cS' \hookrightarrow \cS
 \]
 and using the result for $s \hookrightarrow \cS'$ and $s \hookrightarrow \cS$.
 Here we need to be precise and specify what we mean by the closed point.
 The smallest possible closed subspace of~$\cS$ containing the closed point is $(\Spa(k(s),k(s)^+),s)$.
 We can certainly work with this space.
 Morally, it should coincide with $(\underline{\cS},s)$.
 By \cref{topoi_pseudoadic} the closed immersion $(\Spa(k(s),k(s)^+,s) \to (\underline{\cS},s)$ induces an equivalence of strongly étale topoi.
 We conclude that it suffices to prove the theorem for $\cS' = (\underline{\cS},s)$.
 In the following we will just write~$s$ for this space.
 
 The pseudoadic space~$\cS$ is of the form $(\Spa(R,R^+),\lvert \cS \rvert)$ for a strongly henselian Huber pair $(R,R^+)$ and a closed subset $\lvert \cS \rvert \subseteq \Spa(R,R^+)$.
 Using \cref{pseudoadic_proper} we find a closed embedding of~$\cX$ into a adic space of the form $\Spa(X,R^+)$ for a proper scheme~$X$ over~$R^+$.
 Denoting it by
 \[
  h \colon \cX \hookrightarrow \Spa(X,R^+)
 \]
 we obtain proper base change for $\cX \to \cS$ and the sheaf~$\cF$ on~$\cX_\set$ by applying proper base change for $\Spa(X,R^+) \to \Spa(R,R^+)$ to the sheaf $h_*\cF$ and using that~$h_*$ is exact by \cref{closed_immersion_acyclic} and satisfies base change by \cref{bc_closed_immersion}.
 But for the square
 \[
  \begin{tikzcd}
   \Spa(X,R^+)_s	\ar[r]	\ar[d]	& \Spa(X,R^+)	\ar[d]	\\
   s				\ar[r]			& \Spa(R,R^+)
  \end{tikzcd}
 \]
 the base change homomorphism is an isomorphism by \cref{key_bc}.
\end{proof}

Our next goal is to deduce from \cref{bc_set} the corresponding proper base change statement for the tame topology.

\begin{lemma} \label{bc_t_set}
 Let~$\cX$ be a pseudoadic space of specialization characteristic $\ch^+(\cX) = p > 0$.
 For a locally closed immersion $f \colon \cY \to \cX$ of pseudoadic spaces we consider the diagram
 \[
  \begin{tikzcd}
   \cY_t	\ar[r,"f_t"]		\ar[d,"\nu_\cY"']	& \cX_t	\ar[d,"\nu_\cX"]	\\
   \cY_\set	\ar[r,"f_\set"']						& \cX_\set.
  \end{tikzcd}
 \]
 For any $p$-torsion sheaf~$\cF$ on~$\cX_t$ the base change homomorphism
 \[
  f_\set^* R\nu_{\cX,*} \cF \longrightarrow R\nu_{\cY,*} f_t^*\cF
 \]
 is an isomorphism.
\end{lemma}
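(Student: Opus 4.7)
The plan is to check that the base change homomorphism is an isomorphism on stalks of the strongly étale topology on $\cY$. Fix $y \in \cY$ with image $x = f(y) \in \cX$; since $f$ is a locally closed immersion we have $k(y) = k(x)$ and $k(y)^+ = k(x)^+$, so the strong henselization $\cY^\sh_y$ embeds as a locally closed pseudoadic subspace of $\cX^\sh_x$ preserving the closed-point residue datum.

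First I would rewrite both stalks. The stalk of $f_\set^* R^i\nu_{\cX,*}\cF$ at $y$ is $H^i(\cX^\sh_{x,t}, \cF|_{\cX^\sh_x})$, and the stalk of $R^i\nu_{\cY,*}(f_t^*\cF)$ at $y$ is $H^i(\cY^\sh_{y,t}, f_t^*\cF|_{\cY^\sh_y})$. Invoking the key comparison singled out in the introduction (tame and strongly étale cohomology coincide for $p$-torsion sheaves when $p = \ch^+$), and using that $f_t^*\cF$ is again $p$-torsion, these tame cohomology groups agree with the strongly étale cohomology of the respective strongly henselian spaces. Since strongly étale cohomology on a strongly henselian pseudoadic space is concentrated in degree zero, both sides vanish for $i>0$, and the problem reduces to showing that $f_\set^*\nu_{\cX,*}\cF \to \nu_{\cY,*}f_t^*\cF$ is an isomorphism of strongly étale sheaves.

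Next I would prove this degree-zero statement by a direct cofinality argument. The right-hand stalk at $y$ unwinds as $\colim_{(W,V)}\cF(W)$, where $W \to \cX$ is a tame neighborhood of $x$ and $V \to W \times_\cX \cY$ is a strongly étale neighborhood of $y$. The existence of such a $V$ forces the residue extension of $W$ at the point over $x$ (which coincides with the one over $y$, because $k(y) = k(x)$) to be unramified, so $W$ is already strongly étale over $\cX$ on an open neighborhood of that point; shrinking $W$ turns it into a strongly étale neighborhood of $x$ in $\cX$. Hence strongly étale neighborhoods of $x$ are cofinal in the indexing system, and the colimit agrees with the left-hand stalk $\colim_{U \in \cX_{\set,x}}\cF(U) = (\nu_{\cX,*}\cF)_x$.

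The principal obstacle is the tame versus strongly étale cohomological comparison for $p$-torsion sheaves on strongly henselian pseudoadic spaces, which is the substantive input developed separately in the paper. Once it is granted, the geometric content of $f$ being a locally closed immersion, namely the preservation of residue fields and valuation rings, handles the remaining cofinality step essentially formally.
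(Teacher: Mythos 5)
Your strategy of passing to stalks is the same opening move as the paper's proof, and your identification of the two stalks as tame sections over the strong henselizations $\cX^\sh_{\bar x}$ and $\cY^\sh_{\bar y}$ is the right setup. The divergence comes in how the degree-zero comparison is carried out, and there your argument has a genuine gap.

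The cofinality step rests on the claim that a tame neighborhood $W \to \cX$ whose distinguished point over $x$ has unramified residue extension is strongly étale \emph{on an open neighborhood} of that point, so that one may shrink $W$ to a strongly étale neighborhood of $x$. Nothing in the paper establishes that the strongly étale locus of a tame morphism is open near a point where it is unramified, and this is not an elementary consequence of the definitions: strong étaleness is a pointwise condition on residue valued-field extensions, and these vary along horizontal and vertical specializations of valuations in ways that do not obviously preserve unramifiedness in an open neighborhood. Without this openness statement your asserted cofinality of strongly étale neighborhoods $\{U\}$ inside the double-index system $\{(W,V)\}$ does not follow. There is a second, more technical worry hidden in the unwinding $\colim_V (f_t^*\cF)(V) = \colim_{(W,V)}\cF(W)$: the inner colimit computes the \emph{presheaf} pullback, while the left side uses the $\cY_t$-sheafification $f_t^*\cF$; the strongly étale colimit over $V$ is not a point of the site $\cY_t$, so stalk-commutes-with-sheafification does not apply directly, and the claimed equality of the two needs an argument.

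The paper's own proof sidesteps both problems. After reducing to $\cX$ and $\cY$ strongly local with $f$ a closed immersion (so $k(y)=k(x)$ and $k(y)^+=k(x)^+$), it uses the structure theory of the tame topos of a strongly local pseudoadic space: that topos is equivalent to $G$-sets for $G$ the tame Galois group of $(k(x),k(x)^+)$. Under this equivalence, $\cF(\cX)$ is the invariants $\cF_{\bar x}^G$, and $(f_t^*\cF)(\cY)$ is the invariants of the pullback $G$-set, which is the same $\cF_{\bar x}^G$ because $\cY$ has the same closed-point residue valued field and hence the same $G$. Both stalks are therefore identified at once, in all degrees (the vanishing of $R^i\nu_*$ for $i>0$ is cited as an established fact rather than re-derived), with no openness or cofinality input needed. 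If you want to salvage your route, you would need to prove the openness of the strongly étale locus of a tame morphism at an unramified point, or more precisely the cofinality of strongly étale neighborhoods among tame neighborhoods with an unramified distinguished point, and address the sheafification issue; as written these are missing steps.
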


\begin{remark}
 Note that we know from \cite{HueAd}, Proposition~8.5 that $R\nu_{\cX,*} \cF = \nu_{\cX,*} \cF$ and similarly for~$\nu_\cY$.
\end{remark}

\begin{proof}
 Looking at stalks we reduce to the situation where~$\cX$ and~$\cY$ are strongly local and $f \colon \cY \to \cX$ is a closed immersion.
 Now we need to show that the restriction $\cF(\cX) \to f^*\cF(\cY)$ is an isomorphism.
 But both sides identify with the invariants~$\cF_{\bar{x}}^G$, where~$\bar{x}$ is a geometric point above the closed point $x \in \cX$ and~$G$ is the Galois group of the maximal tame extension of $k(x)$ (inside $k(\bar{x})$).
\end{proof}

\begin{corollary} \label{bc_tame_p}
 We consider the cartesian square
 \[
  \begin{tikzcd}
   \cX'	\ar[r,"\iota'"]	\ar[d,"f'"']	& \cX	\ar[d,"f"]	\\
   \cS'	\ar[r,"\iota"']					& \cS
  \end{tikzcd}
 \]
 of discretely ringed adic spaces where $f \colon \cX \to \cS$ is proper and $\iota \colon \cS' \to \cS$ is a locally closed immersion.
 Let $p > 0$ be the specialization characteristic $\ch^+(\cS)$.
 Then for any $p$-torsion sheaf~$\cF$ on~$\cX_t$ the base change homomorphism
 \[
  \iota^* Rf_* \cF \longrightarrow R f'_* \iota'^* \cF
 \]
 is an isomorphism for all $i \in \N$.
\end{corollary}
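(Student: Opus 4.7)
The plan is to reduce \cref{bc_tame_p} to the strongly étale proper base change theorem \cref{bc_set} by pushing the tame base change morphism down via the comparison morphisms of sites $\nu_\cY \colon \cY_t \to \cY_\set$ for $\cY \in \{\cS,\cS',\cX,\cX'\}$. The key inputs are \cref{bc_t_set} (tame vs.~strongly étale base change along a locally closed immersion for $p$-torsion sheaves) and the exactness of $\nu_{\cY,*}$ from \cite{HueAd}, Proposition~8.5, which in particular guarantees $R\nu_{\cY,*} = \nu_{\cY,*}$ on abelian sheaves and hence on bounded-below complexes.

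First I set up compatibilities. Because $\nu_\cS \circ f_t = f_\set \circ \nu_\cX$ as morphisms of sites, exactness of $\nu_{\cX,*}$ gives a canonical isomorphism $R\nu_{\cS,*}Rf_{t,*}\cF \cong Rf_{\set,*}\nu_{\cX,*}\cF$ on $\cS_\set$, and similarly on the primed side. Combining this with \cref{bc_t_set} applied to the locally closed immersions $\iota$ and $\iota'$, I obtain natural identifications
\[
 \nu_{\cS',*}\,\iota_t^*\,Rf_{t,*}\cF \;\cong\; \iota_\set^*\,Rf_{\set,*}\,\nu_{\cX,*}\cF, \qquad
 \nu_{\cS',*}\,Rf'_{t,*}\,\iota'^*_t\cF \;\cong\; Rf'_{\set,*}\,\iota'^*_\set\,\nu_{\cX,*}\cF,
\]
and a diagram chase shows that under these identifications, $\nu_{\cS',*}$ of the tame base change morphism agrees with the strongly étale base change morphism associated to the $p$-torsion sheaf $\nu_{\cX,*}\cF$ on $\cX_\set$. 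By \cref{bc_set}, the latter is an isomorphism.

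It remains to promote this to an isomorphism at the tame level. I propose to test the tame base change morphism stalkwise at tame geometric points $\bar{s}$ of $\cS'$. At such a point, the relevant pseudoadic space is tamely strongly henselian, and by the argument used in the proof of \cref{bc_t_set}, a $p$-torsion tame sheaf on it has global sections equal to the $G$-invariants of its stalk at $\bar{s}$, where $G = \Gal(k(s)^t/k(s))$ has order prime to $p$. Thus taking $G$-invariants is exact on $p$-torsion stalks, and the fact that the induced map on strongly étale global sections is an isomorphism forces the tame stalk morphism to be an isomorphism as well.

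The main obstacle is precisely this conservativity step: while $\nu_{\cS',*}$ is exact, it is not a priori conservative on all abelian sheaves, so one cannot simply cancel $\nu_{\cS',*}$ to deduce the tame statement from its pushforward. The $p$-torsion hypothesis is essential here because it ensures that the tame Galois quotient acts with order prime to $p$, making the $G$-invariants functor exact on the relevant stalks; this is what ultimately lets the strongly étale comparison propagate to a sheaf-theoretic isomorphism on $\cS'_t$.
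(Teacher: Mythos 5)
Your overall plan — using the comparison morphisms $\nu_\cY \colon \cY_t \to \cY_\set$, the exactness $R\nu_{\cY,*} = \nu_{\cY,*}$ from \cite{HueAd}, Prop.~8.5, the lemma \cref{bc_t_set}, and the strongly étale theorem \cref{bc_set} — matches the paper's toolkit exactly, and the middle "diagram chase" identifications you write down are correct. The gap is in the final step, and you yourself flagged it: you try to deduce the tame-level isomorphism from the fact that $\nu_{\cS',*}$ of the base change map is an isomorphism, and you attempt to close the gap by arguing that exactness of $(-)^G$ on $p$-torsion modules (for $G$ the tame inertia, of pro-order prime to $p$) gives conservativity of $\nu_{\cS',*}$. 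That inference does not hold: exactness of $G$-invariants is not the same as the statement that a morphism of $p$-torsion $G$-modules $M \to N$ which is an isomorphism on $G$-invariants must itself be an isomorphism. For instance, with $G = \Z/\ell$ and $\ell \neq p$, the averaging idempotent splits any such module as $M = M^G \oplus M_0$ functorially, and a morphism that is the identity on the invariant summands can be arbitrary (in particular non-injective or non-surjective) on the complements $M_0 \to N_0$. So $\nu_{\cS',*}$ is genuinely not conservative, even on $p$-torsion sheaves, and the "promotion" step as written fails.

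The paper sidesteps this entirely by performing the stalk reduction in the \emph{tame} topology at the very start: take stalks of the base change morphism at a tame geometric point of $\cS'$, so that $\cS$ and $\cS'$ become tamely local. At that point one is comparing cohomology \emph{groups}, not sheaves, namely one needs $H^i(\cX_t,\cF) \cong H^i(\cX'_t,\iota'^*\cF)$, and the chain
\[
H^i(\cX'_t,\iota'^*\cF) \cong H^i(\cX'_\set,\nu_{\cX',*}\iota'^*\cF) \cong H^i(\cX'_\set,\iota'^*\nu_{\cX,*}\cF) \cong H^i(\cX_\set,\nu_{\cX,*}\cF) \cong H^i(\cX_t,\cF)
\]
(using $R\nu_* = \nu_*$, then \cref{bc_t_set}, then \cref{bc_set}, then $R\nu_* = \nu_*$ again) closes the argument with no conservativity claim needed. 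If you move your stalk reduction to the front — before pushing anything through $\nu_{\cS',*}$ — your calculation becomes essentially the paper's proof and the gap disappears. Also, in your final paragraph the group $G = \Gal(k(s)^t/k(s))$ is trivial at a tame geometric point; what you want is the tame inertia $\Gal(k(s)^t/k(s)^\sh)$, but again this group drops out once the reduction is done on the tame side.
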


\begin{proof}
 Looking at stalks we reduce to~$\cS$ and~$\cS'$ being tamely local.
 We now need to show that
 \[
  H^i(\cX,\cF) \cong H^i(\cX',\iota'^*\cF).
 \]
 We consider the morphism of sites
 \[
  \nu_\cX \colon \cX_t \longrightarrow \cX_\set
 \]
 and similarly for~$\cS$, $\cS'$, and~$\cX'$.
 By \cite{HueAd}, Proposition~8.5 we know
 \[
  R\nu_{\cX,*} \cF = \nu_{\cX,*} \cF
 \]
 for every $p$-torsion sheaf~$\cF$ on~$\cX_t$.
 Using \cref{bc_t_set} we get a natural identification
 \begin{align*}
  H^i(\cX'_t,\iota'^*\cF)	& \cong H^i(\cX'_\set,\nu_{\cX',*}\iota'^*\cF)	\\
							& \cong H^i(\cX'_\set,\iota'^*\nu_{\cX,*}\cF).
 \end{align*}
 By \cref{bc_set} we get
 \begin{align*}
  H^i(\cX'_\set,\iota'^*\nu_{\cX,*}\cF)	& \cong H^i(\cX_\set,\nu_{\cX,*}\cF)	\\
										& H^i(\cX_t,\cF)
 \end{align*}
 and we are done.
\end{proof}

\begin{remark}
 In \cref{bc_tame_p} we only talked about $p$-torsion sheaves.
 One might think that proper base change for torsion sheaves whose torsion is invertible on the adic space~$\cX$ should already be known.
 This is not completely the case.
 It is true that the comparison of tame with étale cohomology (\cite{HueAd}, Proposition~8.2) reduces us to proving proper base change for the étale topology.
 However, the étale proper base change theorem has only been established for \emph{analytic} adic spaces (see \cite{Hu96}, Theorem~4.4.1) and here we are dealing with the opposite extreme: discretely ringed adic spaces.
 One might hope to solve the problem by recurring to the proper base change theorem for schemes.
 But then we cannot treat all torsion sheaves but only those that are algebraic, meaning that they come from the algebraic tame site.
 We will address this task in subsequent work.
\end{remark}

\bibliographystyle{../meinStil}
\bibliography{../citations}

\end{document}